\definecolor{paleblue}{rgb}{0.7, 0.7, 1.0}
\definecolor{palegreen}{rgb}{0.7, 1,0.7}
  \theoremstyle{plain}
\newtheorem{theorem}{Theorem}
\newtheorem{proposition}{Proposition}[section]
\newtheorem{corollary}[proposition]{Corollary}
  \theoremstyle{remark}
\newtheorem{remark}[proposition]{Remark}
  \theoremstyle{definition}
\newtheorem{definition}[proposition]{Definition}
\newtheorem{lemma}[proposition]{Lemma}
\newtheorem{example}[proposition]{Example}
\newcommand{\sh}{\underset{\sigma}{\shuffle}}
\newcommand{\Sh}{\underset{-\sigma}{\shuffle}}
\newcommand{\csh}{\underset{\sigma}{\cshuffle}}
\newcommand{\Csh}{\underset{-\sigma}{\cshuffle}}
\newcommand{\ShAlg}{\operatorname{Sh}}
\newcommand{\ShCoalg}{\operatorname{\overline{Sh}}}
\newcommand{\ssigma}{\boldsymbol{\sigma}}
\newcommand{\wlhd}{\widetilde{\lhd}}
\newcommand{\II}{\mathbf{I}}
\newcommand{\ZZ}{\mathbb{Z}}
\newcommand{\C}{\mathcal C}
\newcommand{\Hom}{\operatorname{Hom}}
\newcommand{\End}{\operatorname{End}}
\newcommand{\Ob}{\operatorname{Ob}}
\newcommand{\Set}{\mathbf{Set}}
\renewcommand{\k}{\Bbbk}
\newcommand{\Vect}{\mathbf{Vect}_\k}
\newcommand{\VectGrad}{{_\ZZ}\!\mathbf{Vect}_\k}
\newcommand{\kk}{R}
\newcommand{\kVect}{\mathbf{Mod}_\kk}
\newcommand{\Alg}{\mathbf{UAlg}}
\newcommand{\Lei}{\mathbf{ULei}}
\newcommand{\coAlg}{\mathbf{coUAlg}}
\newcommand{\coLei}{\mathbf{coULei}}
\newcommand{\Br}{\mathbf{Br}}
\newcommand{\Id}{\operatorname{Id}}
\newcommand{\Ker}{\operatorname{Ker}}
\newcommand{\op}{\operatorname{op}}
\newcommand{\opp}{\otimes^{\op}}
\newcommand{\oppp}{\op,\opp}
\newcommand{\one}{\mathbf{1}}
\newcommand{\wV}{\widetilde{V}}
\newcommand{\ov}{\overline{v}}
\renewcommand{\le}{\leqslant}
\renewcommand{\ge}{\geqslant}
\newcommand{\ii}{\ding{250}}
\begin{document}

\title{Homologies of Algebraic Structures via Braidings and Quantum Shuffles}

\author{Victoria Lebed \\ \itshape lebed@math.jussieu.fr}

\maketitle

\begin{abstract}
\footnotesize
In this paper we construct ``structural'' pre-braidings characterizing different algebraic structures: a rack, an associative algebra, a Leibniz algebra and their representations. Some of these pre-braidings seem original. On the other hand, we propose a general homology theory for pre-braided vector spaces and braided modules, based on the quantum co-shuffle comultiplication. Applied to the structural pre-braidings above, it gives a generalization and a unification of many known homology theories. All the constructions are categorified, resulting in particular in their super- and co-versions. Loday's hyper-boundaries, as well as certain homology operations are efficiently treated using the ``shuffle'' tools. 
\end{abstract}

{\bf Keywords:}  pre-braiding; braided (co)algebra; braided homology; character; braided module; quantum shuffle algebra; Koszul complex; rack homology; Hochschild homology; Leibniz algebra; pre-braided object.

{\bf Mathematics Subject Classification 2010:} 18D10, 20F36, 18G60, 55N35, 16E40, 17A32, 17D99, 18G30, 05E99.

\tableofcontents

\section{Introduction}

The aim of this paper is to develop a unifying framework for (co)homologies of algebraic structures. Our starting point is the following fundamental procedure:

\begin{center}
\fbox{algebraic structure $\leadsto$ chain complex.}
   \captionof{figure}{Homology of algebraic structures}\label{pic:HomAlgStr}
\end{center}

The step $\leadsto$ is far from being canonical, and can be dictated by motivations of very different nature: structure deformations and obstructions, classification questions, topological applications etc. Here we propose to regard it from a purely combinatorial viewpoint, in the spirit of operad theory. The complexes one associates in practice to basic algebraic structures on a vector space $V$ usually have the same flavor: they are all \textbf{signed} sums $d_n:V^{\otimes n} \rightarrow V^{\otimes (n-1)}$ of \textbf{terms of the same nature} $d_{n;i},$ one for each component $1,2,\ldots,n$ of $V^{\otimes n}.$

The examples we have in mind are
\begin{enumerate}
\item \textit{Koszul complex} for vector spaces,
\item \textit{bar and Hochschild complexes} for associative algebras,
\item \textit{Chevalley-Eilenberg complex} for Lie algebras,
\item \textit{rack and  shelf (or one-term distributive) complexes} for self-distributive (= SD) structures.
\end{enumerate}

Verifying that one has indeed a differential, i.e. $d_{n-1} \circ d_n=0,$ can be reduced to checking some \textbf{local} algebraic identities (which mysteriously coincide with the defining properties for our algebraic structure!) coupled with a sign manipulation, no less mysterious. 

For many algebraic structures, their chain complexes can be refined by introducing a (weakly) (pre)(bi)\textbf{simplicial structure} on $T(V)$ (see section \ref{sec:simplicial} or J.-L.Loday's book \cite{Cyclic} for the simplicial vocabulary). Moreover, the degree $-1$ differentials can be generalized to \textbf{Loday's hyper-boundaries} of arbitrary degree (see the definitions from section \ref{sec:hyper}, or exercise E.2.2.7  in \cite{Cyclic}, from which this notion takes inspiration). Some \textbf{homology operations}, similar for different algebraic structures, are also to be mentioned here. Such common features are presented, for the example of associative and SD structures, in J.Przytycki's paper \cite{Prz1}.

In this work, we propose to interpret and partially explain these parallels (typed in bold letters above) and mysteries by adding a new step to the scheme in figure \ref{pic:HomAlgStr}:

\begin{center}
\fbox{{\large algebraic structure} 
\fcolorbox{brown}{palegreen}{ ${\overset{\text{ case by case}}{\leadsto}}$ \textbf{pre-braiding}
${\overset{\text{ theorem \ref{thm:cuts}}}{\leadsto}}$ }
  {\large chain complex.}}
   \captionof{figure}{Homology of algebraic structures via pre-braidings}\label{pic:HomAlgStrBr}
\end{center}

After a short reminder on braided structures in section \ref{sec:braid}, we proceed to describing in detail the right part of this new scheme. More precisely, given a vector space endowed with a \textit{{pre-braiding}} $\sigma: V  \otimes V \rightarrow V \otimes V$ satisfying the \textit{{Yang-Baxter equation}} (=YBE)
$$ (\sigma\otimes\Id_V)\circ (\Id_V\otimes\sigma)\circ (\sigma\otimes\Id_V)=(\Id_V\otimes\sigma)\circ (\sigma\otimes\Id_V)\circ (\Id_V\otimes\sigma) \in \End(V \otimes V  \otimes V),$$
we associate in theorem \ref{thm:cuts} a bidifferential on $T(V)$ to any couple of \textit{{braided characters}} (= elements of $V^*$ ``respecting'' $\sigma$) $\epsilon$ and $\zeta,$ using {\textit{quantum co-shuffle comultiplication}} techniques (cf. M.Rosso's pioneer papers \cite{Rosso1Short},\cite{Rosso2}). We call such {(bi)differentials} \textit{braided}. 

In theorem \ref{thm:BraidedSimplHom} we refine these braided bidifferential structures: we show that they come from a pre-bisimplicial structure on $T(V),$ completed to a weakly bisimplicial one if $V$ is moreover endowed with a coassociative $\sigma$-cocommutative \textit{{comultiplication}} $\Delta$ compatible with the pre-braiding $\sigma.$ This is done using the graphical calculus (in the spirit of J.C.Baez (\cite{Baez}), S.Majid (\cite{Majid}) and other authors), appearing naturally due to our use of ``braided'' techniques. Here are for example the components of the weakly bisimplicial structure from the theorem (all diagrams are to be read from bottom to top here):

\begin{center}
\begin{tikzpicture}[scale=0.3]
 \draw (0,1) --(0,-1.6);
 \draw (0,-2.4) --  (0,-7);
 \node at (-4,-3) {$d_{n;i}=$};
 \draw[thick,red,rounded corners] (-1,0) -- (-1,-1) -- (4,-6) -- (4,-7);
 \fill[violet] (-1,0) circle (0.1);
 \node at (-1,0)  [above, violet] {$\epsilon$};
 \draw (3,1) -- (3,-4.6);
 \draw (3,-5.4) -- (3,-7);
 \draw (5,1) -- (5,-7);
 \draw (8,1) -- (8,-7);
 \node at (0.3,-2.3) [ above right] {$\sigma$};
 \node at (3.3,-5.3) [above right] {$\sigma$};
 \node at (0,-7) [below] {$\scriptstyle 1$};
 \node at (1.5,-7) [below] {$\scriptstyle\ldots$};
 \node at (4,-7) [below] {$\scriptstyle i$};
 \node at (6.5,-7) [below] {$\scriptstyle \ldots$};
 \node at (8,-7) [below] {$\scriptstyle{n}$}; 
\end{tikzpicture}
\begin{tikzpicture}[scale=0.3]
 \draw (0,1) -- (0,-7);
 \node at (-9,-3) {};
 \node at (-4,-3) {$d'_{n;i}=$};
 \draw[thick,red,rounded corners] (9,0) -- (9,-1) -- (8.3,-1.7);
 \draw[thick,red,rounded corners] (7.7,-2.3) -- (5.3,-4.7);
 \draw[thick,red,rounded corners] (4.7,-5.3)--(4,-6) -- (4,-7);
 \fill[violet] (9,0) circle (0.1);
 \node at (9,0) [above,violet] {$\zeta$};
 \draw (3,1) -- (3,-7);
 \draw (5,1) -- (5,-7);
 \draw (8,1) -- (8,-7);
 \node at (5,-5) [right] {$\sigma$};
 \node at (8,-2) [right] {$\sigma$};
 \node at (0,-7) [below] {$\scriptstyle 1$};
 \node at (1.5,-7) [below] {$\scriptstyle \ldots$};
 \node at (4,-7) [below] {$\scriptstyle i$};
 \node at (6.5,-7) [below] {$\scriptstyle\ldots$};
 \node at (8,-7) [below] {$\scriptstyle{n}$}; 
\end{tikzpicture}
\end{center}
\begin{center}
\begin{tikzpicture}[scale=0.3]
 \node at (-7,1) {};
 \node at (-4,1) {$s_{n;i}=$};
 \draw (-1,0) -- (-1,2);
 \draw (0,0) -- (0,2);
 \draw (4,0) -- (4,2);
 \draw (3,0) -- (3,2);
 \draw[thick,red,rounded corners] (1.5,0) -- (1.5,1) -- (1,1.5) -- (1,2);
 \draw[thick,red,rounded corners] (1.5,1) -- (2,1.5) -- (2,2);
 \fill[teal] (1.5,1) circle (0.2);
 \node at (1.5,1) [right,teal] {$\scriptstyle \mathbf{\Delta}$};
 \node at (1.5,0) [below] {$\scriptstyle i$};
 \node at (5,0) {.};
\end{tikzpicture}
   \captionof{figure}{Simplicial structure for braided homology}\label{pic:SimplIntro}
\end{center}
See table \ref{tab:2Approches} for a comparison of the quantum co-shuffle and the graphical approaches to braided differentials. Braided differentials are generalized to hyper-boundaries in section \ref{sec:hyper}, and some homology operations for them are studied in section \ref{sec:HomOper}.

Note that we never demand $\sigma$ to be {invertible}, emphasizing it in the term \underline{pre-}braiding. Rare in literature, this elementary generalization of the notion of braiding allows interesting examples.

\medskip
Armed with this general homology theory for pre-braided vector spaces, we are now interested in the left part of figure \ref{pic:HomAlgStrBr}. Unfortunately we do not know any systematic way of associating a pre-braiding to an algebraic structure. So we do it by hand in section \ref{sec:examples} for each of the four structures in the list above. In each case, the pre-braiding $\sigma$ we propose \underline{encodes} surprisingly well the structure in question, in the sense that
\begin{itemize}
\item YBE for $\sigma$ is equivalent to the defining relation for the structure (e.g. the associativity for an algebra), under some mild assumptions concerning units;
\item the invertibility condition for $\sigma,$ when this makes sense, translates important algebraic properties (e.g. the rack condition);
\item braided morphisms (= those preserving $\sigma$) are precisely algebra morphisms (= those preserving the underlying structure);
\item braided characters  for $\sigma$ include the usual characters for the structure (e.g. algebra characters);
\item the comultiplication necessary for constructing the degeneracies turns out to be quite characteristic of the structures.
\end{itemize}
Thus the left part of the scheme in figure \ref{pic:HomAlgStrBr} can be informally stated in a stronger way:
\begin{center}
\fbox{``algebraic structure $=$ pre-braiding''.}
\end{center}

\medskip
Besides its unifying character, our braided homology theory has the following advantages:
\begin{enumerate}
\item We produce two compatible differentials -- a left and a right one -- for each braided character, these differentials often being compatible even for different characters. Their combinations thus give a {family of homology theories} for the same algebraic structure. For instance, for SD structures we recover
\begin{enumerate}
\item usual shelf (\cite{PrzSikora}, \cite{Prz1}), rack (\cite{RackHom}) and  quandle (\cite{QuandleHom}) homology theories;
\item the partial derivatives from \cite{Prz2};
\item and the twisted rack homology from \cite{TwistedQuandle}.
\end{enumerate}
\item The technical sign manipulation (especially heavy for the Chevalley-Eilenberg complex) is controlled either by using the negative pre-braiding $-\sigma$ in the quantum co-shuffle comultiplication, or by counting the number of intersections in the graphical interpretation.
\item The identities $d_{n-1} \circ d_n=0,$ which are of ``global'' nature, are replaced with the YBE for the corresponding pre-braiding, which is ``local'' and thus easier to verify.
\item The decomposition $d_n=\sum_{i=1}^n (-1)^{i-1} d_{n;i}$ becomes natural when one reasons in terms of braids and strands.
\item So do some homology operations -- for instance, the generalizations of the homology operations for shelves, defined by M.Niebrzydowski and J.Przytycki in \cite{Prz2}.
\item Subscript chasing (in the relations defining simplicial structures for instance) is substituted with the more transparent ``strand chasing''.
\item J.-L.Loday's hyper-boundaries of degree $-i$ arise naturally in the co-shuffle interpretation: one simply replaces the $V^{\otimes n} \rightarrow V^{\otimes 1} \otimes V^{\otimes (n-1)}$ component of the quantum co-shuffle coproduct with the $V^{\otimes n} \rightarrow V^{\otimes i} \otimes V^{\otimes (n-i)}$ component.
\end{enumerate}

We thus recover all the common features of different algebraic homology theories mentioned (in bold letters) in the beginning of this introduction. Moreover, we obtain a simplified and conceptual way of proving that $d^2=0,$ as well as of ``guessing'' the right boundary map. As an illustration, note that the ``braided'' considerations naturally lead one to lifting the Chevalley-Eilenberg complex from the external to the tensor algebra of a Lie algebra, reinterpreting the work of J.-L.Loday on \textit{{Leibniz algebras}} (\cite{Cyclic}) from a more conceptual viewpoint. We also recover for Leibniz algebras the braiding studied in the Lie case by A.Crans (\cite{Crans}, \cite{CatSelfDistr}). 

\medskip

In section \ref{sec:coeffs} we introduce the notion of a \textit{{braided module}} over a pre-braided space $(V, \sigma),$ unifying, quite unexpectedly, the familiar notions of modules over different structures. Concretely, it is a space $M$ equipped with a linear map $\rho:M\otimes V \rightarrow M$ satisfying
$$ \rho \circ (\rho \otimes \Id_V)=\rho \circ (\rho \otimes \Id_V)\circ (\Id_M\otimes \sigma):M\otimes V\otimes V\rightarrow M.$$
These braided modules are natural candidates for coefficients in the braided complexes, leading us to \textit{{braided homologies with coefficients}}, recovering for example the Hochschild and Chevalley-Eilenberg homologies.

\medskip
 
 We now give a table of braided structures encoding the algebraic structures mentioned above, and of the familiar complexes recovered as particular cases of our braided complexes.
 
 \begin{center}
\begin{tabular}{|c|c|c|c|c|}
\hline 
\textbf{structure} & \textbf{pre-braiding} &\textbf{invertibility} & \textbf{$\Delta$} & \textbf{complexes} \\
\hline 
vector space $V$ & flip $\tau: v \otimes w \mapsto w \otimes v$ & $\tau^{-1} = \tau$ & -- & Koszul \\
\hline 
unital associative  & $\sigma_{\mu}:v\otimes w  \mapsto $ & no inverse& $\Delta(v)= \one \otimes v$  & bar,\\
algebra $(V,\cdot,\one)$ & $ \one \otimes vw$ &in general & &Hochschild\\ 
\hline 
unital Leibniz & $\sigma_{[,]}: v\otimes w  \mapsto $ & & $\Delta(v)= $ & Leibniz,\\
 algebra $(V,[,],\one)$ & $w \otimes v$ &$\exists\;\sigma_{[,]}^{-1}$ & $v\otimes \one + \one \otimes v,$ & Chevalley-\\
 & $ + \one \otimes [v, w]$ && $\Delta(\one)= \one\otimes \one$ & Eilenberg\\
 \hline 
shelf $(S,\lhd),$ & $\sigma_{\lhd}$ : &$\exists\;\sigma_{\lhd}^{-1}$ iff & $\Delta(a)= (a,a)$ & shelf, quandle,\\
$V:=\k S$ & $(a,b) \mapsto (b,a \lhd b)$   & $S$ is a rack & & (twisted) rack \\
\hline
\end{tabular} 
\end{center}

\begin{center}
\begin{tabular}{|c|c|c|c|}
\hline 
\textbf{structure} & \textbf{braided characters}  & \textbf{braided modules} & \textbf{homology operations}\\
\hline 
vector & any $\epsilon \in V^*$ & space endowed with &   multiplication \\
space &  &commutative operators & by scalars \\
\hline 
unital &algebra character:& algebra module:& peripheral:  \\
associative& $\epsilon(vw)=\epsilon(v)\epsilon(w),$  & $m \cdot vw  =$ & ${^\epsilon}\!\pi_w(v_1 \ldots v_{n-1} v_n) =$  \\
algebra & $ \epsilon(\one)=1$ & $(m \cdot v) \cdot w$ & $v_1 \ldots v_{n-1} (v_n w)$\\  
\hline 
unital & Lie character: & Leibniz module (\cite{Cyclic}):  &  adjoint:\\
Leibniz & $\epsilon([v,w])=0,$ & $m \cdot [v,w]  =$  & ${^\epsilon}\!\pi_w(v_1 \ldots v_n) =$  \\
 algebra & $ \epsilon(\one)=1$ & $(m \cdot v) \cdot w - (m \cdot w) \cdot v$ & $\sum_{i=1}^n v_1 \ldots [v_i,w] \ldots v_n$\\
 \hline 
 & shelf character: & shelf module (\cite{ChangNelson}, \cite{Kamada3}): & diagonal:\\
shelf & $ \epsilon (a\lhd b) = \epsilon (a)$&  $(m \cdot a) \cdot b =$  &  ${^\epsilon}\!\pi_b(a_1, \ldots, a_n) =$ \\
  & &$(m \cdot b) \cdot (a \lhd b)$ & $(a_1\lhd b, \ldots, a_n\lhd b)$\\
 \hline
\end{tabular} 
   \captionof{table}{Braided homology ingredients in concrete algebraic settings}\label{tab:BasicIngredients}
\end{center}

Pre-braidings for vector spaces and self-distributive structures are classical; that for Leibniz algebras was used in the Lie case by A.Crans in \cite{Crans} (cf. also \cite{CatSelfDistr}), but does not seem to be widely known; the author has never met the pre-braiding for associative algebras elsewhere.

\medskip

Our braided homology theory, as well as the pre-braidings for associative and Leibniz algebras, are raised to the \textit{{categorical level}} in section \ref{sec:cat}. The \textit{categorification of SD structures}  and of the corresponding pre-braiding is less straightforward and is done in \cite{Lebed3} (see also \cite{CatSelfDistr} for an alternative construction). Several typical applications of the categorical approach are presented, obtained by changing the underlying category (e.g. the {Leibniz \underline{super}algebra} homology) or using different types of categorical dualities (e.g. cobar and Cartier complexes for \underline{co}algebras).

An important feature of our categorified pre-braiding, besides relaxing the invertibility condition, is its \textit{{``local''}} character: instead of demanding the whole category to be pre-braided, we need a pre-braiding for a single object only, omitting in particular the naturality condition. 

\medskip
This paper contains the results from the first part of the author's thesis \cite{Lebed}, where more details and proofs can be found.

\medskip
\underline{\textbf{Notations and conventions.}}

We systematically use notation $\kk$ for a commutative unital ring, and $\k$ for a field. The word ``linear'' means $\kk$- (or $\k$-) linear, and all tensor products are over $\kk$ (or $\k$), unless we work in the settings of a general monoidal category. The category of $\kk$-modules (resp. $\k$-vector spaces) and linear maps is denoted by $\kVect$ (resp. $\Vect$).

 Notation $T(V):=\bigoplus_{n \geq 0}V^{\otimes n}$ is used for the tensor algebra of an $\kk$-module $V,$ with $V^{\otimes 0}:=\kk.$ A simplified notation is used for its elements:
$$ \overline{v} = v_1 v_2 \ldots  v_n := v_1\otimes v_2\otimes\ldots\otimes v_n \in V^{\otimes n},$$
leaving the tensor product sign for 
$$ v_1 v_2 \ldots  v_n \otimes w_1 w_2 \ldots w_m \in V^{\otimes n}\otimes W^{\otimes m}.$$

We often call the $\kk$-module $T(V)$ the \textbf{\emph{tensor module}} of $V$, emphasizing that it can be endowed with a multiplication different from the usual concatenation. We talk about the \textbf{\emph{tensor space}} of $V$ in the $\k$-linear setting. 

The notation $V^{\otimes n}$ is sometimes reduced to $V^{n},$ and $\Id_{V^{\otimes n}}$ to $\Id_n.$

Given an $\kk$-module $V$ and a linear map $\varphi:V^{\otimes l}\rightarrow V^{\otimes r},$ the following notations are repeatedly used:
\begin{equation}\label{eqn:phi_i}
\varphi_i := \Id_{i-1}\otimes\varphi \otimes \Id_{k-i+1}:V^{k+l}\rightarrow V^{k+r},
\end{equation}
\begin{equation}\label{eqn:phi^i}
\varphi^n := (\varphi_1)^{\circ n}=\varphi_1 \circ \cdots \circ\varphi_1 :V^{ k}\rightarrow V^{k+n(r-l)},
\end{equation}
where $\varphi_1$ is composed with itself $n$ times. Similar notations are used for tensor products of different modules.

The above notations are also used (when they make sense) in the context of a strict monoidal category.

By a \emph{differential} on a graded $\kk$-module (for example $T(V)$) we mean a square zero endomorphism of degree $+1$ or $-1,$ while a \emph{bidifferential} is a pair of anticommuting differentials. The word \emph{complex} always means a differential (co)chain complex here, i.e. a graded $\kk$-module endowed with a differential. Similarly, a \emph{bicomplex} is a graded $\kk$-module endowed with a bidifferential.

\section{Braided world: a short reminder}\label{sec:braid}

We recall here various facts about \textit{braided vector spaces} necessary for subsequent sections. For a more systematic treatment of braid groups, \cite{Braids} is an excellent reference. A particular focus is made here on \textit{quantum (co-)shuffles}, introduced and studied by M.Rosso in \cite{Rosso2} and \cite{Rosso1}. These structures will provide an important tool for constructing braided space homologies in the next section. 

 All the notions defined here for vector spaces are directly generalized for $\kk$-modules. We prefer the language of vector spaces for its familiarity.

\begin{definition}
\begin{itemize}
\item 
A \emph{pre-braiding} on a $\k$-vector space $V$ is a linear map  $\sigma:V \otimes V \rightarrow V \otimes V$ satisfying the \emph{Yang-Baxter equation} (abbreviated as YBE)
\begin{equation}
  \tag{YB}
  \sigma_{1}\circ \sigma_{2}\circ \sigma_{1}=\sigma_{2}\circ \sigma_{1}\circ \sigma_{2} : V \otimes V  \otimes V \longrightarrow V \otimes V \otimes V,
  \label{eqn:YB}
\end{equation}
where $\sigma_{i}$ is the braiding $\sigma$ applied to components $i$ and $i+1$ of $V^{\otimes 3}$ (cf. notation \eqref{eqn:phi_i}). 

\item A \emph{braiding} is an invertible pre-braiding.

\item A braiding is called \emph{symmetric} if $\sigma^2 = \Id_{V \otimes V }.$

\item A vector space endowed with a (pre-)braiding is called \emph{(pre-)braided}.

\item A \emph{braided morphism} between pre-braided spaces $(V, \sigma_V)$ and $(W, \sigma_W)$ is a $\k$-linear map $f:V \rightarrow W$ respecting the pre-braidings:
$$ (f \otimes f) \circ \sigma_V = \sigma_W \circ (f \otimes f) : V \otimes V \rightarrow W \otimes W.$$
\end{itemize}
\end{definition}

Note that unlike most authors \textbf{we work with pre-braidings}, allowing interesting highly non-invertible examples.

\begin{remark}\label{rmk:SetYB}
A (pre-)braiding on a set is defined similarly: tensor products $\otimes$ are simply replaced by Cartesian products $\times.$ These two settings are particular cases of a more abstract one: they both come from \textbf{\textit{(pre-)braided categories}}, studied in detail in section \ref{sec:cat}.
\end{remark}

\begin{example}
The most familiar braidings are the \emph{\textbf{flip}},  the \emph{\textbf{signed flip}} and their generalization for graded vector spaces, the \emph{\textbf{Koszul flip}}:
\begin{align}
\tau : v\otimes w &\longmapsto w\otimes v,\notag \\
-\tau : v\otimes w &\longmapsto - w\otimes v,\notag \\
\tau_{Koszul} : v\otimes w &\longmapsto (-1)^{\deg v \deg w} w\otimes v \label{eqn:KoszulFlip}
\end{align}
for homogeneous $v$ and $w.$ The last braiding explains the {\emph{Koszul sign convention}} in many settings.
\end{example}

\begin{remark} In general for a (pre-)braiding $\sigma,$ its opposite $-\sigma:v\otimes w\mapsto -\sigma (v\otimes w)$ is also a (pre-)braiding.
\end{remark} 

\medskip
A pre-braiding gives an action of the positive braid monoid $B^+_n$ on $V^{\otimes n},$ i.e. a monoid morphism
\begin{align}
\rho: B^+_n & \longrightarrow \End_{\k}(V^{\otimes n}),\notag \\
b &\longmapsto b^{\sigma} \label{eqn:^sigma}
\end{align}
defined on the generators $\sigma_i$ of $B^+_n$ by 
\begin{equation}\label{eqn:BnAction}
\sigma_i \longmapsto \Id_{i-1} \otimes \sigma \otimes \Id_{n-i-1}.
\end{equation}
 This action is best depicted in the graphical form
\begin{center}
\begin{tikzpicture}[scale=0.9]
\node at (-1,1) {$\sigma_i(\overline{v}) =$};
\draw (0,0)--(0,2);
\draw (1,0)--(1,2);
\node at (2,1) {$\cdots$};
\draw (3,0)--(3,2);
\draw [rounded corners, thick, purple] (5,0) -- (5,0.5) -- (4,1.5) --(4,2);
\draw [rounded corners, thick, purple] (4,0) -- (4,0.5) -- (4.4,0.9);
\draw [rounded corners, thick, purple] (5,2) -- (5,1.5) -- (4.6,1.1);
\draw (6,0)--(6,2);
\node at (7,1) {$\cdots$};
\draw (8,0)--(8,2);
\draw [->, thick]  (10,0.5) -- (10,1.5);
\node [below] at (0,0) {$v_1$};
\node [below] at (1,0) {$v_2$};
\node [below] at (3,0) {$v_{i-1}$};
\node [below,purple] at (4,0) {$v_i$};
\node [below,purple] at (5,0) {$v_{i+1}$};
\node [below] at (6,0) {$v_{i+2}$};
\node [below] at (8,0) {$v_n$};
\node [above] at (0,2) {$v_1$};
\node [above] at (1,2) {$v_2$};
\node [above] at (2.9,2) {$v_{i-1}$};
\node [above] at (6.1,2) {$v_{i+2}$};
\node [above] at (8,2) {$v_n$};
\node [above,purple] at (4.5,2) {${\sigma(v_i\otimes v_{i+1})}$}; 
\node [below] at (0.5,0) {$\otimes$};
\node [below] at (1.5,0) {$\otimes$};
\node [below] at (3.5,0) {$\otimes$};
\node [below,purple] at (4.5,0) {$\otimes$};
\node [below] at (5.5,0) {$\otimes$};
\node [below] at (7.5,0) {$\otimes$};
\node [above] at (0.5,2) {$\otimes$};
\node [above] at (1.5,2) {$\otimes$};
\node [above] at (3.4,2) {$\otimes$};
\node [above] at (5.6,2) {$\otimes$};
\node [above] at (7.5,2) {$\otimes$};
\end{tikzpicture}
\captionof{figure}{$B^+_n$ acts via pre-braidings}\label{pic:BraidedAction}
\end{center} 

All diagrams in this work are to be read \textbf{from bottom to top}, as indicated by the arrow on the diagram above. One could have presented the crossing as \begin{tikzpicture}[scale=0.5]
 \draw (0,0) -- (1,1);
 \draw (0,1) -- (0.3,0.7);
 \draw (1,0) -- (0.7,0.3);
 \node  at (1.2,0) {};
\end{tikzpicture}, which is often done in literature. It is just a matter of convention, and the one used here comes from rack theory (section \ref{sec:racks}).

For braidings, the action above is in fact an action of the braid group $B_n,$ and for symmetric braidings it is an action of the symmetric group $S_n.$

The graphical translation of the YBE for pre-braidings is the third Reidemeister move, which is at the heart of knot theory:
\begin{center}
\begin{tikzpicture}[scale=0.5]
\draw [rounded corners](0,0)--(0,0.25)--(0.4,0.4);
\draw [rounded corners](0.6,0.6)--(1,0.75)--(1,1.25)--(1.4,1.4);
\draw [rounded corners](1.6,1.6)--(2,1.75)--(2,3);
\draw [rounded corners](1,0)--(1,0.25)--(0,0.75)--(0,2.25)--(0.4,2.4);
\draw [rounded corners](0.6,2.6)--(1,2.75)--(1,3);
\draw [rounded corners](2,0)--(2,1.25)--(1,1.75)--(1,2.25)--(0,2.75)--(0,3);
\node  at (5,1.5){$=$};
\end{tikzpicture}
\begin{tikzpicture}[scale=0.5]
\node  at (-2,1.5){};
\draw [rounded corners](1,1)--(1,1.25)--(1.4,1.4);
\draw [rounded corners](1.6,1.6)--(2,1.75)--(2,3.25)--(1,3.75)--(1,4);
\draw [rounded corners](0,1)--(0,2.25)--(0.4,2.4);
\draw [rounded corners](0.6,2.6)--(1,2.75)--(1,3.25)--(1.4,3.4);
\draw [rounded corners](1.6,3.6)--(2,3.75)--(2,4);
\draw [rounded corners](2,1)--(2,1.25)--(1,1.75)--(1,2.25)--(0,2.75)--(0,4);
\node  at (3,1){.};
\end{tikzpicture}
\captionof{figure}{Yang-Baxter equation $=$ Reidemeister move III}\label{pic:YB}
\end{center}

\medskip
Numerous constructions become natural in the graphical settings. For instance,

\begin{remark}\label{rmk:br_tensor}
A (pre-)braiding $\sigma$ on $V$ naturally extends to a (pre-)braiding $\ssigma$ on its tensor space $T(V)$ via
$$\ssigma(\overline{v}\otimes\overline{w}) = (\sigma_k\cdots\sigma_1)\cdots(\sigma_{n+k-2}\cdots\sigma_{n-1})(\sigma_{n+k-1}\cdots\sigma_{n}) (\overline{v}\overline{w}) \in V^{\otimes k}\otimes V^{\otimes n}$$
for  pure tensors $\overline{v} \in V^{\otimes n}, \overline{w} \in V^{\otimes k}$ ($\overline{v}\overline{w}$ being simply their concatenation),
or, graphically,
\begin{center}
\begin{tikzpicture}[scale=0.3]
\node  at (5,6){$\scriptstyle{\otimes}$};
\draw (6,0)--(0,6);
\draw (7,0)--(1,6);
\draw (9,0)--(3,6);
\draw (1,0)--(3.3,2.3);
\draw (3.7,2.7)--(3.8,2.8);
\draw (4.2,3.2)--(4.8,3.8);
\draw (5.2,4.2)--(7,6);
\draw (2,0)--(3.8,1.8);
\draw (4.2,2.2)--(4.3,2.3);
\draw (4.7,2.7)--(5.3,3.3);
\draw (5.7,3.7)--(8,6);
\draw (4,0)--(4.8,0.8);
\draw (5.2,1.2)--(5.3,1.3);
\draw (5.7,1.7)--(6.3,2.3);
\draw (6.7,2.7)--(10,6);
\node [below] at (1,0) {$v_1$};
\node [below] at (2,0) {$v_2$};
\node [below] at (3,0) {$\cdot$};
\node [below] at (4,0) {$v_n$};
\node [below] at (5,0) {$\scriptstyle{\otimes}$};
\node [below right] at (5,0) {$w_1$};
\node [below right] at (6.5,0) {$w_2$};
\node [below right] at (8,0) {$\cdot$};
\node [below right] at (8.5,0) {$w_k$};
\node at (11,0) {.};
\end{tikzpicture}
\captionof{figure}{Pre-braiding extended to $T(V)$}\label{pic:BrTensor}
\end{center}
\end{remark}

\medskip
Recall further the famous {set inclusion} (not preserving the group structure)
\begin{align}
 S_n & \hookrightarrow B_n \notag\\
 s = \tau_{i_1}\tau_{i_2}\cdots \tau_{i_k}& \mapsto T_s := \sigma_{i_1}\sigma_{i_2}\cdots \sigma_{i_k} \label{eqn:T_s}
\end{align}
where 
\begin{itemize}
\item  $\tau_i \in S_n$ is the transposition of the neighboring elements $i$ and $i+1,$ 
\item  $\sigma_i$ is the corresponding generator of $B_n,$
\item  $\tau_{i_1}\tau_{i_2}\cdots \tau_{i_k}$ is one of the shortest words representing $s\in S_n.$
\end{itemize} 
 This inclusion factorizes through
$$S_n  \hookrightarrow B_n^+ \hookrightarrow B_n.$$

\medskip
The following subsets of symmetric groups deserve particular attention:
\begin{definition} 
 \emph{Shuffle sets} are the permutation sets
$$Sh_{p,q}:=\bigg\{ s \in S_{p+q} \text{~s.t.} 
\begin{array}{|l}
s(1)<s(2)<\ldots<s(p), \\
s(p+1)<s(p+2)<\ldots<s(p+q)
\end{array}
\bigg\}. $$
\end{definition}

In other words, one permutes $p+q$ elements preserving the order within the first $p$ and the last $q$ elements, just like when shuffling cards, which explains the name. Shuffles and their diverse modifications appear, sometimes quite unexpectedly, in various areas of mathematics.

\medskip
Everything is now ready for defining quantum shuffle algebras. 

\begin{definition}\label{def:qu_sh}
The \emph{quantum shuffle multiplication} on the tensor space $T(V)$ of a pre-braided vector space $(V,\sigma)$ is the $\k$-linear extension of the map
\begin{align}
 \sh = \sh^{p,q}:  V^{\otimes p}\otimes V^{\otimes q} & \longrightarrow V^{\otimes(p+q)},\notag \\
 \overline{v}\otimes\overline{w} & \longmapsto \overline{v}\sh\overline{w}:= \sum_{s\in Sh_{p,q}} T_s^\sigma (\overline{v}\overline{w}).\label{eqn:qu_sh}
\end{align}
Notation $T_s^\sigma$ stands for the lift $T_s \in B_n^+$ (cf. \eqref{eqn:T_s}) acting on $V^{\otimes n}$ via the pre-braiding $\sigma$ (cf. \eqref{eqn:^sigma}).

The algebra $\ShAlg_\sigma(V):=(T(V),\sh)$ is the \emph{quantum shuffle algebra} of $(V,\sigma).$ 
\end{definition}

By a {\emph{(pre-)braided Hopf algebra}} (in the sense of S.Majid, cf. Definition 2.2 in \cite{MajidBraided} for example) we mean an additional structure on a (pre-)braided vector space satisfying all the axioms of a Hopf algebra except for the compatibility between the multiplication and the comultiplication, which is replaced by the braided compatibility (this last notion is recalled in the following theorem). 

The quantum shuffle multiplication can be upgraded to an interesting pre-braided Hopf algebra structure:

\begin{theorem}\label{thm:Sh}
Let $(V,\sigma)$ be a pre-braided vector space.
\begin{enumerate}
\item  \label{prop:Ass}
The multiplication $\sh$ of $Sh_\sigma (V)$ is associative.
\item If $\sigma^2=\Id,$ then the multiplication $\sh$ is \textbf{$\ssigma$-commutative}, i.e.
$$\sh(\overline{v}\otimes\overline{w})=\sh(\ssigma(\overline{v}\otimes\overline{w}))$$
(with the extension $\ssigma$ of $\sigma$ to $T(V)$ from remark \ref{rmk:br_tensor}).
\item The element $1 \in \kk$ is a unit for $Sh_\sigma (V).$
\item The \textbf{deconcatenation} and the \textbf{augmentation} maps
\begin{align*}
 \Delta :  v_1 v_2 \ldots v_n & \longmapsto \sum_{p=0}^{n}v_1 v_2 \ldots v_p \otimes v_{p+1} \ldots v_n,
 & \varepsilon :  v_1 v_2 \ldots v_n & \longmapsto 0, \\
 1 & \longmapsto 1\otimes 1, &  1 & \longmapsto 1
\end{align*}
(where an empty product means $1$) define, after linearizing, a counital coalgebra structure on $T(V).$
\item These algebra and coalgebra structures are \textbf{$\ssigma$-compatible}, in the sense that
$$\Delta\circ\sh = (\sh\otimes\sh)\circ\ssigma_2\circ(\Delta\otimes\Delta).$$
\item \label{prop:antipode}
An antipode can be given on $Sh_\sigma (V)$ by linearizing the map
\begin{align}
 s : \overline{v} & \longmapsto (-1)^n T_{\Delta_n}^\sigma (\overline{v}), \qquad  \overline{v} \in V^{\otimes n},\notag\\
 1 & \longmapsto 1,\notag\\
 \text{where} \qquad\qquad \Delta_n &:= \bigl( \begin{smallmatrix}
 1 & 2 & \cdots & n\\ n & n-1 & \cdots & 1
\end{smallmatrix} \bigr)
\in S_n. \label{eqn:Garside}
\end{align}  
\end{enumerate}
The pre-braided vector space $(T(V),\ssigma)$ becomes thus a {pre-braided Hopf algebra}.
\end{theorem}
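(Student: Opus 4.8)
The plan is to verify the six claimed properties one at a time, leaning heavily on the braid group action \eqref{eqn:BnAction} and the graphical calculus, since each statement is ultimately a matter of comparing how certain shuffle permutations lift to $B_n^+$ and act via $\sigma$. The central technical fact underlying everything is that the assignment $s \mapsto T_s$ of \eqref{eqn:T_s} respects composition whenever lengths add, i.e. if $\ell(st) = \ell(s) + \ell(t)$ in $S_{n}$ then $T_{st}^\sigma = T_s^\sigma \circ T_t^\sigma$ on $V^{\otimes n}$; this is precisely where the YBE (= Reidemeister III, figure \ref{pic:YB}) is used, guaranteeing that the lift is well-defined independently of the chosen reduced word. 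I would state and use this \emph{length-additivity} principle as the workhorse lemma throughout.

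First I would prove associativity (item \ref{prop:Ass}). The key is a bijective/combinatorial identity on shuffle sets: the iterated shuffle $(\overline{u}\sh\overline{v})\sh\overline{w}$ and $\overline{u}\sh(\overline{v}\sh\overline{w})$ both expand as $\sum_{s\in Sh_{p,q,r}} T_s^\sigma(\overline{u}\,\overline{v}\,\overline{w})$, where $Sh_{p,q,r}\subseteq S_{p+q+r}$ is the set of $(p,q,r)$-shuffles, via the classical recursive decompositions $Sh_{p+q,r}\cdot(Sh_{p,q}\times\Id) = Sh_{p,q,r} = Sh_{p,q+r}\cdot(\Id\times Sh_{q,r})$. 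The only subtlety is that composing the braid lifts requires the relevant lengths to add, which they do for shuffles; this is exactly where length-additivity enters, and I would check it carefully. Item (3), that $1\in\kk = V^{\otimes 0}$ is a two-sided unit, is immediate since $Sh_{p,0}=Sh_{0,q}=\{\Id\}$.

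For the coalgebra structure (item 4) I would note that deconcatenation is the \emph{free} coalgebra comultiplication and that coassociativity and counitality are formal and $\sigma$-independent; no braiding is involved, so this is routine. The $\ssigma$-compatibility (item 5) $\Delta\circ\sh = (\sh\otimes\sh)\circ\ssigma_2\circ(\Delta\otimes\Delta)$ is the genuinely ``braided'' computation and I expect it to be the main obstacle: one must show that deconcatenating a shuffle of $\overline{v}$ and $\overline{w}$ at every position reorganizes, after braiding the two middle blocks past each other via $\ssigma$ (the extension of remark \ref{rmk:br_tensor}), into shuffles of the deconcatenated pieces. This is the braided analogue of the Hopf property of the classical shuffle algebra, and the cleanest route is the graphical one: draw both sides as strand diagrams and read off that they describe the same braid, invoking figure \ref{pic:BrTensor} to interpret $\ssigma_2$ as dragging a whole block of strands across another. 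The $\ssigma$-cocommutativity of item (2) under $\sigma^2=\Id$ follows from the symmetry $T_s^\sigma \ssigma = $ (shuffle of the swapped blocks), again read off graphically.

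Finally, for the antipode (item \ref{prop:antipode}) I would verify the antipode axiom $\sh\circ(s\otimes\Id)\circ\Delta = \varepsilon(-)\,1 = \sh\circ(\Id\otimes s)\circ\Delta$ by observing that $\Delta_n$ is the longest element $w_0\in S_n$, so $T_{\Delta_n}^\sigma$ is the positive half-twist (Garside element), and the telescoping cancellation in the convolution $s\ast\Id$ reduces, via length-additivity applied to the factorizations $w_0 = (\text{shuffle})\cdot w_0'$, to a standard alternating-sum collapse; here the sign $(-1)^n$ is exactly what makes the terms cancel in pairs. Having established items (1)--(6), the concluding sentence that $(T(V),\ssigma)$ is a pre-braided Hopf algebra is then just the assembly of these pieces against the definition of a (pre-)braided Hopf algebra recalled before the theorem, with item (5) supplying the braided compatibility axiom; no invertibility of $\sigma$ is needed at any point, consistent with the paper's insistence on working with pre-braidings.
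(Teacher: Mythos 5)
The first thing to say is that the paper contains no proof of this theorem to compare against: it remarks that the statement is classical for invertible braidings (\cite{Rosso1}) and defers the pre-braided case to the thesis \cite{Lebed}. Judged on its own merits, your proposal is correct and is, in essence, the standard Rosso-style shuffle argument, organized so that only \emph{positive} braid lifts $T_s \in B_n^+$ ever occur --- which is exactly why invertibility of $\sigma$ is never needed, i.e.\ exactly the point the paper wants to make. Your workhorse lemma is the right one: $T_s$ is well defined by Matsumoto's theorem together with the YBE, and $T_{st}^\sigma = T_s^\sigma \circ T_t^\sigma$ whenever $\ell(st)=\ell(s)+\ell(t)$. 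Associativity then follows from the unique length-additive parabolic factorizations $Sh_{p,q,r}=Sh_{p+q,r}\cdot(Sh_{p,q}\times\Id)=Sh_{p,q+r}\cdot(\Id\times Sh_{q,r})$, and the braided Hopf compatibility from cutting a $(p,q)$-shuffle at position $m$: it factors length-additively as (a shuffle of the two left pieces, tensor a shuffle of the two right pieces) composed with the crossing of the two middle blocks, whose lift is precisely $\ssigma_2$. One step you leave implicit in item 2 deserves a sentence: right multiplication by the block transposition $\chi$ is a bijection $Sh_{k,n}\to Sh_{n,k}$ but is \emph{not} length-additive (its length drops), so one cannot argue via the braid lift alone; the hypothesis $\sigma^2=\Id$ is used to factor the whole action through $S_{n+k}$ before regrouping, and without it the statement genuinely fails.

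The only place where I would press for more detail is the antipode. ``Terms cancel in pairs'' is correct but needs the explicit involution: the operators produced by the pair $(p,u)$ with $u \in Sh_{p,n-p}$ are the lifts of exactly those $w \in S_n$ that are decreasing on $\{1,\dots,p\}$ and increasing on $\{p+1,\dots,n\}$; such a ``valley'' permutation with minimum at position $m$ arises for precisely the two consecutive values $p=m-1$ and $p=m$, both factorizations $w=u\cdot(\Delta_p\times\Id)$ being reduced, so by Matsumoto they define the \emph{same} element of $B_n^+$, hence the same operator, carrying opposite signs $(-1)^{m-1}$ and $(-1)^m$. Thus the convolution $s\ast\Id$ vanishes in positive degrees (and symmetrically for $\Id\ast s$), which is the antipode axiom. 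With that involution spelled out, all six items and your final assembly into a pre-braided Hopf algebra are sound.
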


This result is well-known for invertible braidings (\cite{Rosso1}); we point out that it still holds when the pre-braiding admits no inverse. See \cite{Lebed} for a proof.

\begin{remark}
Dually (in the sense to be specified in subsection \ref{sec:co}), the tensor space of a pre-braided vector space $(V,\sigma)$ can be endowed with the \textit{\textbf{quantum co-shuffle comultiplication}}:
\begin{align}
\csh|_{V^{\otimes n}} &:= \sum_{p+q=n;\:p,q\ge 0} \csh^{p,q},\notag\\ 
\csh^{p,q} &:= \sum_{s\in Sh_{p,q}} T_{s^{-1}}^\sigma : V^{\otimes n} \longrightarrow V^{\otimes p}\otimes V^{\otimes q}, \label{eqn:cosh}
\end{align} 
which can be upgraded to a pre-braided Hopf algebra structure ``dual'' to that described in theorem \ref{thm:Sh}, denoted by $\ShCoalg_\sigma(V).$
\end{remark}

\section{Homologies of braided vector spaces}

In this section we introduce a homology theory of braided vector spaces. We propose two different viewpoints on our ``braided'' differentials (all the notions and properties are explained in this section):
\begin{center}
\begin{tabular}{|c|c|l|}
\hline 
 \textbf{approach} & \textbf{subsection} & \multicolumn{1}{c|}{\textbf{advantages}} \\
\hline
\textit{quantum co-shuffle} & \ref{sec:basic} & \ii the sign manipulation is hidden in \\
\textit{comultiplication} &  & the choice of the negative braiding $- \sigma,$ \\
and \textit{square zero}& &  \ii a subscript-free approach, \\
\textit{coelements} & &  \ii compact formulas;\\
\hline 
\textit{graphical calculus}:& \ref{sec:simplicial} & \ii a tool easy to manipulate, \\
 diagrams, braids & & \ii a finer structure of a pre-bisimplicial complex, \\ 
  & & \ii an intuitive definition of degenerate and  \\   
  & &   normalized complexes via braided coalgebras. \\ 
\hline 
\end{tabular}
   \captionof{table}{Two approaches to ``braided'' differentials}\label{tab:2Approches}
\end{center}

We also study certain homology operations for ``braided'' homologies, generalizing those introduced by M.Niebrzydowski and J.Przytycki in the context homologies of self-distributive structures (\cite{Prz2}, \cite{Prz1}), and we obtain a new interpretation and a generalization of J.-L.Loday's hyper-boundaries, automatically calculating all their compositions (cf. \cite{Cyclic}, exercise E.2.2.7).

Everything described here can be translated {verbatim} to the setting of $\kk$-modules. We prefer working with vector spaces for the sake of clarity. A categorical version of all the results is given in section \ref{sec:cat}.

\medskip

Fix a \underline{pre-braided $\k$-vector space} $(V,\sigma).$

\subsection{Pre-braiding + character $\longmapsto$ homology}\label{sec:basic}

We start with distinguishing elements of $V^*$ which behave with respect to the pre-braiding $\sigma$ as if it were just a flip $\tau$:

\begin{definition}\label{def:cuts}
Two co-elements $f,g \in V^*$ are called \emph{$\sigma$-compatible} if 
\begin{equation}\label{eqn:compatCochar}
(f \otimes g)\circ \sigma = g \otimes f, \qquad\text{and} \qquad (g \otimes f)\circ \sigma  = f \otimes g.
\end{equation}

 A \emph{braided character} is an $\epsilon \in V^*$ $\sigma$-compatible with itself, i.e.
\begin{equation}\label{eqn:Cut}
(\epsilon \otimes\epsilon)\circ \sigma  = \epsilon \otimes\epsilon,
\end{equation}
or, in the co-shuffle form, 
\begin{equation}\label{eqn:CutSh}
(\epsilon \otimes\epsilon)\circ \Csh^{1,1} = 0:V \otimes V \rightarrow \k.
\end{equation}
\end{definition}

We omit the part \emph{braided} of the last term when it does not lead to confusion.

The definition of braided character takes a simple graphical form:
 
\begin{center}
\begin{tikzpicture}[scale=0.5]
 \draw[thick,rounded corners] (0,-0.7) -- (0,0) -- (0.4,0.4);
 \draw[thick,rounded corners] (0.6,0.6) -- (1,1);
 \draw[thick,rounded corners] (1,-0.7) -- (1,0) -- (0,1);
 \fill[violet] (0,1) circle (0.1);
 \fill[violet] (1,1) circle (0.1);
 \node at (0,1) [above,violet] {$\epsilon$};
 \node at (1,1) [above,violet] {$\epsilon$};
 \node at (2,0.3)  {$=$};
 \draw[thick] (3,0) -- (3,-0.7);
 \draw[thick] (4,0) -- (4,-0.7);
 \node at (3,0) [above,,violet] {$\epsilon$};
 \node at (4,0) [above,,violet] {$\epsilon$}; 
 \fill[violet] (4,0) circle (0.1);
 \fill[violet] (3,0) circle (0.1);
 \node at (5,-0.7) {.}; 
\end{tikzpicture}
\captionof{figure}{Braided character}\label{pic:BrCoChar}
\end{center}

The labels $\epsilon$ are often omitted when clear from the context.
 
 In section \ref{sec:examples}, we recover familiar notions of characters for algebraic structures (such as associative algebras) as examples of braided characters for the corresponding ``structural'' braidings, which justifies our term. Counits often turn out to be braided characters, hence the notation $\epsilon.$

\begin{remark}\label{rmk:CharHomToTrivial}
Braided characters can also be regarded as {\emph{braided morphisms}} $\epsilon : V \rightarrow \k,$ where $\k$ is endowed with the {\emph{trivial braiding}}
$\k \otimes \k \simeq \k \overset{\Id_\k}{\longrightarrow} \k \simeq \k \otimes \k.$
This is consistent with the interpretation of usual characters for algebraic structures as homomorphisms to trivial structures.
\end{remark}

\medskip

A pre-braiding and a braided character are sufficient for constructing homologies:

\begin{theorem}\label{thm:cuts}
Let $(V,\sigma)$ be a pre-braided vector space. 
\begin{enumerate}
\item 
For a braided character $\epsilon,$ the maps
\begin{align*}
V^{\otimes n} & \longrightarrow V^{\otimes (n-1)},\\
{^\epsilon}\! d : \overline{v} & \longmapsto \epsilon_1 \circ \Csh^{1,n-1}(\overline{v}), \\
d^\epsilon : \overline{v} & \longmapsto (-1)^{n-1} \epsilon_n \circ\Csh^{n-1,1}(\overline{v}) 
\end{align*}
(cf. notation \eqref{eqn:phi_i}) define a bidifferential on $T(V).$

\item For two braided characters $\epsilon$ and $\zeta,$ one gets a differential bicomplex 
 $(T(V),{^{\epsilon}}\! d,d^{\zeta}).$ If the braided characters are moreover $\sigma$-compatible, then one gets differential bicomplexes $(T(V),{^{\epsilon}}\!d,{^{\zeta}}\! d)$ and $(T(V),d^\epsilon,d^\zeta).$ 
 \end{enumerate}
\end{theorem}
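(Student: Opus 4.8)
The plan is to reduce every assertion to the \emph{coassociativity} of the quantum co-shuffle comultiplication $\Csh$ of the pre-braiding $-\sigma$, combined with two local relations living on $\Csh^{1,1}$. Since $-\sigma$ is again a pre-braiding, the dual of the associativity in Theorem~\ref{thm:Sh} (applied to $-\sigma$) makes $\Csh$ coassociative, so that the iterated co-shuffle $\Csh^{p,q,r}\colon V^{\otimes n}\to V^{\otimes p}\otimes V^{\otimes q}\otimes V^{\otimes r}$ (with $p+q+r=n$) is well defined and enjoys the two factorizations
\begin{equation*}
\Csh^{p,q,r}=(\Csh^{p,q}\otimes\Id_r)\circ\Csh^{p+q,r}=(\Id_p\otimes\Csh^{q,r})\circ\Csh^{p,q+r}.
\end{equation*}
First I would record an elementary ``merge'' principle: applying a co-element to a single tensor slot commutes with any $\Csh^{a,b}$ acting on the complementary slots, as the two maps touch disjoint factors. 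Iterating a boundary map, merging the inner co-element, and re-reading the outcome through the factorizations above, I would obtain on $V^{\otimes n}$ the identities
\begin{align*}
{^\epsilon}\!d\circ{^\zeta}\!d&=\bigl(\bigl[(\zeta\otimes\epsilon)\circ\Csh^{1,1}\bigr]\otimes\Id_{n-2}\bigr)\circ\Csh^{2,n-2},\\
d^\epsilon\circ d^\zeta&=-\bigl(\Id_{n-2}\otimes\bigl[(\epsilon\otimes\zeta)\circ\Csh^{1,1}\bigr]\bigr)\circ\Csh^{n-2,2},\\
{^\epsilon}\!d\circ d^\zeta&=(-1)^{n-1}(\epsilon\otimes\Id_{n-2}\otimes\zeta)\circ\Csh^{1,n-2,1},
\end{align*}
the signs being exactly those carried by $d^\zeta$. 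Everything then reduces to feeding the (co)character hypotheses into the isolated factor $\Csh^{1,1}$.

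For part~1 (a single $\epsilon$) I would put $\zeta=\epsilon$ in the first two identities and invoke the defining relation \eqref{eqn:CutSh} of a braided character, namely $(\epsilon\otimes\epsilon)\circ\Csh^{1,1}=0$ (equivalent to \eqref{eqn:Cut} since $\Csh^{1,1}=\Id-\sigma$); this makes both ${^\epsilon}\!d\circ{^\epsilon}\!d$ and $d^\epsilon\circ d^\epsilon$ vanish immediately. The anticommutator ${^\epsilon}\!d\circ d^\epsilon+d^\epsilon\circ{^\epsilon}\!d$ needs no further hypothesis: by the third identity the two summands share the structural factor $(\epsilon\otimes\Id_{n-2}\otimes\epsilon)\circ\Csh^{1,n-2,1}$ but carry opposite signs $(-1)^{n-1}$ and $(-1)^{n-2}$ (the second boundary now acting in degree $n-1$), so they cancel. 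The very same sign cancellation will settle the anticommutativity of ${^\epsilon}\!d$ with $d^\zeta$ in part~2 for \emph{arbitrary} braided characters $\epsilon,\zeta$, the point being that $\epsilon$ and $\zeta$ occupy the disjoint first and last slots and so never collide inside a $\Csh^{1,1}$.

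It remains to treat the same-sided bicomplexes $(T(V),{^\epsilon}\!d,{^\zeta}\!d)$ and $(T(V),d^\epsilon,d^\zeta)$, which is exactly where $\sigma$-compatibility enters. Adding the first identity to its $\epsilon\leftrightarrow\zeta$ transpose reduces the claim to $\bigl[(\zeta\otimes\epsilon)+(\epsilon\otimes\zeta)\bigr]\circ\Csh^{1,1}=0$; and since $\Csh^{1,1}=\Id-\sigma$, relation \eqref{eqn:compatCochar} gives $(\zeta\otimes\epsilon)\circ\Csh^{1,1}=\zeta\otimes\epsilon-\epsilon\otimes\zeta$ and $(\epsilon\otimes\zeta)\circ\Csh^{1,1}=\epsilon\otimes\zeta-\zeta\otimes\epsilon$, whose sum is $0$, with the $d^\epsilon,d^\zeta$ case being word-for-word the same on the last two slots. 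The main obstacle I anticipate is not conceptual but organizational: keeping disciplined track of which of the two coassociativity factorizations realizes a given composite of boundaries and which one exposes $\Csh^{1,1}$ (the co-elements $\epsilon,\zeta$ get transposed in the merging step), together with the attendant $(-1)$-signs. Once the three displayed identities are nailed down, each clause of the theorem drops out by substituting the relevant local relation on $\Csh^{1,1}$.
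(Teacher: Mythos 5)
Your proof is correct and follows essentially the same route as the paper's: both reduce the compositions of boundary maps, via the coassociativity of $\Csh$ and the disjoint-slot commutation, to expressions of the form $((\,\cdot\,\otimes\,\cdot\,)\circ\Csh^{1,1})$ composed with a higher co-shuffle component, then kill these using \eqref{eqn:CutSh} or \eqref{eqn:compatCochar}, with the mixed left-right anticommutativity coming from the $(-1)^{n-1}$ versus $(-1)^{n-2}$ sign cancellation. Your three master identities are exactly the computations the paper performs (and indicates by ``for example''), just organized more systematically.
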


\begin{proof}
The verifications use the coassociativity of $\Csh$ and the defining property \eqref{eqn:CutSh} of braided characters. For example,
\begin{align*}
{^{\epsilon}}\! d \circ {^{\epsilon}}\! d (\overline{v}) &= \epsilon_1 \circ(\epsilon \otimes \Csh^{1,n-2}) \circ \Csh^{1,n-1} (\overline{v}) \\
& = ((\epsilon \otimes \epsilon)\circ \Csh^{1,1})_1 \circ \Csh^{2,n-2} (\overline{v}) \\
& = 0 \circ \Csh^{2,n-2} (\overline{v})= 0,
\end{align*}
and, for compatible braided characters,
\begin{align*}
({^{\zeta}}\! d \circ {^{\epsilon}}\! d +{^{\epsilon}}\! d \circ {^{\zeta}}\! d) (\overline{v}) &= \zeta_1 \circ(\epsilon \otimes \Csh^{1,n-2}) \circ \Csh^{1,n-1} (\overline{v}) +\epsilon_1 \circ(\zeta \otimes \Csh^{1,n-2}) \circ \Csh^{1,n-1} (\overline{v})\\
& = ((\zeta \otimes \epsilon + \epsilon \otimes \zeta)\circ \Csh^{1,1})_1 \circ \Csh^{2,n-2} (\overline{v}),\end{align*}
which, due to \eqref{eqn:compatCochar}, equals
$$((\zeta \otimes \epsilon - \epsilon \otimes \zeta) + (\epsilon \otimes \zeta-\zeta \otimes \epsilon ))_1 \circ \Csh^{2,n-2} (\overline{v}) =  0. \qedhere$$
\end{proof}

This proof can be understood as follows: the multiplication by a square zero element in the pre-braided Hopf algebra dual to $\ShCoalg_{-\sigma}$ is a square zero operator.  

The theorem gives for two braidings two compatible differentials on $T(V).$ Their linear combinations are then also differentials;  ${^\epsilon}\! d - d^\epsilon$ is a reccurrent example in practice. All such (bi)differentials, corresponding (bi)complexes and homologies are called \textit{\textbf{braided}} in what follows.

\medskip

We now give a survey of existing ``braided'' homologies, comparing them to our theory.

\begin{enumerate}
\item 
In \cite{HomologyYB}, J.S.Carter, M.Elhamdadi and M.Saito develop a homology theory for solutions $(S,\sigma)$ of the set-theoretic YBE using combinatorial and geometric methods completely different from ours. They also provide applications to virtual knot invariants. Their differential on $(\ZZ S)^{\otimes n}$ is our braided differential ${^\varepsilon}\!d - d{^\varepsilon},$ where $\varepsilon$ is the linearization of the map $\varepsilon:S \rightarrow \ZZ$ assigning $1$ to any $a \in S.$
\item
Our applications ${^\varepsilon}\!d,$ where $\varepsilon\in V^*$ are not necessarily braided characters, also recover the \emph{braided-differential calculus} of S.Majid (\cite{BraidedDiff}). He defines an addition law (related to the quantum co-shuffle comultiplication) on the quantum plane associated to a braiding, and defines a differentiation as an infinitesimal translation. In particular, taking 
\begin{itemize}
\item one-variable polynomials $T(V) = \k [x]$ (i.e. $V = \k x$),
\item the opposite of the $q$-flip $x \otimes x \mapsto  q x \otimes x$ (with $q \in \k^*$) as a braiding,
\item and the linearization of the map $\varepsilon(x)=1,$
\end{itemize}
 one gets the famous $q$-differentials
 $${^\varepsilon}\!d (x^{\otimes n}) = (n)_q x^{\otimes (n-1)}, \qquad (n)_q:=\tfrac{q^{n}-1}{q-1} = q^{n-1} + \cdots + q + 1.$$
\item The last approach to ``braided'' cohomologies to be mentioned here is M.Eisermann's \emph{Yang-Baxter cochain complex}, cf. \cite{Eisermann}. Motivated by the study of deformations of Yang-Baxter operators, he defines a degree $1$ differential on $\Hom_\k(V^{\otimes n},V^{\otimes n}).$ His second cohomology groups classify infinitesimal Yang-Baxter deformations. We do not know precisely how his construction is related to ours, but the parallels between the graphical versions of the two are very suggestive.
\end{enumerate}

\subsection{Comultiplication $\longmapsto$ degeneracies}\label{sec:simplicial}

In a more detailed study of the structure of braided (bi)complexes from theorem \ref{thm:cuts}, the simplicial approach proves to be particularly helpful.

 First, recall the notion of simplicial vector spaces (cf. \cite{Cyclic} for details and \cite{Prz1} for weak simplicial notions; note that our definition is a shifted version of theirs, and that our definition of bisimplicial vector spaces is different from the one in \cite{Cyclic}): 

\begin{definition}\label{def:Simpl}
Consider a collection of $\k$-vector spaces $V_n, \: n \ge 0,$
equipped with linear maps $d_{n;i}:V_n\rightarrow V_{n-1}$ (and $d'_{n;i}:V_n\rightarrow V_{n-1}$ and/or $s_{n;i}:V_n\rightarrow V_{n+1}$ when necessary) with $1 \le i \le n,$ denoted simply by $d_i,d'_i,s_i$ when the subscript $n$ is clear from the context. This datum is called
\begin{itemize}
\item \emph{a presimplicial vector space} if
\begin{align}
d_{i} d_{j} &= d_{j-1} d_{i} & \forall 1\le i < j \le n;\label{eqn:simpl1}
\end{align}
\item \emph{a very weakly simplicial vector space} if moreover
\begin{align}
s_{i} s_{j} &= s_{j+1} s_{i} & \forall 1\le i \le j \le n,\label{eqn:simpl2}\\
d_{i} s_{j} &= s_{j-1} d_{i} & \forall 1\le i < j \le n,\label{eqn:simpl3}\\
d_{i} s_{j} &= s_{j} d_{i-1} & \forall 1\le j+1 < i \le n;\label{eqn:simpl4}
\end{align}
\item \emph{a weakly simplicial vector space} if moreover
\begin{align}
d_{i} s_{i} &= d_{i+1} s_{i} & \forall 1\le i \le n;\label{eqn:simpl5}
\end{align}
\item \emph{a simplicial vector space} if moreover
\begin{align}
d_{i} s_{i} &= \Id_{V_n}  & \forall 1\le i \le n;\label{eqn:simpl6}
\end{align}
\item \emph{a pre-bisimplicial vector space} if \eqref{eqn:simpl1} holds for the $d_i$'s, the $d'_i$'s and their mixture:
\begin{align}
d_{i} d'_{j} &= d'_{j-1} d_{i}, \qquad d'_{i} d_{j} = d_{j-1} d'_{i} & \forall 1\le i < j \le n;\label{eqn:simpl1'}
\end{align}
\item \emph{a (weakly / very weakly) bisimplicial vector space} if it is pre-bisimplicial, with (weakly / very weakly) simplicial structures $(V_n,d_{n;i},s_{n;i})$ and $(V_n,d'_{n;i},s_{n;i}).$ 
\end{itemize}
The omitted subscripts $n,n \pm 1$ are those which guarantee that the source of all the above mentioned morphisms is $V_n.$ The $d_i$'s and the $s_i$'s are called \emph{face} (resp. \emph{degeneracy}) maps.
\end{definition}

Simplicial vector spaces are interesting because of the following properties (see \cite{Cyclic} for most proofs):
\begin{proposition}\label{thm:SimplBasics}
\begin{enumerate}
\item\label{it:totaldiff} For any presimplicial vector space $(V_n,d_{n;i}),$ the map 
$\partial_n:=\sum_{i=1}^n (-1)^{i-1} d_{n;i}$
is a differential (called the \textbf{total differential}) for the graded vector space  $\wV:=\bigoplus_{n\ge 0} V_n.$
\item For any pre-bisimplicial vector space $(V_n,d_{n;i},d'_{n;i}),$ there is a bidifferential structure on $\wV$ given by $\partial_n$ and 
$\partial'_n:=\sum_{i=1}^n (-1)^{i-1} d'_{n;i}.$
\item\label{it:degen} For any weakly simplicial vector space $(V_n,d_{n;i},s_{n;i}),$ the complex $(V_n,\partial_n)$ contains a subcomplex (called \textbf{the degenerate subcomplex}) $D_n:=\sum_{i=1}^{n-1} s_{n-1;i}(V_{n-1}).$
\item If our vector space turns out to be simplicial, then the degenerate subcomplex is acyclic, hence $V_*$ is quasi-isomorphic to the \textbf{normalized complex} $N_*:=V_*/D_*.$
\item In the weakly bisimplicial case, $D_*$ is a sub-bicomplex of $V_*,$ acyclic in the bisimplicial setting.
\end{enumerate}
\end{proposition}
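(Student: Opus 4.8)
The plan is to dispatch the five items in sequence, exploiting the fact that the identities \eqref{eqn:simpl1}--\eqref{eqn:simpl6} were arranged precisely so that each item reduces either to a mechanical manipulation of faces and degeneracies or to the classical normalization theorem; the only genuinely homological content sits in the acyclicity assertions of items 4 and 5, and everything else is formal.

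For item \ref{it:totaldiff} I would expand
$$\partial_{n-1}\circ\partial_n=\sum_{i=1}^{n-1}\sum_{j=1}^{n}(-1)^{i+j}\,d_{n-1;i}\,d_{n;j}$$
and split the double sum into the ranges $i<j$ and $i\ge j$. Rewriting each $i<j$ term through \eqref{eqn:simpl1} as $d_{j-1}d_i$ and reindexing $(i,j)\mapsto(j-1,i)$ introduces an extra sign and reproduces exactly the negative of the $i\ge j$ block, so the two blocks cancel and $\partial^2=0$. Item 2 is the same computation applied to $\partial_{n-1}\partial'_n+\partial'_{n-1}\partial_n$, now fed by the mixed relations \eqref{eqn:simpl1'}, which yields the anticommutation; together with item \ref{it:totaldiff} this is the claimed bidifferential. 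I would treat both as routine sign bookkeeping and not write out every index.

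For item \ref{it:degen} the point is to verify $\partial_n(D_n)\subseteq D_{n-1}$. Applying $\partial_n$ to a generator $s_{n-1;j}(x)$ and sorting the faces $d_{n;i}$ by the position of $i$ relative to $j$: the terms with $i<j$ turn into $s_{j-1}d_i(\cdots)$ by \eqref{eqn:simpl3} and those with $i>j+1$ into $s_j d_{i-1}(\cdots)$ by \eqref{eqn:simpl4}, all of which lie in $D_{n-1}$; the two surviving central terms combine as $(-1)^{j-1}\bigl(d_j s_j-d_{j+1}s_j\bigr)(x)$ and vanish by the weakly simplicial relation \eqref{eqn:simpl5}. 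Hence $D_*$ is a subcomplex of $(V_*,\partial)$.

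For item 4 I would invoke the classical Dold--Kan/normalization argument of \cite{Cyclic}: the genuine simplicial relation \eqref{eqn:simpl6} yields the direct-sum decomposition $V_*=N_*\oplus D_*$ together with an explicit contracting homotopy on $D_*$ assembled from the degeneracies, whence $D_*$ is acyclic and the projection $V_*\twoheadrightarrow N_*$ is a quasi-isomorphism. Finally, for item 5, applying item \ref{it:degen} to each of the two structures $(V_n,d_{n;i},s_{n;i})$ and $(V_n,d'_{n;i},s_{n;i})$ shows that $D_*$ is stable under both $\partial$ and $\partial'$, i.e.\ a sub-bicomplex, while the acyclicity in the genuinely bisimplicial setting follows from item 4 applied to either differential. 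The main obstacle is precisely item 4: items \ref{it:totaldiff}--\ref{it:degen} and the ``sub-bicomplex'' half of item 5 are purely formal, whereas the acyclicity of the degenerate part is the one step that requires a real homological-algebra input (the normalization theorem), which I would import rather than reprove.
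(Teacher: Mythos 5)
Your proposal is correct and matches the paper's intent: the paper offers no proof of its own, deferring to \cite{Cyclic}, and your sketch is exactly the standard argument found there (cancellation of the double sum for $\partial^2=0$, stability of $D_*$ via \eqref{eqn:simpl3}--\eqref{eqn:simpl5}, and the classical normalization theorem imported for the acyclicity statements). Your sign bookkeeping and case splits check out, so nothing needs to be changed.
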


\medskip

We will soon show that the bicomplexes from theorem \ref{thm:cuts} come from pre-bisimplicial structures. As for degeneracies, they arise from the following structure:

\begin{definition}\label{def:BraidedCoalg}
A pre-braided vector space $(V,\sigma)$ endowed with a comultiplication $\Delta:V\rightarrow V\otimes V$ is called \emph{a pre-braided coalgebra} if
\begin{itemize}
\item $\Delta$ is \emph{co-associative}:
\begin{equation}\label{eqn:CoAss}
(\Delta \otimes \Id_V) \circ \Delta = (\Id_V \otimes \Delta) \circ \Delta: V \rightarrow V\otimes V\otimes V,
\end{equation}
\item and $\Delta$ is \emph{compatible} with the pre-braiding:
\begin{align}
\Delta_2\circ\sigma &= \sigma_1 \circ \sigma_2 \circ\Delta_1:V^{\otimes 2} \rightarrow V^{\otimes 3},\label{eqn:BrCoalg}\\
\Delta_1\circ\sigma &= \sigma_2 \circ \sigma_1 \circ\Delta_2:V^{\otimes 2} \rightarrow V^{\otimes 3}.\label{eqn:BrCoalg'}
\end{align}
\end{itemize}

One talks about \emph{semi-pre-braided coalgebras} if only \eqref{eqn:BrCoalg} holds.

A (semi-)pre-braided coalgebra is called \emph{$\sigma$-cocommutative} if
\begin{equation}\label{eqn:sigma_cocomm}
\sigma\circ\Delta = \Delta:V \rightarrow V \otimes V.
\end{equation}
\end{definition}

The properties from the definition are graphically depicted as 
\begin{center}
\begin{tikzpicture}[scale=0.3]
\draw (0,0)--(2,-2);
\draw (4,0)--(2,-2);
\draw (3,-1)--(2,0);
\draw (2,-3)--(2,-2);
\fill[teal] (3,-1) circle (0.2);
\fill[teal] (2,-2) circle (0.2);
\node at (6,-1.5) {$=$};
\node at (7,-1.5) {};
\end{tikzpicture}
\begin{tikzpicture}[scale=0.3]
\draw (0,0)--(2,-2);
\draw (4,0)--(2,-2);
\draw (1,-1)--(2,0);
\draw (2,-3)--(2,-2);
\fill[teal] (1,-1) circle (0.2);
\fill[teal] (2,-2) circle (0.2);
\node at (5,-2.5) {,};
\node at (8,-1.5) {};
\end{tikzpicture}
\begin{tikzpicture}[scale=0.3]
 \draw[rounded corners] (0,-1) -- (0,0)-- (1,1)--(1,2);
 \draw[rounded corners] (0,0)-- (-1,1)--(-1,2);
 \fill[teal] (0,0) circle (0.2);
 \node at (3,1) {$=$};
 \node at (4,1) {};
\end{tikzpicture} \begin{tikzpicture}[scale=0.3]
 \draw[rounded corners] (0,-1) -- (0,0)-- (1,1)--(-1,2);
 \draw[rounded corners] (0,0)-- (-1,1)--(-0.2,1.4);
 \draw[rounded corners] (0.2,1.6)-- (1,2);
 \fill[teal] (0,0) circle (0.2);
 \node at (1,1.5) {$\sigma$};
 \node at (2,-0.5) {.};
\end{tikzpicture}
   \captionof{figure}{Coassociativity and $\sigma$-cocommutativity}\label{pic:Coass}
\end{center}
  \begin{center}
\begin{tikzpicture}[scale=0.4]
 \draw[rounded corners](0.65,0.65) -- (2,2)-- (2,2.5);
 \draw (0,0) -- (0.35,0.35);
 \draw (1,1) -- (1,2.5);
 \draw[rounded corners](1,0) -- (0,1)-- (0,2.5);
 \fill[teal] (1,1) circle (0.2);
 \node at (3,1) {$=$};
\end{tikzpicture}
\begin{tikzpicture}[scale=0.4]
 \draw[rounded corners](0,-0.5) -- (0,0)--(0.85,0.85);
 \draw (1.15,1.15) --   (2,2);
 \draw[rounded corners](0,-0.5) -- (0,1)--(0.35,1.35);
 \draw (0.65,1.65) --  (1,2);
 \draw[rounded corners] (1,-0.5) -- (1,1) -- (0,2);
 \fill[teal] (0.05,0) circle (0.2);
\end{tikzpicture}
\begin{tikzpicture}[scale=0.4]
 \node at (-3,1) {};
 \draw[rounded corners](1.65,0.65) -- (2,1)-- (2,2.5);
 \draw (1,0) -- (1.35,0.35);
 \draw (1,1) -- (1,2.5);
 \draw[rounded corners](2,0) -- (0,2)-- (0,2.5);
 \fill[teal] (1,1) circle (0.2);
 \node at (3,1) {$=$};
\end{tikzpicture}
\begin{tikzpicture}[scale=0.4]
 \draw[rounded corners](0,-0.5) -- (0,0)-- (0.85,0.85);
 \draw (1.15,1.15) -- (1.35,1.35);
 \draw (1.65,1.65) -- (2,2);
 \draw[rounded corners](2,-0.5) -- (2,1)-- (1,2);
 \draw[rounded corners](2,-0.5) -- (2,0)-- (0,2);
 \fill[teal] (2,0) circle (0.2);
 \node at (3,-0.5) {.};
\end{tikzpicture}
\captionof{figure}{Braided coalgebra}\label{pic:BrCoalg}
 \end{center}
Here the comultiplication $\Delta$ is represented as \begin{tikzpicture}[scale=0.3]
 \draw(0,-1) -- (0,0)-- (1,1);
 \draw (0,0)-- (-1,1);
 \fill[teal] (0,0) circle (0.2);
\end{tikzpicture}.

Theorem \ref{thm:cuts} admits now the following simplicial interpretation:

\begin{theorem}\label{thm:BraidedSimplHom}
Let $(V,\sigma)$ be a pre-braided vector space. 

\begin{enumerate}

\item For braided characters $\epsilon$ and $\zeta,$ the maps
\begin{align}
d_{n;i}(\overline{v})&:= \epsilon_1 \circ T_{p_{i,n}}^\sigma(\overline{v}),\label{eqn:braided_di}\\
d'_{n;i}(\overline{v})&:= \zeta_n \circ T_{p'_{i,n}}^\sigma(\overline{v}),\label{eqn:braided_ddi}
\end{align}
define a pre-bisimplicial structure on the tensor vector space $T(V).$ 
Here $p_{i,n} \in S_n$ (resp. $p'_{i,n} \in S_n$) is the permutation moving the $i$th element to the leftmost (resp. rightmost) position, and the notation $T_s^\sigma$ from \eqref{eqn:T_s} and \eqref{eqn:^sigma} is used.

\item The total differentials $\partial$ and $\partial'$ coincide with the ``shuffle'' differentials ${^\epsilon}\! d$ and, respectively, $d^{\zeta}$ from theorem \ref{thm:cuts}.

\item If the braided characters are moreover $\sigma$-compatible, then the $d_i$'s for $\epsilon$ and the $d_i$'s for $\zeta$ define a pre-bisimplicial structure on $T(V).$

\item If for a comultiplication $\Delta$ the triple $(V,\sigma,\Delta)$ is a pre-braided coalgebra, then the maps
\begin{equation}\label{eqn:braided_si}
s_{n;i} := \Delta_i
\end{equation}
complete the preceding structures into very weakly bisimplicial ones.

\item If $(V,\sigma,\Delta)$ is a semi-pre-braided coalgebra, then the data $(V^{\otimes n},d_{n;i},s_{n;i})$ described above give a very weakly simplicial vector space only.

\item If $\Delta$ is moreover {$\sigma$-cocommutative}, then the above structures are weakly (bi)simplicial.
\end{enumerate}

\end{theorem}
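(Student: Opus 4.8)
The plan is to read all six claims off the graphical calculus, interpreting the maps \eqref{eqn:braided_di}--\eqref{eqn:braided_si} as the elementary diagrams of Figure \ref{pic:SimplIntro}: $d_{n;i}$ braids the $i$th strand to the leftmost position along $T^\sigma_{p_{i,n}}$ and caps it with $\epsilon$; $d'_{n;i}$ braids the $i$th strand to the rightmost position along $T^\sigma_{p'_{i,n}}$ and caps it with $\zeta$; and $s_{n;i}=\Delta_i$ grafts a comultiplication node onto the $i$th strand. Each identity of Definition \ref{def:Simpl} then reduces to a planar isotopy plus one local move, and the whole task is to isolate that move. I would dispose of claim (2) first, as it only unwinds \eqref{eqn:cosh}: the summand of $\Csh^{1,n-1}$ sending the $i$th letter to the single left-hand factor is the lift $T^\sigma_{p_{i,n}}$ of $p_{i,n}$, a positive braid of length $i-1$, and since $\Csh$ is assembled from the negative pre-braiding $-\sigma$ it is weighted by $(-1)^{i-1}$; hence $\epsilon_1\circ\Csh^{1,n-1}=\sum_{i=1}^n(-1)^{i-1}d_{n;i}=\partial$, that is ${^\epsilon}\!d=\partial$. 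A parallel computation, combining the prefactor $(-1)^{n-1}$ with the length $n-i$ of $p'_{i,n}$ into $(-1)^{i-1}$, gives $d^\zeta=\partial'$.

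The core of claims (1) and (3) is the presimplicial family \eqref{eqn:simpl1} and the mixed family \eqref{eqn:simpl1'}. In $d_{n-1;i}\circ d_{n;j}$ with $i<j$ the two capped strands are $i$ and $j$; applying the YBE of Figure \ref{pic:YB} repeatedly I slide all crossings so that strands $i$ and $j$ meet in a single crossing lying just below the two caps, every remaining crossing being identical to one appearing in $d_{n-1;j-1}\circ d_{n;i}$. The two composites then differ only by that one crossing of the two capped strands, which is absorbed by the braided-character identity $(\epsilon\otimes\epsilon)\circ\sigma=\epsilon\otimes\epsilon$ of \eqref{eqn:Cut}; this proves \eqref{eqn:simpl1} for the $d_i$'s, and the same argument with $\zeta$ handles the $d'_i$'s. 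For the mixed relations \eqref{eqn:simpl1'} the cap $\epsilon$ and the cap $\zeta$ sit at opposite ends, the two capped strands cross exactly once in either order, and a direct count shows the crossing multisets coincide; YBE alone then equates the two composites, so no compatibility of characters is needed, which is exactly why claim (1) holds for arbitrary braided characters. In claim (3) both families are left-handed, so the surviving crossing is between an $\epsilon$-cap and a $\zeta$-cap, and it is removed instead by the $\sigma$-compatibility \eqref{eqn:compatCochar}, $(\epsilon\otimes\zeta)\circ\sigma=\zeta\otimes\epsilon$.

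For the degeneracies in claims (4)--(6) the relation \eqref{eqn:simpl2} splits into the disjoint case $i<j$, where the two $\Delta$-nodes lie on distinct strands and slide past one another by isotopy, and the case $i=j$, which is exactly coassociativity \eqref{eqn:CoAss}. Relation \eqref{eqn:simpl3} with $i<j$ is pure isotopy, since braiding strand $i$ leftward never meets the node on strand $j$. The only relation genuinely consuming the coalgebra axioms is \eqref{eqn:simpl4} with $j+1<i$: here the cut strand must pass the comultiplication node, and the equality ``cross-then-split $=$ split-then-cross-twice'' is precisely the braided-coalgebra compatibility \eqref{eqn:BrCoalg} of Figure \ref{pic:BrCoalg}. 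The left family $d_i$ invokes one of \eqref{eqn:BrCoalg}--\eqref{eqn:BrCoalg'} and the right family $d'_i$ the other, which is exactly why a semi-pre-braided coalgebra, carrying only \eqref{eqn:BrCoalg}, yields the very weakly simplicial structure of one family (claim (5)) but not the bisimplicial one (claim (4)). Finally, for claim (6) the extra relation \eqref{eqn:simpl5}, $d_is_i=d_{i+1}s_i$, compares capping the left versus the right output of the node $\Delta_i$; these differ by the crossing of the two new strands, removed by $\sigma$-cocommutativity \eqref{eqn:sigma_cocomm}, $\sigma\circ\Delta=\Delta$.

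I expect the main obstacle to be the presimplicial step of claims (1) and (3): organising the repeated YBE moves so that all spectator crossings manifestly match and only the single crossing of the two capped strands survives requires careful bookkeeping of the braid words $T^\sigma_{p_{i,n}}$ and $T^\sigma_{p'_{i,n}}$. It is here, rather than in the degeneracy relations, that the graphical calculus does the real work, and a clean presentation hinges on stating the crossing-count lemma for two strands routed to opposite ends precisely enough that the mixed relations \eqref{eqn:simpl1'} follow without any character identity.
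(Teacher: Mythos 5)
Your proposal is correct and takes the paper's own route: the published proof is precisely this graphical argument, but it only displays one case --- the relation $d_id_j=d_{j-1}d_i$ of \eqref{eqn:simpl1}, obtained by repeated YBE moves followed by a single application of the braided-character identity \eqref{eqn:Cut} (figure \ref{pic:GraphProofSimpl}) --- and declares all remaining relations analogous. Your sign bookkeeping for claim (2), the use of \eqref{eqn:compatCochar} in claim (3), the attribution of coassociativity to \eqref{eqn:simpl2}, of $\sigma$-cocommutativity to \eqref{eqn:simpl5}, and of \eqref{eqn:BrCoalg} to the left family versus \eqref{eqn:BrCoalg'} to the right family (which is exactly what separates claims (4) and (5)) all correctly fill in what that sketch leaves implicit.

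However, the statement you yourself call the hinge is misstated. For the mixed relations \eqref{eqn:simpl1'} with $i<j$ the two sub-cases behave differently. In $d_{i} d'_{j} = d'_{j-1} d_{i}$ the $\epsilon$-strand travels left and the $\zeta$-strand travels right, so the two capped strands never cross: the two composites have literally the same crossings, occurring in disjoint tensor positions, and agree by the interchange law alone --- no YBE is needed. In $d'_{i} d_{j} = d_{j-1} d'_{i}$ the capped strands do cross exactly once in each composite, but what makes YBE and isotopy sufficient is that the two crossings occur in the \emph{same} configuration: in both composites it is $\sigma$ applied to (strand $i$)$\,\otimes\,$(strand $j$), with the left output eventually capped by $\epsilon$ and the right output by $\zeta$. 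Your blanket phrase ``cross exactly once in either order'' is false in the first sub-case, and in the second sub-case it glosses over precisely the point that must be checked: if the two crossings sat in opposite configurations, equating them would force a compatibility between $\epsilon$ and $\zeta$, contradicting your (correct) conclusion that claim (1) holds for arbitrary braided characters. So the crossing-count lemma you propose to isolate should be stated as: the $\epsilon$-capped and $\zeta$-capped strands cross the \emph{same} number of times (zero or one) in both composites, and when they cross they do so in identical configurations.
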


The face and degeneracy maps from the theorem are graphically depicted as 
\begin{center}
\begin{tikzpicture}[scale=0.3]
 \draw (0,1) --(0,-1.6);
 \draw (0,-2.4) --  (0,-7);
 \node at (-4,-3) {$d_{n;i}=$};
 \draw[thick,red,rounded corners] (-1,0) -- (-1,-1) -- (4,-6) -- (4,-7);
 \fill[violet] (-1,0) circle (0.2);
 \node at (-1,0)  [above, violet] {$\epsilon$};
 \draw (3,1) -- (3,-4.6);
 \draw (3,-5.4) -- (3,-7);
 \draw (5,1) -- (5,-7);
 \draw (8,1) -- (8,-7);
 \node at (0.3,-2.3) [ above right] {$\sigma$};
 \node at (3.3,-5.3) [above right] {$\sigma$};
 \node at (0,-7) [below] {$\scriptstyle 1$};
 \node at (1.5,-7) [below] {$\scriptstyle\ldots$};
 \node at (3,-7) [below] {${\scriptstyle i-1}$}; 
 \node at (4,-7) [below] {$\scriptstyle i$};
 \node at (5,-7) [below] {$\scriptstyle i+1$};
 \node at (6.5,-7) [below] {$\scriptstyle \ldots$};
 \node at (8,-7) [below] {$\scriptstyle{n}$}; 
\end{tikzpicture}
\begin{tikzpicture}[scale=0.3]
 \draw (0,1) -- (0,-7);
 \node at (-9,-3) {};
 \node at (-4,-3) {$d'_{n;i}=$};
 \draw[thick,red,rounded corners] (9,0) -- (9,-1) -- (8.3,-1.7);
 \draw[thick,red,rounded corners] (7.7,-2.3) -- (5.3,-4.7);
 \draw[thick,red,rounded corners] (4.7,-5.3)--(4,-6) -- (4,-7);
 \fill[violet] (9,0) circle (0.2);
 \node at (9,0) [above,violet] {$\zeta$};
 \draw (3,1) -- (3,-7);
 \draw (5,1) -- (5,-7);
 \draw (8,1) -- (8,-7);
 \node at (5,-5) [right] {$\sigma$};
 \node at (8,-2) [right] {$\sigma$};
 \node at (0,-7) [below] {$\scriptstyle 1$};
 \node at (1.5,-7) [below] {$\scriptstyle \ldots$};
 \node at (3,-7) [below] {$\scriptstyle{i-1}$}; 
 \node at (4,-7) [below] {$\scriptstyle i$};
 \node at (5,-7) [below] {$\scriptstyle i+1$};
 \node at (6.5,-7) [below] {$\scriptstyle\ldots$};
 \node at (8,-7) [below] {$\scriptstyle{n}$}; 
\end{tikzpicture}
\end{center}
\begin{center}
\begin{tikzpicture}[scale=0.3]
 \node at (-7,1) {};
 \node at (-4,1) {$s_{n;i}=$};
 \draw (-1,0) -- (-1,2);
 \draw (0,0) -- (0,2);
 \draw (4,0) -- (4,2);
 \draw (3,0) -- (3,2);
 \draw[thick,red,rounded corners] (1.5,0) -- (1.5,1) -- (1,1.5) -- (1,2);
 \draw[thick,red,rounded corners] (1.5,1) -- (2,1.5) -- (2,2);
 \fill[teal] (1.5,1) circle (0.2);
 \node at (1.5,1) [right,teal] {$\scriptstyle \mathbf{\Delta}$};
 \node at (1.5,0) [below] {$\scriptstyle i$};
 \node at (5,0) {.};
\end{tikzpicture}
   \captionof{figure}{Simplicial structures}\label{pic:Simpl}
\end{center}

\begin{proof}
One has to deduce the ``simplicial'' relations (definition \ref{def:Simpl}) from the properties of the structures on $V,$ which were conceived precisely for these relations to hold. This can be done graphically, using the pictorial interpretation of face and degeneracy maps, and the graphical definitions of a ($\sigma$-cocommutaive) pre-braided coalgebra presented above, as well as the pictorial versions of the YBE (fig. \ref{pic:YB}) and of the definition of braided characters (fig. \ref{pic:BrCoChar}).

For instance,
\begin{center}
\begin{tikzpicture}[scale=0.3]
 \draw (0,0) --(0,3.7);
 \draw (0,4.3) --  (0,4.7);
 \draw (0,5.3) --  (0,6);
 \draw[thick,red,rounded corners] (1,0) -- (1,2.6);
 \draw[thick,red,rounded corners] (1,3.4) -- (1,4) -- (-0.5,5.5);
 \fill[violet] (-0.5,5.5) circle (0.2);
 \draw (2,0) --(2,1.5);
 \draw (2,2.5) -- (2,6);
 \draw[thick,red,rounded corners] (3,0) -- (3,1) -- (-1.5,5.5);
 \fill[violet] (-1.5,5.5) circle (0.2);
 \draw (4,0) -- (4,6);
 \node at (1,0) [below] {$\scriptstyle i$};
 \node at (3,0) [below] {$\scriptstyle j$};
 \node at (5.5,3) {$\stackrel{(1)}{=}$};
\end{tikzpicture}
\begin{tikzpicture}[scale=0.3]
 \draw (0,0) --(0,1.5);
 \draw (0,2.5) --  (0,3.5);
 \draw (0,4.5) --  (0,6);
 \draw[thick,red,rounded corners] (1,0) -- (1,1) -- (-0.5,2.5) -- (-0.5,4.1);
 \draw[thick,red,rounded corners] (-0.5,4.8) -- (-0.5,5.5);
 \fill[violet] (-0.5,5.5) circle (0.2);
 \draw (2,0) --(2,1.5);
 \draw (2,2.5) -- (2,6);
 \draw[thick,red,rounded corners] (3,0) -- (3,1) -- (-1,5) -- (-1,5.5);
 \fill[violet] (-1,5.5) circle (0.2);
 \draw (4,0) -- (4,6);
 \node at (1,0) [below] {$\scriptstyle i$};
 \node at (3,0) [below] {$\scriptstyle j$};
 \node at (5.5,3) {$\stackrel{(2)}{=}$};
\end{tikzpicture}
\begin{tikzpicture}[scale=0.3]
 \draw (0,0) --(0,1.5);
 \draw (0,2.5) --  (0,3.5);
 \draw (0,4.5) --  (0,6);
 \draw[thick,red,rounded corners] (1,0) -- (1,1) -- (-0.5,2.5) -- (-0.5,3);
 \fill[violet] (-0.5,3) circle (0.2);
 \draw (2,0) --(2,1.5);
 \draw (2,2.5) -- (2,6);
 \draw[thick,red,rounded corners] (3,0) -- (3,1) -- (-0.5,4.5) -- (-0.5,5);
 \fill[violet] (-0.5,5) circle (0.2);
 \draw (4,0) -- (4,6);
 \node at (1,0) [below] {$\scriptstyle i$};
 \node at (3,0) [below] {$\scriptstyle j$};
 \node at (5,0) {.};
\end{tikzpicture}
   \captionof{figure}{Graphical proof of $d_{i} d_{j} = d_{j-1} d_{i} \; \forall 1\le i < j \le n.$} \label{pic:GraphProofSimpl}
\end{center}
Here

(1) is a repeated application of YBE;

(2) follows from the definition \eqref{eqn:Cut} of a braided character (cf. fig. \ref{pic:BrCoChar}). 
\end{proof}

\begin{remark}
When checking the axioms of different types of simplicial structures in the proof, one can get rid of the tiresome index chasing by reasoning in terms of strands. For example, pulling a strand to the left commutes with applying the branching $\Delta$ to any other strand if a strand can pass over a branching.
\end{remark}

\subsection{Arrow and concatenation operations}\label{sec:HomOper}

The last face map $d_{n+1;n+1}$ on $V^{\otimes (n+1)}$ is of particular interest. It inspires the definition of a useful operation on $T(V)$:
\begin{definition}\label{def:ArrowOp}
Take a braided character $\epsilon$ on $(V,\sigma).$ For a $w \in V,$ we call an \emph{arrow operation} on $T(V)$ the map defined by
$${^\epsilon}\!\pi_w(\overline{v}):= d_{n+1;n+1}(\overline{v}w) =\epsilon_{1} \circ T_{p_{n+1,n+1}}^\sigma(\overline{v}w), \qquad \forall \overline{v} \in V^{\otimes n}.$$
\end{definition}

The notation and the name come from the graphical presentation:
\begin{center}
\begin{tikzpicture}[scale=0.4]
 \draw (0,0) -- (0,1.8);
 \draw (0,2.2) -- (0,3);
 \draw (1,0) -- (1,1.3);
 \draw (1,1.7) -- (1,3);
 \draw (2,0) -- (2,0.8);
 \draw (2,1.2) -- (2,3);
 \draw (3,0.5) -- (-1,2.5);
 \node at (-0.3,1.7) [above right]{$\sigma$};
 \node at (0.7,1.2) [above right]{$\sigma$};
 \node at (1.7,0.7) [above right]{$\sigma$};
 \fill[violet] (-1,2.5) circle (0.2);
 \fill[violet] (3,0.5) circle (0.2);
 \node at (3,0.5) [below,violet] {$w$};
 \node at (1,0) [below] {$V^{\otimes n}$};
 \node at (5,0) [below] {.};
 \node at (-1,2.5) [above, violet] {$\epsilon$};
\end{tikzpicture}
   \captionof{figure}{Arrow operation}\label{pic:NWArrow}
\end{center}

This map will be interpreted in terms of modules over pre-braided vector spaces and adjoint maps in proposition \ref{thm:adjoint_module}. Here we study its properties, and get a generalization of some homology operations from \cite{Prz2} and \cite{Prz1}. Our constructions are deeply inspired by those papers. 

 Start with some technical definitions:

\begin{definition}\label{def:NormCompat}
Take a pre-braided vector space $(V,\sigma).$

\begin{itemize}
\item
A \emph{normalized pair} is an element $w \in V$ and a co-element $\psi\in V^*$ satisfying $ \psi(w)=1.$
\item A $w\in V$ and a $\psi\in V^*$ are called \emph{right $\sigma$-compatible} if 
\begin{equation}\label{eqn:RightSigmaCompat} 
 (\Id_V \otimes \psi)\circ\sigma\circ (v \otimes w) = \psi(v)w \qquad \forall v \in V.
\end{equation}

\item The pre-braiding $\sigma$ is called \emph{natural} with respect to a $w\in V$ if
\begin{align}
 \sigma\circ (w \otimes v) &= v \otimes w \qquad \forall v \in V,\label{eqn:NatWRTSigma}\\
 \sigma\circ (v \otimes w) &= w \otimes v \qquad \forall v \in V,\label{eqn:NatWRTSigma'}
\end{align}
and \emph{semi-natural} (or \emph{demi-natural}) if only \eqref{eqn:NatWRTSigma} (resp. \eqref{eqn:NatWRTSigma'}) holds.
\end{itemize}
\end{definition}

The properties from the definition graphically mean
\begin{center}
\begin{tikzpicture}[scale=0.4]
 \draw (0,0) -- (0.8,0.8);
 \draw (1.2,1.2) -- (1.8,1.8);
 \draw (1.8,0.2) -- (0,2);
 \node at (1,1) [left]{$\sigma$};
 \fill[violet] (1.8,1.8) circle (0.1);
 \fill[violet] (1.8,0.2) circle (0.1);
 \node at (1.8,0.2) [right, violet] {$w$};
 \node at (1.8,1.8) [right, violet] {$\psi$};
 \node at (3,1) {$=$};
 \draw (4,0) -- (4,0.5);
 \draw (4,2) -- (4,1.5);
 \fill[violet] (4,0.5) circle (0.1);
 \fill[violet] (4,1.5) circle (0.1);
 \node at (4,1.5) [right, violet] {$w$};
 \node at (4,0.5) [right, violet] {$\psi$};
 \node at (5,0){,};
 \node at (8,0){};
\end{tikzpicture}
\begin{tikzpicture}[scale=0.7]
 \draw (1,0) -- (0.65,0.35); 
 \draw (0.35,0.65) -- (0,1); 
 \draw (0.2,0.2) -- (1,1); 
 \fill [violet] (0.2,0.2) circle (0.1);
 \node at (0.2,0.2) [below,violet] {$w$};
 \node at (1.5,0.5) {$=$};
 \draw (3,0) -- (2,1); 
 \draw (2.65,0.65) -- (3,1); 
 \fill [violet] (2.65,0.65) circle (0.1);
 \node at (3.5,0) {,};
 \node at (4.5,0){};
\end{tikzpicture}
\begin{tikzpicture}[scale=0.7]
 \draw (0,0) -- (0.35,0.35); 
 \draw (0.65,0.65) -- (1,1); 
 \draw (0.8,0.2) -- (0,1); 
 \fill [violet] (0.8,0.2) circle (0.1);
 \node at (0.8,0.2) [below,violet] {$w$};
 \node at (1.5,0.5) {$=$};
 \draw (2,0) -- (3,1); 
 \draw (2.35,0.65) -- (2,1); 
 \fill [violet] (2.35,0.65) circle (0.1);
 \node at (3.5,0) {.};
\end{tikzpicture}
   \captionof{figure}{Right $\sigma$-compatibility and naturality with respect to an element}
   \label{pic:NatElement}
\end{center}

The naturality can be interpreted as follows: the element $w$ can ``pass through a crossing'' to the left / to the right. 

Note that condition \eqref{eqn:NatWRTSigma'} implies \eqref{eqn:RightSigmaCompat} for any $\psi.$

The compatibility of arrow operations ${^\epsilon}\!\pi_w$ with the braided differential ${^{\epsilon}}\! d$ from theorem \ref{thm:cuts} is a consequence of theorem \ref{thm:BraidedSimplHom}, where we interpret ${^{\epsilon}}\! d$ as a total differential for which ${^\epsilon}\!\pi$ is the last face map. This is an inspiring remark for the following analysis of the behavior of our braided differentials with respect to arrow operations and \textbf{\textit{concatenation operations}}
$\ov \mapsto \ov w$ on $T(V),$ for a fixed $w \in V.$ 

\begin{proposition}\label{thm:hom_operations}
Let $(V,\sigma)$ be a pre-braided vector space with braided characters $\epsilon,\xi,\zeta,$ the first two being $\sigma$-compatible, and the last one being right $\sigma$-compatible with an element $w\in V.$ 
\begin{enumerate}
\item\label{item:HomOperators} The map ${^\epsilon}\!\pi_w$ is a bicomplex map for $(T(V),{^{\xi}}\! d, d^{\zeta}),$ i.e.
\begin{align*}
{^{\xi}}\! d \circ {^\epsilon}\!\pi_w (\ov) =& {^\epsilon}\!\pi_w\circ {^{\xi}}\! d(\ov),\\
d^{\zeta} \circ {^\epsilon}\!\pi_w (\ov) =& {^\epsilon}\!\pi_w \circ d^{\zeta}(\ov).
\end{align*}
\item\label{item:NWarrow} The following relations hold between the concatenation operations and the braided differentials:
\begin{align*}
{^{\epsilon}}\! d(\ov w) &= {^{\epsilon}}\! d(\ov) w + (-1)^n {^\epsilon}\!\pi_w (\ov),\\
d^{\zeta}(\ov w) &= d^{\zeta}(\ov) w  + (-1)^n \zeta(w) \ov.
\end{align*}
\item\label{item:NWarrowDemiNat} If $\sigma$ is demi-natural with respect to $w,$ then
$$ {^\epsilon}\!\pi_w = \epsilon(w) \Id_{T(V)}.$$
\end{enumerate}
Here the notation $\ov$ stays for any pure tensor in $V^{\otimes n}.$
\end{proposition}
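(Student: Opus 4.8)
The plan is to route the whole statement through the pre-bisimplicial description of theorem~\ref{thm:BraidedSimplHom}. For a character $\alpha\in\{\epsilon,\xi\}$ write ${}^{\alpha}\!d_{n;i}(\overline{v}):=\alpha_1\circ T^\sigma_{p_{i,n}}(\overline{v})$ for the left face maps and $d^\zeta_{n;i}(\overline{v}):=\zeta_n\circ T^\sigma_{p'_{i,n}}(\overline{v})$ for the right ones, so that ${}^{\alpha}\!d=\sum_i(-1)^{i-1}\,{}^{\alpha}\!d_{n;i}$ and $d^\zeta=\sum_i(-1)^{i-1}d^\zeta_{n;i}$. The key observation is that ${}^\epsilon\!\pi_w$ is the top left face map evaluated after right concatenation by $w$: for $\overline{v}\in V^{\otimes n}$ one has ${}^\epsilon\!\pi_w(\overline{v})={}^\epsilon\!d_{n+1;n+1}(\overline{v}w)$ by definition. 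Alongside this I will use two \emph{concatenation-factorisation} identities, valid for $1\le k\le n$:
\begin{align*}
{}^{\alpha}\!d_{n+1;k}(\overline{v}w)&={}^{\alpha}\!d_{n;k}(\overline{v})\,w, &
d^\zeta_{n+1;k}(\overline{v}w)&=d^\zeta_{n;k}(\overline{v})\,w.
\end{align*}
The left identity is immediate: dragging the $k$-th strand ($k\le n$) to the leftmost slot never meets the rightmost strand $w$. The right identity is the one genuine computation: pushing the $k$-th strand to the rightmost slot forces it to cross $w$ at the very end, and the right $\sigma$-compatibility \eqref{eqn:RightSigmaCompat} of $\zeta$ with $w$ is exactly what lets that last crossing be replaced by $\zeta$ evaluated on the braided strand, leaving $w$ in place. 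I expect this right-hand factorisation to be the main technical point, since it is the only place the hypothesis on $\zeta$ is used, and it propagates into parts~\ref{item:NWarrow} and~\ref{item:HomOperators}.

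Granting these, part~\ref{item:NWarrow} is sign bookkeeping on $V^{\otimes(n+1)}$. Expanding ${}^\epsilon\!d(\overline{v}w)=\sum_{i=1}^{n+1}(-1)^{i-1}\,{}^\epsilon\!d_{n+1;i}(\overline{v}w)$, the terms $i\le n$ factor as ${}^\epsilon\!d_{n;i}(\overline{v})\,w$ and reassemble into ${}^\epsilon\!d(\overline{v})\,w$, while the term $i=n+1$ is $(-1)^n\,{}^\epsilon\!\pi_w(\overline{v})$ by definition; this is the first relation. For $d^\zeta(\overline{v}w)$ the terms $i\le n$ factor as $d^\zeta_{n;i}(\overline{v})\,w$ via the right factorisation identity, while the term $i=n+1$ equals $(-1)^n\zeta(w)\overline{v}$ because $w$ already occupies the rightmost slot ($p'_{n+1,n+1}=\Id$, so $T^\sigma$ acts trivially); this is the second relation.

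For part~\ref{item:HomOperators} I will establish commutation one face map at a time and then combine with the alternating signs. Fix $k$. Using ${}^\epsilon\!\pi_w(\overline{v})={}^\epsilon\!d_{n+1;n+1}(\overline{v}w)$ and the mixed pre-bisimplicial relation of theorem~\ref{thm:BraidedSimplHom}, one commutes the cut past the top face. For the $\xi$-differential, theorem~\ref{thm:BraidedSimplHom}(3)---available because $\epsilon$ and $\xi$ are $\sigma$-compatible---gives ${}^\xi\!d_{n;k}\circ{}^\epsilon\!d_{n+1;n+1}={}^\epsilon\!d_{n;n}\circ{}^\xi\!d_{n+1;k}$, and applying the left factorisation identity to the right-hand side rewrites it as ${}^\epsilon\!\pi_w\circ{}^\xi\!d_{n;k}$, whence ${}^\xi\!d_{n;k}\circ{}^\epsilon\!\pi_w={}^\epsilon\!\pi_w\circ{}^\xi\!d_{n;k}$. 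For the $\zeta$-differential the same computation instead invokes the mixed relation of theorem~\ref{thm:BraidedSimplHom}(1) (no compatibility between $\epsilon$ and $\zeta$ is required) together with the right factorisation identity. Multiplying by $(-1)^{k-1}$ and summing over $k$ yields the two bicomplex-map relations. One could alternatively deduce these from part~\ref{item:NWarrow} and the anticommutativity of the differentials from theorem~\ref{thm:cuts}, but the face-by-face argument is cleaner and avoids the circular reappearance of ${}^\xi\!\pi_w$.

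Finally, part~\ref{item:NWarrowDemiNat} is the shortest. If $\sigma$ is demi-natural with respect to $w$ in the sense of \eqref{eqn:NatWRTSigma'}, then in the picture of ${}^\epsilon\!\pi_w$ (fig.~\ref{pic:NWArrow}) the strand $w$ passes unchanged through every crossing to the leftmost position, so that $T^\sigma_{p_{n+1,n+1}}(\overline{v}w)=w\otimes\overline{v}$; contracting the leftmost slot by $\epsilon_1$ gives $\epsilon(w)\overline{v}$, i.e. ${}^\epsilon\!\pi_w=\epsilon(w)\Id_{T(V)}$.
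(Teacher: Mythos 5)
Your proof is correct: the left factorisation identity indeed needs no hypothesis, the right one uses exactly the right $\sigma$-compatibility \eqref{eqn:RightSigmaCompat} of $\zeta$ with $w$, and the mixed pre-bisimplicial relations you invoke are available under the stated hypotheses (theorem \ref{thm:BraidedSimplHom}, parts 1 and 3, applied on $V^{\otimes(n+1)}$ with $i=k<j=n+1$). The route, however, is genuinely different from the paper's. For part \ref{item:HomOperators} the paper redoes a direct graphical/YBE computation ``in the same way as the proof of theorem \ref{thm:BraidedSimplHom}'', whereas you quote that theorem's pre-bisimplicial relations as a black box and combine them with your factorisation identities; this systematically carries out the remark the paper makes just before the proposition, namely that ${^\epsilon}\!\pi_w$ is the last face map of the total differential. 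For part \ref{item:NWarrow} the paper argues instead inside the pre-braided Hopf algebra $\ShCoalg_{-\sigma}(V)$: the relation ${^{\epsilon}}\! d(\ov w) = {^{\epsilon}}\! d(\ov)\, w + (-1)^n\, {^\epsilon}\!\pi_w(\ov)$ drops out in one stroke from the braided compatibility between the shuffle multiplication and the co-shuffle comultiplication, the sign coming from $T_{p_{n+1,n+1}}^{-\sigma} = (-1)^n T_{p_{n+1,n+1}}^{\sigma}$. Your face-by-face bookkeeping buys transparency: one sees precisely where each hypothesis enters (right $\sigma$-compatibility only in the right factorisation identity, $\sigma$-compatibility of $\epsilon,\xi$ only in the mixed simplicial relations), and parts \ref{item:HomOperators} and \ref{item:NWarrow} are unified under the same two lemmas. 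The paper's Hopf-algebraic argument buys conceptual economy and compactness --- the Leibniz-type rule of part \ref{item:NWarrow} is literally an instance of the bialgebra axiom, with no subscript chasing --- which is the ``co-shuffle'' side of the dictionary in table \ref{tab:2Approches}, while yours is the ``graphical/simplicial'' side.
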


This result admits an evident ``left'' version (with respect to $w$).

\begin{proof}

Point \ref*{item:HomOperators} follows, in the same way as the proof of theorem \ref{thm:BraidedSimplHom}, from the YBE for $\sigma$ and from the  $\sigma$-compatibilities (use for instance the graphical caclulus).

Point \ref*{item:NWarrow} can be checked using the pre-braided Hopf algebra structure on $\ShCoalg_{-\sigma}(V).$  For instance, for the left differentials one has
\begin{align*}
{^{\epsilon}}\! d(\ov w) & = (\epsilon \otimes \Id_n)\circ \Csh^{1,n} (\ov w)  \\
&\overset{(*)}{=}  (\epsilon \otimes \Id_n)  (\Csh^{1,n-1} (\ov) w + T_{p_{n+1,n+1}}^{-\sigma} (\ov w) )\\
& =  (\epsilon \otimes \Id_n)  (\Csh^{1,n-1} (\ov) w) + (-1)^n(\epsilon \otimes \Id_n) \circ T_{p_{n+1,n+1}}^{\sigma} (\ov w) \\
& = {^{\epsilon}}\! d(\ov) w + (-1)^n {^\epsilon}\!\pi_w(\ov).
\end{align*}
Equality $(*)$ is the compatibility between the multiplication and the comultiplication in $\ShCoalg_{-\sigma}(V).$

Point \ref*{item:NWarrowDemiNat} is straightforward.
\end{proof}

\begin{corollary}\label{crl:hom_operations}
\begin{enumerate}
\item In the settings of the previous proposition, the arrow operation ${^\epsilon}\!\pi_w$ is homotopic to zero on the complex $(T(V),{^{\epsilon}}\! d),$ and to $\zeta(w) \Id_{T(V)}$ on $(T(V),{^{\epsilon}}\! d-d^{\zeta}).$
\item The complex $(T(V),{^{\epsilon}}\! d)$ is acyclic if $\sigma$ is demi-natural with respect to $w$ and the pair $(w,\epsilon)$ is normalized. 
\item The complex $(T(V),d^{\zeta})$ is acyclic if the pair $(w,\zeta)$ is normalized. 
\end{enumerate}
\end{corollary}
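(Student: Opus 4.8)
The plan is to read all three statements off the two identities in point \ref*{item:NWarrow} of Proposition \ref{thm:hom_operations}, using a \emph{signed} concatenation map as a single contracting homotopy. Concretely, I would introduce the degree $+1$ operator $H\colon T(V)\to T(V)$ defined on $V^{\otimes n}$ by $H(\ov):=(-1)^n\,\ov w$, i.e.\ appending $w$ on the right and inserting the sign $(-1)^n$. This sign is chosen precisely to absorb the factor $(-1)^n$ sitting in front of ${^\epsilon}\!\pi_w(\ov)$ and of $\zeta(w)\ov$ in point \ref*{item:NWarrow}; without it the two homotopy identities would carry a degree-dependent sign and would fail to assemble into honest chain homotopies.

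For the first assertion of part 1 I would rewrite the first identity of point \ref*{item:NWarrow} as ${^{\epsilon}}\! d\circ H + H\circ{^{\epsilon}}\! d = {^\epsilon}\!\pi_w$ on $V^{\otimes n}$: here ${^{\epsilon}}\! d\,H(\ov)=(-1)^n\bigl({^{\epsilon}}\! d(\ov)w+(-1)^n{^\epsilon}\!\pi_w(\ov)\bigr)$ while $H\,{^{\epsilon}}\! d(\ov)=(-1)^{n-1}{^{\epsilon}}\! d(\ov)w$, and the two concatenation terms cancel since $(-1)^n+(-1)^{n-1}=0$. This exhibits $H$ as a homotopy between ${^\epsilon}\!\pi_w$ and $0$ on $(T(V),{^{\epsilon}}\! d)$; note that the homotopy identity alone already forces ${^\epsilon}\!\pi_w$ to be a chain map, consistent with point \ref*{item:HomOperators} applied with $\xi=\epsilon$ (a braided character being $\sigma$-compatible with itself). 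For the second assertion I would run the identical computation with the second identity of point \ref*{item:NWarrow}, obtaining $d^{\zeta}\circ H+H\circ d^{\zeta}=\zeta(w)\Id$, and then subtract to get $({^{\epsilon}}\! d-d^{\zeta})\circ H+H\circ({^{\epsilon}}\! d-d^{\zeta})={^\epsilon}\!\pi_w-\zeta(w)\Id$; the map ${^{\epsilon}}\! d-d^{\zeta}$ is a differential because $({^{\epsilon}}\! d,d^{\zeta})$ is a bidifferential by Theorem \ref{thm:cuts}.

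Parts 2 and 3 then drop out by combining the homotopy analysis with the normalization hypotheses. For part 3 the identity $d^{\zeta}H+Hd^{\zeta}=\zeta(w)\Id$ becomes $d^{\zeta}H+Hd^{\zeta}=\Id$ as soon as $(w,\zeta)$ is normalized, so $H$ contracts $(T(V),d^{\zeta})$ and the complex is acyclic. For part 2 I would invoke point \ref*{item:NWarrowDemiNat}: demi-naturality of $\sigma$ with respect to $w$ forces ${^\epsilon}\!\pi_w=\epsilon(w)\Id$, which equals $\Id$ under the normalization $\epsilon(w)=1$; together with the first assertion of part 1 this gives $\Id\simeq 0$ on $(T(V),{^{\epsilon}}\! d)$, hence acyclicity (a null-homotopic identity kills all homology, as any cycle $z$ satisfies $z={^{\epsilon}}\! d\,H z$).

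The computations are routine once $H$ is fixed, so I do not anticipate a serious obstacle. The only genuinely delicate point is the sign bookkeeping, namely checking that the $(-1)^n$ in $H$ cancels the degree-dependent signs of point \ref*{item:NWarrow} uniformly in $n$, including the boundary case $n=0$ where $V^{\otimes 0}=\k$ and the lower term $H\,{^{\epsilon}}\! d$ vanishes. I would therefore verify the two homotopy identities degree by degree to be safe, but expect no surprises.
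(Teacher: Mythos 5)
Your proposal is correct and is essentially the paper's own proof: the paper's entire argument reads ``The concatenation map $\ov \mapsto \ov w$ gives the demanded homotopies,'' which is exactly your $H$ up to the sign $(-1)^n$, combined with the same inputs (points 2 and 3 of proposition \ref{thm:hom_operations} and the normalization hypotheses). Your explicit sign bookkeeping actually sharpens the paper's terse statement, since the unsigned concatenation satisfies the homotopy identities only up to a degree-dependent sign, so the signed map is the one that genuinely does the job.
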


\begin{proof}
 The concatenation map $\ov \mapsto \ov w$ gives the demanded homotopies.
\end{proof}

\subsection{Loday's hyper-boundaries}\label{sec:hyper} 

Our quantum shuffle setting provides a natural interpretation for J.-L.Loday's hyper-boundaries (see \cite{Cyclic}, exercise E.2.2.7), which we redefine as generalizations of the ``shuffle'' differentials from theorem \ref{thm:cuts}. 

\begin{definition}
Let $(V,\sigma)$ be a pre-braided vector space with a braided character $\epsilon.$ The maps
\begin{align*}
V^{\otimes n} & \longrightarrow V^{\otimes (n-k)},\\
{^{\epsilon,(k)}}\! d : \overline{v} & \longmapsto (\epsilon^{\otimes k} \otimes \Id_{n-k}) \circ \Csh^{k,n-k}(\overline{v}), \\
d^{\epsilon,(k)} : \overline{v} & \longmapsto (-1)^{kn-\frac{k(k+1)}{2}} 
(\Id_{n-k}\otimes  \epsilon^{\otimes k} )\circ\Csh^{n-k,k}(\overline{v}) 
\end{align*}
are called \emph{hyper-boundaries} on $T(V).$
\end{definition}

The last sign should be understood as $(-1)^{n-1}(-1)^{n-2}\cdots (-1)^{n-k}.$

For $k=1$ one recovers the braided differentials ${^\epsilon}\! d$ and $d^{\epsilon}.$ 

The next step is to understand compositions of hyper-boundaries, generalizing  
$$d^{(1)}\circ d^{(1)} = 0={^{(1)}}\! d\circ {^{(1)}}\! d.$$ We start with a kind of a special case. This result seems to be well-known, but a written proof is difficult to find in literature. See for example \cite{Lebed}. 
\begin{lemma}
Consider a vector space $W$ and an element $w \in W.$ One has
$$w^{\otimes m} {\underset{-\tau}{\shuffle}} w^{\otimes k} =   \tbinom{m+k}{k}_{-1} w^{\otimes (m+k)},\qquad\qquad \text{} \quad \tbinom{m+k}{k}_{-1} := \begin{cases}
 0 & \text{if }mk\text{ is odd,} \\
 \binom{[(m+k)/2]}{[k/2]} & \text{otherwise.} 
\end{cases} $$
The brackets $[\: \cdot \: ]$ stand here for the lower integral part of a number.
\end{lemma}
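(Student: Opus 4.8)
For $w \in W$ and the shuffle product with respect to the signed flip $-\tau$,
$$w^{\otimes m} \underset{-\tau}{\shuffle} w^{\otimes k} = \tbinom{m+k}{k}_{-1} w^{\otimes(m+k)},$$
where the signed binomial coefficient vanishes when $mk$ is odd and equals $\binom{[(m+k)/2]}{[k/2]}$ otherwise.

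Let me understand the setup carefully.The plan is to compute the quantum shuffle product $w^{\otimes m} \underset{-\tau}{\shuffle} w^{\otimes k}$ directly from its definition \eqref{eqn:qu_sh}, exploiting that all tensor factors are equal to a single vector $w$. For the signed flip $-\tau$ on the one-dimensional space $\k w \subseteq W$, one has $-\tau(w \otimes w) = -w \otimes w$, so each generator $\sigma_i$ of $B^+_n$ acts on $w^{\otimes n}$ simply as multiplication by $-1$. Consequently, for any permutation $s \in S_{m+k}$ and its positive lift $T_s \in B^+_{m+k}$, the action is $T_s^{-\tau}(w^{\otimes(m+k)}) = (-1)^{\ell(s)} w^{\otimes(m+k)}$, where $\ell(s)$ is the length of $s$ (the number of crossings in a reduced word, equivalently the number of inversions). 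This is well-defined because $-\tau$ is a symmetric braiding, so the $B_n$-action factors through $S_n$ and the sign $(-1)^{\ell(s)} = \sign(s)$ depends only on $s$. Therefore the whole problem reduces to evaluating the signed sum
\begin{equation*}
\tbinom{m+k}{k}_{-1} = \sum_{s \in Sh_{m,k}} \sign(s),
\end{equation*}
and I must show this equals the stated case-by-case formula.

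First I would identify $\sum_{s\in Sh_{m,k}}\sign(s)$ as the $(-1)$-specialization of the Gaussian binomial coefficient. Indeed, the classical generating-function identity $\sum_{s\in Sh_{m,k}} q^{\ell(s)} = \binom{m+k}{k}_q$ (the $q$-binomial coefficient, since the length of a shuffle is the number of its inversions, and shuffles of type $(m,k)$ are exactly the minimal-length coset representatives for $S_m \times S_k \backslash S_{m+k}$) is standard. Setting $q = -1$ gives $\sum_{s\in Sh_{m,k}}\sign(s) = \binom{m+k}{k}_{-1}$, the $q$-binomial evaluated at $-1$. So the lemma is really the well-known evaluation of the Gaussian binomial at $q=-1$.

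Next I would establish that evaluation. The cleanest route is the $q$-Pascal recursion $\binom{m+k}{k}_q = \binom{m+k-1}{k-1}_q + q^{k}\binom{m+k-1}{k}_q$, specialized at $q=-1$, combined with an induction on $m+k$ matching it against the recursion satisfied by the right-hand side $\binom{[(m+k)/2]}{[k/2]}$ (with the parity-dependent vanishing when $mk$ is odd). Alternatively, and perhaps more transparently, I would use the factored form $\binom{n}{k}_q = \prod_{i=1}^{k}\frac{1-q^{n-k+i}}{1-q^i}$ and analyze the $q\to -1$ limit: the factors pair up so that even-indexed terms contribute ordinary integers while odd-indexed terms either cancel a zero against a zero (yielding the integer ratio that produces $\binom{[n/2]}{[k/2]}$) or leave an uncancelled zero in the numerator (forcing the coefficient to vanish), the latter being precisely the case $mk$ odd.

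The main obstacle I anticipate is the careful bookkeeping in this $q = -1$ specialization, particularly verifying the parity condition for vanishing. The subtlety is that $q = -1$ is a root of unity, so the product formula has $0/0$ indeterminacies that must be resolved by grouping consecutive factors; getting the floor functions $[(m+k)/2]$ and $[k/2]$ to emerge correctly, and confirming that the numerator retains exactly one surviving factor of $(1+q)\to 0$ precisely when both $m$ and $k$ are odd, requires a clean case analysis on the parities of $m$ and $k$. Once the reduction to $\sum_{s\in Sh_{m,k}}\sign(s) = \binom{m+k}{k}_{-1}$ is in place, this final arithmetic verification is the only genuine computation, and the inductive proof via the $q$-Pascal relation sidesteps most of the indeterminacy headaches.
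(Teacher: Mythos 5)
Your proposal is correct, but there is in fact nothing in the paper to compare it against: the paper states this lemma without proof, remarking that the result ``seems to be well-known, but a written proof is difficult to find in literature'' and deferring to the thesis \cite{Lebed}. On its own merits, your argument is sound. The reduction is exactly right: since $-\tau(w\otimes w)=-\,w\otimes w$, each braid generator acts on $w^{\otimes(m+k)}$ as multiplication by $-1$, so $T_s^{-\tau}$ acts as $(-1)^{\ell(s)}=\sign(s)$ (well defined since all reduced words of $s$ have the same length $\ell(s)$), and the lemma becomes the evaluation $\sum_{s\in Sh_{m,k}}\sign(s)=\binom{m+k}{k}_{q=-1}$. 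The generating-function identity $\sum_{s\in Sh_{m,k}}q^{\ell(s)}=\binom{m+k}{k}_q$ is indeed classical; one pedantic point is that the shuffles of the paper are the minimal-length representatives of the cosets $S_{m+k}/(S_m\times S_k)$ rather than $(S_m\times S_k)\backslash S_{m+k}$ as you wrote, but this is immaterial because $\ell(s)=\ell(s^{-1})$. Your preferred route for the $q=-1$ evaluation, the specialized $q$-Pascal rule $\binom{n}{k}_{-1}=\binom{n-1}{k-1}_{-1}+(-1)^k\binom{n-1}{k}_{-1}$ plus induction on $m+k$, does close: writing $F(m,k)$ for the right-hand side of the lemma, the four parity cases of $(m,k)$ all check out --- when $m$ and $k$ are both odd the two terms cancel, namely $\binom{(m+k)/2-1}{(k-1)/2}-\binom{(m+k)/2-1}{(k-1)/2}=0$; when exactly one of them is odd, one term vanishes and the other matches; and when both are even, ordinary Pascal gives $\binom{(m+k)/2}{k/2}$ --- with trivial base cases $m=0$ or $k=0$. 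As you yourself anticipate, this recursion route is preferable to taking $q\to-1$ in the product formula, which forces one to resolve $0/0$ indeterminacies; the induction avoids that entirely.
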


This lemma is crucial in the calculations giving
\begin{theorem}\label{thm:hyper}
Let $(V,\sigma)$ be a pre-braided vector space with a braided character $\epsilon.$ One has
$${^{\epsilon,(m)}}\! d \circ {^{\epsilon,(k)}}\! d = \tbinom{m+k}{k}_{-1} {^{\epsilon,(m+k)}}\! d,$$
$$d^{\epsilon,(m)} \circ d^{\epsilon,(k)} = \tbinom{m+k}{k}_{-1} d^{\epsilon,(m+k)}.$$
\end{theorem}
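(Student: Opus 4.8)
The plan is to reduce both identities to a single scalar computation, namely the evaluation of the binary co-shuffle $\Csh^{k,m}$ against $\epsilon^{\otimes(m+k)}$. The engine is the \emph{coassociativity} of $\Csh$, which holds because $\Csh$ is the comultiplication of the pre-braided Hopf algebra $\ShCoalg_{-\sigma}(V)$ (theorem \ref{thm:Sh} applied to the pre-braiding $-\sigma$). Concretely, both iterated deconcatenations agree with the ``triple'' co-shuffle $\Csh^{k,m,n-k-m}$, giving
$$(\Id_k\otimes\Csh^{m,n-k-m})\circ\Csh^{k,n-k}=(\Csh^{k,m}\otimes\Id_{n-k-m})\circ\Csh^{k+m,n-k-m}.$$

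First I would unfold the composition of the left hyper-boundaries. For $\overline{v}\in V^{\otimes n}$ the definition gives ${^{\epsilon,(m)}}\! d\circ{^{\epsilon,(k)}}\! d(\overline{v})=(\epsilon^{\otimes m}\otimes\Id_{n-k-m})\circ\Csh^{m,n-k-m}\circ(\epsilon^{\otimes k}\otimes\Id_{n-k})\circ\Csh^{k,n-k}(\overline{v})$. Since $\epsilon^{\otimes k}$ acts on the first block while $\Csh^{m,n-k-m}$ acts on the disjoint later block, the interchange law lets me slide them past each other, rewriting the right-hand side as $(\epsilon^{\otimes(m+k)}\otimes\Id_{n-k-m})\circ(\Id_k\otimes\Csh^{m,n-k-m})\circ\Csh^{k,n-k}(\overline{v})$. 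Coassociativity then regroups this as $\bigl((\epsilon^{\otimes(m+k)}\circ\Csh^{k,m})\otimes\Id_{n-k-m}\bigr)\circ\Csh^{k+m,n-k-m}(\overline{v})$, i.e. the definition of ${^{\epsilon,(m+k)}}\! d$ precomposed with the scalar operator $\epsilon^{\otimes(m+k)}\circ\Csh^{k,m}:V^{\otimes(m+k)}\to\k$. So everything reduces to the claim $\epsilon^{\otimes(m+k)}\circ\Csh^{k,m}=\tbinom{m+k}{k}_{-1}\,\epsilon^{\otimes(m+k)}$.

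To prove the claim I expand $\Csh^{k,m}=\sum_{s\in Sh_{k,m}}T_{s^{-1}}^{-\sigma}$ and feed each term to $\epsilon^{\otimes(m+k)}$. The braided-character relation \eqref{eqn:Cut} says $(\epsilon\otimes\epsilon)\circ\sigma=\epsilon\otimes\epsilon$, so $\epsilon^{\otimes(m+k)}\circ\sigma_i=\epsilon^{\otimes(m+k)}$ for every generator; hence each crossing is absorbed by the characters while the sign of $-\sigma$ contributes a factor $-1$. This forces $\epsilon^{\otimes(m+k)}\circ T_{s^{-1}}^{-\sigma}=(-1)^{\ell(s^{-1})}\epsilon^{\otimes(m+k)}=(-1)^{\ell(s)}\epsilon^{\otimes(m+k)}$, automatically independent of the chosen reduced word. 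Summing produces the signed shuffle count $\sum_{s\in Sh_{k,m}}(-1)^{\ell(s)}$, which is exactly $\tbinom{m+k}{k}_{-1}$: this identification is precisely the Lemma, read in a trivially pre-braided space where $\sigma=\tau$ and $w^{\otimes k}\ShTau w^{\otimes m}=\bigl(\sum_{s\in Sh_{k,m}}(-1)^{\ell(s)}\bigr)w^{\otimes(m+k)}$.

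Finally, the right-hand identity for $d^{\epsilon,(k)}$ follows by the mirror-symmetric argument applied to $\Csh^{n-k,k}$, together with a bookkeeping of the prefactors. Multiplying the two signs $(-1)^{kn-k(k+1)/2}$ and $(-1)^{m(n-k)-m(m+1)/2}$ gives exactly $(-1)^{(m+k)n-(m+k)(m+k+1)/2}$, the cross term $-mk$ cancelling, so the composite sign matches that of $d^{\epsilon,(m+k)}$; the same signed-shuffle binomial reappears (using $\tbinom{m+k}{k}_{-1}=\tbinom{m+k}{m}_{-1}$, valid since the two vanish together when $mk$ is odd). I expect the main obstacle to be organizational rather than conceptual: carefully tracking tensor-factor positions through the coassociativity regrouping, and verifying that the two distinct sign prefactors collapse as above. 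The character-plus-Lemma step, by contrast, is robust and uses no invertibility of $\sigma$.
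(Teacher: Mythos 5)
Your proposal is correct and follows essentially the same route as the paper's proof: coassociativity of the quantum co-shuffle to regroup the iterated deconcatenations, the braided-character relation to reduce the inner factor $\Csh^{k,m}$ evaluated against $\epsilon^{\otimes(m+k)}$ to a signed shuffle count, and the Lemma (in its dual/flip reading) to identify that scalar with $\tbinom{m+k}{k}_{-1}$. Your explicit sign bookkeeping for the right hyper-boundaries, including the check that the two prefactors multiply to $(-1)^{(m+k)n-(m+k)(m+k+1)/2}$, is a welcome verification of the mirror case the paper leaves to the reader.
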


\begin{proof}
We prove the first formula only.
By definition, 
$${^{\epsilon,(m)}}\! d \circ {^{\epsilon,(k)}}\! d (\overline{v}) = (\epsilon \otimes \cdots\otimes\epsilon\otimes \Id_{n-k-m})\circ(\epsilon \otimes \cdots\otimes\epsilon \otimes \Csh^{m,n-k-m})\circ \Csh^{k,n-k}(\overline{v}).$$
By the coassociativity of the quantum co-shuffle comultiplication, it equals
$$(\epsilon \otimes \cdots\otimes\epsilon\otimes \Id_{n-k-m})\circ(\Csh^{k,m} \otimes \Id_{n-k-m})\circ \Csh^{m+k,n-m-k}(\overline{v}).$$
Now $\epsilon$ is a braided character, so 
$$(\epsilon \otimes\epsilon)\circ \sigma = \epsilon \otimes\epsilon = (\epsilon \otimes\epsilon)\circ \tau,$$
thus $${^{\epsilon,(m)}}\! d \circ {^{\epsilon,(k)}}\! d (\overline{v}) = (\epsilon \otimes \cdots\otimes\epsilon\otimes \Id_{n-k-m})\circ({\underset{-\tau}{\cshuffle}}^{k,m} \otimes \Id_{n-k-m})\circ \Csh^{m+k,n-m-k}(\overline{v}).$$
The dual version of the previous lemma calculates
$$(\epsilon \otimes \cdots\otimes\epsilon)\circ {\underset{-\tau}{\cshuffle}}^{k,m} = \tbinom{m+k}{k}_{-1} \epsilon \otimes \cdots\otimes\epsilon,$$
and the previous expression becomes $\tbinom{m+k}{k}_{-1} {^{\epsilon,(m+k)}}\! d.$
\end{proof}

The relations from J.-L.Loday's exercise, which are particular cases of the above theorem for several values of $m$ and $k,$ are thus easily proved and generalized thanks to our quantum co-shuffle interpretation.

\section{Basic examples: familiar complexes recovered}\label{sec:examples}

Now we consider a $\k$-vector space (or an $\kk$-module) $V$ with some {algebraic structure}, and we look for a \textbf{\textit{pre-braiding $\sigma$ encoding the properties of this structure}}. Such pre-braidings are informally called \emph{structural}. Certain algebraic properties of the initial structure are coded by the \textbf{\textit{invertibility}} condition for the corresponding pre-braiding. In each case,  \textbf{\textit{braided characters}} are determined, always \underline{up to scalar multiples}, recovering the usual algebraic notions of characters. Theorem \ref{thm:cuts} then gives numerous bicomplex structures on $T(V).$ We calculate explicitly some of the differentials obtained this way, recognizing many familiar complexes. \textbf{\textit{Arrow operations}} are also considered, showing the triviality of some of the appearing homologies (cf. corollary \ref{crl:hom_operations}). In some cases, $V$ is endowed with a (semi-)pre-braided coalgebra structure, giving, according to theorem \ref{thm:BraidedSimplHom}, a (very) weakly bisimplicial structure on $T(V).$ The \textbf{\textit{comultiplications}} $\Delta$ we use always arise naturally from the original algebraic structure. 

A typical subsection of this section contains thus several lemmas, one for each question emphasized above, followed by propositions explicitly describing the bidifferential or simplicial structures obtained for ``interesting'' characters. Graphical calculus is extensively used.

\subsection{Vector spaces}

Following a nice mathematical tradition, the first example we consider is the trivial one: that of an ``empty'' structure. Take any vector space $V$ and the flip 
$$\tau : v\otimes w\longmapsto w\otimes v$$
 as its braiding. Each $\epsilon \in V^*$ is automatically a braided character. In particular,
$${^\epsilon}\! d = d^\epsilon : v_1 \ldots v_n \longmapsto \sum_{i=1}^n (-1)^{i-1} \epsilon(v_i) v_1 \ldots \widehat{v_i} \ldots v_n $$
gives the well-known \textit{\textbf{Koszul differential}}, in its simplest form.

Further, a ($\tau$-cocommutative) braided coalgebra structure on $(V,\tau)$ is precisely a (cocommutative) comultiplication in the usual sense. The corresponding very weakly simplicial structure on $T(V)$ is simplicial if and only if $\epsilon$ is the \emph{counit} for the comultiplication $\Delta.$ In the last case the cocommutativity is not necessary for the structure to be simplicial. Thus, according to theorem \ref{thm:BraidedSimplHom}, one can quotient the Koszul complex by the images of $s_{n;i}:=\Delta_i$ without changing the homology.

\subsection{Self-distributive structures}\label{sec:racks}

The simplest non-trivial example of a pre-braiding naturally coming from an algebraic structure is the following. Take a set $S$ with a binary operation $ \lhd : S \times S \rightarrow S.$ Define an application 
\begin{align}\label{eqn:RackBraid}
 \sigma = \sigma_\lhd :  S \times S & \longrightarrow S \times S, \notag\\
 (a,b) & \longmapsto (b,a \lhd b).
\end{align} 

It is very familiar to topologists, since it can be interpreted in terms of the fundamental group of the complement of a knot. See for instance the seminal paper \cite{Joyce}, or \cite{Kamada} for a very readable introduction. Graphically $\sigma_\lhd$ becomes
  \begin{center}
\begin{tikzpicture}[scale=0.5]
 \draw (1,0) -- (0,1);
 \draw (0,0) -- (0.4,0.4);
 \draw (0.6,0.6) -- (1,1);
 \node at (1,-0.5)  {$b$};
 \node at (0,-0.5)  {$a$};
 \node at (3,-0.5)  {.};
 \node at (0,1) [above] {$b$}; 
 \node at (1,1) [above right] {$a \lhd b$};
\end{tikzpicture}
   \captionof{figure}{Pre-braiding for shelves}\label{pic:BrShelf}
\end{center}

All the ``braided'' notions are to be understood in the \textbf{{set-theoretic sense}} in this subsection (cf. remark \ref{rmk:SetYB}).

The structure for which $\sigma_\lhd$ is a pre-braiding is well-known (see for instance \cite{Crans} or \cite{Lebed}):

\begin{lemma}\label{thm:shelf} 
The map $\sigma_\lhd$ is a pre-braiding if and only if $\lhd$ is \textbf{\emph{self-distributive}}:
\begin{equation}
  \tag{SD}
  (a\lhd b)\lhd c = (a\lhd c)\lhd(b\lhd c) \qquad \forall a,b,c \in S.
  \label{eqn:SelfDistr'}
\end{equation} 
\end{lemma}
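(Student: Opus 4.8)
The plan is to verify directly that the Yang--Baxter equation (YB) for $\sigma_\lhd$ holds exactly when the self-distributivity (SD) holds, by computing the action of both sides of (YB) on a generic triple $(a,b,c)\in S^{\times 3}$. Since everything is set-theoretic here, I would simply track where each of the three elements goes under the two composite maps $\sigma_1\circ\sigma_2\circ\sigma_1$ and $\sigma_2\circ\sigma_1\circ\sigma_2$, where $\sigma_i$ denotes $\sigma_\lhd$ applied to the $i$-th and $(i+1)$-th coordinates.

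First I would compute the left-hand side step by step. Starting from $(a,b,c)$, applying $\sigma_1$ gives $(b,a\lhd b,c)$; then $\sigma_2$ acts on the last two coordinates to give $(b,c,(a\lhd b)\lhd c)$; finally $\sigma_1$ gives $(c,b\lhd c,(a\lhd b)\lhd c)$. In parallel I would compute the right-hand side: from $(a,b,c)$, applying $\sigma_2$ gives $(a,c,b\lhd c)$; then $\sigma_1$ gives $(c,a\lhd c,b\lhd c)$; finally $\sigma_2$ gives $(c,b\lhd c,(a\lhd c)\lhd (b\lhd c))$. Comparing the two resulting triples, the first two coordinates agree automatically as $c$ and $b\lhd c$, so the equation (YB) holds for all $(a,b,c)$ if and only if the third coordinates agree, namely
$$(a\lhd b)\lhd c = (a\lhd c)\lhd (b\lhd c)\qquad\forall a,b,c\in S,$$
which is precisely (SD).

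For the reverse direction the same computation reads backwards: if (SD) holds then the third coordinates coincide and (YB) is satisfied, while if (SD) fails for some triple $(a,b,c)$ then the two composites disagree on that triple, so (YB) fails. Thus the two conditions are genuinely equivalent, not merely one implying the other. I would present this cleanly as a single side-by-side computation, perhaps mirroring the graphical Reidemeister-III picture (figure \ref{pic:YB}) with the labels $(a,b,c)$ carried along the strands, which makes the bookkeeping transparent and matches the paper's pictorial style.

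There is no real obstacle here beyond careful bookkeeping of the labels: the only subtle point is to apply $\sigma_\lhd$ with the correct orientation convention $(a,b)\mapsto (b,a\lhd b)$ at each crossing, and to make sure the indices $\sigma_1,\sigma_2$ act on the intended adjacent pair. Since no sign or linearity issues arise in the set-theoretic setting, the computation is purely combinatorial and the equivalence falls out immediately from matching the single surviving coordinate.
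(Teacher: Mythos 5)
Your computation is correct: both composites applied to $(a,b,c)$ agree in the first two coordinates, and the third coordinates are $(a\lhd b)\lhd c$ and $(a\lhd c)\lhd(b\lhd c)$ respectively, so (YB) holds precisely when (SD) does. The paper itself omits the proof of this lemma (citing \cite{Crans} and \cite{Lebed}), and your direct coordinate-tracking argument is exactly the standard one given in those references, so there is nothing to add.
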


\begin{definition}
A pair $(S,\lhd)$ satisfying \eqref{eqn:SelfDistr'} is called a \emph{shelf}, or a \emph{self-distributive system}. 
\end{definition}

\begin{lemma}
A map $f$ between two shelves $(S_1,\lhd_1)$ and $(S_2,\lhd_2)$ is a shelf morphism (i.e. $f(a \lhd_1 b) = f(a) \lhd_2 f(b)$ ) if and only if it is a braided morphism from $(S_1,\sigma_{\lhd_1})$ to $(S_2,\sigma_{\lhd_2}).$
\end{lemma}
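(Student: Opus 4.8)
The plan is to unwind both notions into pointwise statements and observe that they coincide term by term. Since we are in the set-theoretic setting (cf. remark \ref{rmk:SetYB}), a braided morphism $f:(S_1,\sigma_{\lhd_1})\to(S_2,\sigma_{\lhd_2})$ is by definition a map satisfying $(f\times f)\circ\sigma_{\lhd_1}=\sigma_{\lhd_2}\circ(f\times f)$ as maps $S_1\times S_1\to S_2\times S_2$. Two maps out of $S_1\times S_1$ are equal if and only if they agree on every pair, so the whole argument reduces to evaluating both composites on an arbitrary $(a,b)\in S_1\times S_1$ and comparing outputs.

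First I would compute each side directly from the defining formula $\sigma_\lhd(a,b)=(b,a\lhd b)$ for the shelf pre-braiding:
\begin{align*}
(f\times f)\circ\sigma_{\lhd_1}(a,b) &= (f\times f)(b,\,a\lhd_1 b) = \bigl(f(b),\,f(a\lhd_1 b)\bigr),\\
\sigma_{\lhd_2}\circ(f\times f)(a,b) &= \sigma_{\lhd_2}\bigl(f(a),f(b)\bigr) = \bigl(f(b),\,f(a)\lhd_2 f(b)\bigr).
\end{align*}
Comparing the two pairs, the first components agree unconditionally, while equality of the second components is exactly the identity $f(a\lhd_1 b)=f(a)\lhd_2 f(b)$. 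Hence, for a fixed pair $(a,b)$, the braided morphism equation holds precisely when the shelf morphism identity holds there; quantifying over all $(a,b)\in S_1\times S_1$ yields the claimed equivalence in both directions at once.

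There is essentially no obstacle here: the statement is an immediate consequence of the shape of $\sigma_\lhd$, whose first output component merely records $b$ and whose second records the product $a\lhd b$. The only point worth emphasizing is that the forced agreement of the first components is exactly what makes the single map-equation collapse onto the one scalar identity on the second components — this is the structural feature that lets $\sigma_\lhd$ ``encode'' the operation $\lhd$, and it is why morphisms transparently match up without any use of self-distributivity (the latter being needed only for $\sigma_\lhd$ to be a pre-braiding, by lemma \ref{thm:shelf}, not for this morphism comparison).
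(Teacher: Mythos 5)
Your proof is correct and is exactly the intended argument: the paper states this lemma without proof (deferring details to \cite{Lebed}) precisely because the pointwise computation you carry out — evaluating both composites on $(a,b)$ and noting that the first components agree automatically while the second components agree iff $f(a\lhd_1 b)=f(a)\lhd_2 f(b)$ — is immediate. Your closing remark that self-distributivity plays no role here (it is only needed for $\sigma_\lhd$ to satisfy the YBE, via lemma \ref{thm:shelf}) is also accurate and consistent with the paper's presentation.
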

These ``if and only if'' lemmas show that the pre-braiding $\sigma_\lhd$ \textbf{{encodes}} the defining property of a shelf, just as we wanted.

Fix a \underline{shelf} $(S,\lhd)$ until the end of this section.

\begin{lemma}
The pre-braiding $\sigma_\lhd$ is a braiding if and only if the application $a\mapsto a\lhd b$ is a bijection on $S$ for every $b \in S,$ that is if there exists an application $\widetilde{\lhd}: S \times S\longrightarrow S$ such that
\begin{equation}\label{eqn:Rack'}\tag{R}
  (a\lhd b)\widetilde{\lhd} b = (a\widetilde{\lhd} b)\lhd b = a \qquad \forall a,b \in S.
\end{equation} 
\end{lemma}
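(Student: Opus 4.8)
The statement asserts an equivalence between three a priori different conditions, so the plan is to factor it into the two genuinely distinct facts and prove each separately: (i) invertibility of $\sigma_\lhd$ as a map $S\times S\to S\times S$ is the same as bijectivity of all the right translations $R_b\colon a\mapsto a\lhd b$, $b\in S$; and (ii) this bijectivity is equivalent to the existence of an operation $\widetilde{\lhd}$ satisfying \eqref{eqn:Rack'}. Throughout, self-distributivity enters only implicitly, through lemma \ref{thm:shelf}, which guarantees that $\sigma_\lhd$ is a pre-braiding in the first place; the equivalence itself is a purely set-theoretic matter, and in the set-theoretic setting (cf. remark \ref{rmk:SetYB}) ``invertible'' simply means ``bijective''.

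For (i) I would exploit the rigid shape of $\sigma_\lhd(a,b)=(b,a\lhd b)$: the first output coordinate merely repeats the second input. Hence, for a prescribed target $(c,d)$, any preimage $(a,b)$ must satisfy $b=c$, after which $a$ is constrained only by $a\lhd c=R_c(a)=d$. This reduces solving $\sigma_\lhd(a,b)=(c,d)$ to solving $R_c(a)=d$, and exhibits a bijection between the preimages of $(c,d)$ under $\sigma_\lhd$ and the preimages of $d$ under $R_c$. Unwinding ``a unique preimage for every target'', I conclude that $\sigma_\lhd$ is a bijection of $S\times S$ exactly when each $R_c$ is a bijection of $S$.

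For (ii), the key observation is that condition \eqref{eqn:Rack'}, read coordinatewise in its first argument, says precisely that $\widetilde{\lhd}(\cdot,b)$ is a two-sided inverse of $R_b$: the equality $(a\lhd b)\widetilde{\lhd} b=a$ gives $\widetilde{\lhd}(\cdot,b)\circ R_b=\Id_S$, while $(a\widetilde{\lhd} b)\lhd b=a$ gives $R_b\circ\widetilde{\lhd}(\cdot,b)=\Id_S$. A map admitting a two-sided inverse is a bijection, so \eqref{eqn:Rack'} forces every $R_b$ to be bijective; conversely, if each $R_b$ is bijective I define $a\widetilde{\lhd} b:=R_b^{-1}(a)$, and \eqref{eqn:Rack'} then holds by construction. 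Combining this with (i) closes the equivalence, and as a by-product I would record the explicit inverse $\sigma_\lhd^{-1}(c,d)=(d\widetilde{\lhd} c,\,c)$, checked directly from the two halves of \eqref{eqn:Rack'}.

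There is no serious obstacle here; the one point requiring care is not to conflate injectivity and surjectivity of $R_b$ when $S$ is infinite. This is exactly why the formulation via a two-sided inverse $\widetilde{\lhd}$ is the right one: \eqref{eqn:Rack'} packages both one-sided inverses simultaneously, so no separate finiteness or cardinality argument is needed. The only remaining work — verifying that the composites $\sigma_\lhd\circ\sigma_\lhd^{-1}$ and $\sigma_\lhd^{-1}\circ\sigma_\lhd$ reduce to the two halves of \eqref{eqn:Rack'} — is routine.
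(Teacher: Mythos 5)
Your proof is correct. The paper states this lemma without proof (it is treated as a standard fact from rack theory), so there is no argument to compare against; your decomposition -- reading invertibility of $\sigma_\lhd$ off coordinatewise as bijectivity of the right translations $R_b\colon a\mapsto a\lhd b$, then identifying condition (R) as precisely the existence of a two-sided inverse for each $R_b$ -- is the natural and essentially unique argument, and your explicit formula $\sigma_\lhd^{-1}(c,d)=(d\,\widetilde{\lhd}\,c,\,c)$ is exactly the inverse the paper implicitly relies on later (e.g.\ where the inverse of the arrow operation is written via $a\mapsto a\,\widetilde{\lhd}\,b$).
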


\begin{definition}
A triple $(S,\lhd, \widetilde{\lhd})$ satisfying \eqref{eqn:SelfDistr'} and \eqref{eqn:Rack'} is called \emph{a rack}.
\end{definition}

Now \textit{\textbf{linearize}} a shelf $(S,\lhd)$: put $V:=\k S,$ where $\k$ is a field, and extend the braiding $\sigma_\lhd$ to $V$ linearly. One gets a pre-braided vector space $(V,\sigma_\lhd).$ 

\begin{lemma}\label{thm:ShelfCoChar}
Characters $\epsilon \in V^*$ are characterized by $\epsilon (a\lhd b) = \epsilon (a)$ forall $a,b \in S$ with $\epsilon(b) \neq 0.$
\end{lemma}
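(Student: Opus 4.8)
The plan is to unwind the defining condition \eqref{eqn:Cut} of a braided character directly on the canonical basis of $V \otimes V$. By definition $\epsilon$ is a braided character exactly when $(\epsilon \otimes \epsilon) \circ \sigma_\lhd = \epsilon \otimes \epsilon$ as linear maps $V \otimes V \to \k$. Since $V = \k S$ has basis $S$, the space $V \otimes V$ has basis $\{a \otimes b : a, b \in S\}$, and both sides of this identity are determined by their values on these basis vectors. So it suffices to evaluate the two maps on $a \otimes b$.

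First I would compute the left-hand side. Using $\sigma_\lhd(a \otimes b) = b \otimes (a \lhd b)$, one gets $(\epsilon \otimes \epsilon)(\sigma_\lhd(a \otimes b)) = \epsilon(b)\,\epsilon(a \lhd b)$, while the right-hand side is simply $(\epsilon \otimes \epsilon)(a \otimes b) = \epsilon(a)\,\epsilon(b)$. Hence the braided character condition is equivalent to $\epsilon(b)\,\epsilon(a \lhd b) = \epsilon(a)\,\epsilon(b)$ holding for all $a, b \in S$, which I would rewrite in the factored form $\epsilon(b)\,\bigl(\epsilon(a \lhd b) - \epsilon(a)\bigr) = 0$.

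It then remains to read off the claimed characterization from this factored identity. For the forward direction, if $\epsilon$ is a braided character the factored identity holds for every pair $(a,b)$, so whenever $\epsilon(b) \neq 0$ the second factor must vanish, giving $\epsilon(a \lhd b) = \epsilon(a)$. Conversely, assume $\epsilon(a \lhd b) = \epsilon(a)$ for all $a, b$ with $\epsilon(b) \neq 0$; then for an arbitrary pair the factored identity holds, because either $\epsilon(b) = 0$ (first factor vanishes) or $\epsilon(b) \neq 0$ (second factor vanishes by hypothesis). This recovers \eqref{eqn:Cut} on the basis, hence everywhere by linearity.

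There is essentially no genuine obstacle here: the statement is a direct translation of \eqref{eqn:Cut} through the explicit formula for $\sigma_\lhd$. The only point worth stressing is that the clause ``$\epsilon(b) \neq 0$'' is exactly what the factor $\epsilon(b)$ in the identity forces, so this restriction cannot be dropped, and correspondingly nothing is asserted about $\epsilon(a \lhd b)$ when $\epsilon(b) = 0$.
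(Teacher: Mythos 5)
Your proof is correct and is exactly the intended argument: the paper states this lemma without proof (deferring details to the thesis), and the evident route is precisely your direct evaluation of \eqref{eqn:Cut} on the basis $\{a\otimes b : a,b\in S\}$ of $\k S\otimes\k S$, yielding $\epsilon(b)\bigl(\epsilon(a\lhd b)-\epsilon(a)\bigr)=0$. Note that your forward direction silently uses that $\k$ has no zero divisors, which is exactly the point of the paper's follow-up remark that over a general ring $\kk$ the stated condition remains sufficient but is no longer necessary.
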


In the $\kk$-linear setting, this condition is sufficient but not necessary in general.

Here are some examples of braided characters:
\begin{example}
\begin{enumerate}
\item The linearization of
\begin{equation}\label{eqn:ShelfCounit}
\varepsilon : a \mapsto 1\; \forall a \in S 
\end{equation}
 is always a character.
\item The linearization of a \emph{\textbf{``Dirac map''}}
\begin{equation}\label{eqn:Dirac}
\varphi_a(b) := \delta_{a,b}= \begin{cases}
 1 & \text{if } b=a, \\
 0 & \text{for other } b \in S
\end{cases}
\end{equation}
(here $\delta_{a,b}$ is the Kronecker delta) is a character if and only if $a$ is idempotent and satisfies $b \lhd a \neq a$ for $b\neq a.$ In particular, if $S$ is a \emph{\textbf{quandle}}, i.e. a rack with idempotent elements:
\begin{equation}\label{eqn:Quandle'}\tag{Q}
  a\lhd a = a   \qquad \forall a \in S,
\end{equation} 
 then all the $\varphi_a$'s are characters. 
\end{enumerate}
\end{example}

We finish with a more conceptual construction of a class of braided characters. Recall that a character for an algebraic structure is usually defined as a morphism to the trivial structure. It is natural (having in mind the conjugation quandle) to define the \emph{\textbf{trivial shelf structure}} on a set $X$ by $x \lhd y = x$ for all $x,y\in X.$

\begin{definition}
A \emph{shelf character} for a shelf $(S,\lhd)$ is a shelf morphisms $\epsilon:S \rightarrow \k,$ where $\k$ is endowed with the trivial shelf structure. In other words, $\epsilon$ satisfies
$$ \epsilon (a\lhd b) = \epsilon (a) \qquad \forall a,b \in S. $$
\end{definition}  

Lemma \ref{thm:ShelfCoChar} then implies
\begin{lemma}
 The linearization of a shelf character is always a braided character for the pre-braiding $\sigma_\lhd.$
 Moreover, two braided characters coming from shelf characters are automatically $\sigma_\lhd$-compatible. 
\end{lemma}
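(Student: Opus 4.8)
The plan is to verify both claims by direct evaluation on the basis elements $a \otimes b$, $a,b \in S$, of $V \otimes V = \k S \otimes \k S$, since every map in sight is linear and $\sigma_\lhd$ is defined combinatorially on such tensors. The one fact that drives everything is that a shelf character satisfies the \emph{unconditional} identity $\epsilon(a \lhd b) = \epsilon(a)$ for all $a, b \in S$.

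First I would settle the single-character statement. Applying $\sigma_\lhd$ to $a \otimes b$ produces $b \otimes (a \lhd b)$, so
\[
(\epsilon \otimes \epsilon)\circ \sigma_\lhd (a \otimes b) = \epsilon(b)\,\epsilon(a \lhd b) = \epsilon(b)\,\epsilon(a) = \epsilon(a)\,\epsilon(b) = (\epsilon \otimes \epsilon)(a \otimes b),
\]
where the middle equality uses $\epsilon(a\lhd b) = \epsilon(a)$ and the commutativity of $\k$. By linearity this is exactly the defining identity \eqref{eqn:Cut}. Alternatively, one may simply note that the shelf-character identity is a stronger, unconditional instance of the condition in Lemma \ref{thm:ShelfCoChar}, so the linearization is a braided character for free.

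For the compatibility of two shelf characters $\epsilon, \zeta$, I would run the same computation with mixed labels. Evaluating on $a \otimes b$,
\[
(\epsilon \otimes \zeta)\circ \sigma_\lhd (a \otimes b) = \epsilon(b)\,\zeta(a \lhd b) = \epsilon(b)\,\zeta(a) = (\zeta \otimes \epsilon)(a \otimes b),
\]
and symmetrically $(\zeta \otimes \epsilon)\circ \sigma_\lhd (a \otimes b) = \zeta(b)\,\epsilon(a) = (\epsilon \otimes \zeta)(a \otimes b)$; these are precisely the two identities \eqref{eqn:compatCochar} defining $\sigma_\lhd$-compatibility.

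The computation is routine, so there is no genuine obstacle; the point worth emphasizing is \emph{why} the unconditional shelf-character hypothesis is essential for the second claim. The mixed computation requires $\zeta(a \lhd b) = \zeta(a)$ exactly when $\epsilon(b) \neq 0$, whereas the weaker characterization of Lemma \ref{thm:ShelfCoChar} only supplies this when $\zeta(b) \neq 0$. Consequently two arbitrary braided characters need not be $\sigma_\lhd$-compatible, and it is genuinely the fact that both arise from shelf characters — where the identity holds for \emph{every} $b$ — that forces compatibility.
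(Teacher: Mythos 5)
Your proof is correct and takes essentially the same route as the paper: the paper obtains this lemma directly from lemma \ref{thm:ShelfCoChar} (the unconditional identity $\epsilon(a\lhd b)=\epsilon(a)$ being a stronger instance of the characterization given there), and your explicit evaluation of \eqref{eqn:Cut} and \eqref{eqn:compatCochar} on basis elements $a\otimes b$ is precisely the routine verification this implication rests on. Your closing observation --- that the mixed compatibility genuinely needs the identity to hold for \emph{all} $b$, not just those with $\zeta(b)\neq 0$, so arbitrary braided characters need not be $\sigma_\lhd$-compatible --- is accurate and explains why the paper states the compatibility only for characters arising from shelf characters.
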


\medskip
The last ingredient we need is a comultiplication. The one proposed here is quite classical in the self-distributive world:
\begin{lemma}
Let $\Delta_D:V \rightarrow V \otimes V$ be the linearization of the \emph{{diagonal map}}
\begin{align*}
D:S &\longrightarrow S \times S,\\
a &\longmapsto (a,a) \qquad \forall a \in S.
\end{align*}
\begin{enumerate}
\item $(V,\sigma_\lhd,\Delta_D)$ is a semi-pre-braided coalgebra.
\item This coalgebra is pre-braided if and only if
$$a \lhd b = (a \lhd b ) \lhd b \qquad \forall a,b \in S.$$
\item The $\sigma_\lhd$-cocommutativity for  $\Delta_D$ is equivalent to \eqref{eqn:Quandle'}. 
\end{enumerate}
\end{lemma}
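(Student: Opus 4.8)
The plan is to verify each of the three assertions directly from the definitions (Definition \ref{def:BraidedCoalg}), using the explicit form of the linearized diagonal $\Delta_D(a) = a \otimes a$ and the pre-braiding $\sigma_\lhd(a \otimes b) = b \otimes (a \lhd b)$, working on basis elements $a, b \in S$ and extending linearly. Since everything is defined on a basis, the equations of linear maps I must check reduce to equalities of basis elements in $S \times S \times S$ (or $S \times S$), i.e. to set-theoretic identities in the shelf. I would lay out both sides of each structural equation as decorated strands, but the cleanest exposition is to compute the image of a single $a \otimes b$ under each composite and compare.

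First, for part (1) I compute both sides of the semi-compatibility \eqref{eqn:BrCoalg}, namely $\Delta_2 \circ \sigma = \sigma_1 \circ \sigma_2 \circ \Delta_1$ on $a \otimes b$. The left side gives $\Delta_2(b \otimes (a\lhd b)) = b \otimes (a \lhd b) \otimes (a \lhd b)$. The right side: $\Delta_1(a \otimes b) = a \otimes a \otimes b$, then $\sigma_2$ sends this to $a \otimes b \otimes (a \lhd b)$, and $\sigma_1$ sends that to $b \otimes (a\lhd b) \otimes (a\lhd b)$. These agree, so \eqref{eqn:BrCoalg} holds unconditionally, giving the semi-pre-braided coalgebra.

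For part (2) I compute the second compatibility \eqref{eqn:BrCoalg'}, $\Delta_1 \circ \sigma = \sigma_2 \circ \sigma_1 \circ \Delta_2$. The left side is $\Delta_1(b \otimes (a\lhd b)) = b \otimes b \otimes (a\lhd b)$. The right side: $\Delta_2(a \otimes b) = a \otimes b \otimes b$, then $\sigma_1$ gives $b \otimes (a\lhd b) \otimes b$, then $\sigma_2$ gives $b \otimes b \otimes ((a\lhd b)\lhd b)$. Comparing third components, \eqref{eqn:BrCoalg'} holds for all $a,b$ exactly when $a \lhd b = (a \lhd b)\lhd b$, which is the stated condition; this pins down precisely when the semi-pre-braided coalgebra is genuinely pre-braided. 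For part (3), $\sigma_\lhd$-cocommutativity \eqref{eqn:sigma_cocomm} reads $\sigma_\lhd \circ \Delta_D = \Delta_D$; on $a$ this is $\sigma_\lhd(a \otimes a) = a \otimes (a \lhd a)$ versus $a \otimes a$, so it holds for all $a$ iff $a \lhd a = a$, which is exactly the quandle condition \eqref{eqn:Quandle'}.

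I expect no genuine obstacle here: each claim is an equality of two composites of maps that are explicit on basis elements, so the whole proof is a short bookkeeping of how the triples $(a,a\lhd b, \ldots)$ transform, isolating in parts (2) and (3) the single component whose equality is nontrivial. The only point requiring mild care is tracking which tensor slot $\sigma$ and $\Delta$ act on (the subscript conventions of \eqref{eqn:phi_i}), and confirming that the derived conditions are not merely sufficient but necessary — which follows because $S$ is a basis, so an equality of the two linear maps forces the corresponding equality for every pair $a,b \in S$. The graphical calculus of Figures \ref{pic:Coass} and \ref{pic:BrCoalg} can replace these computations strand-by-strand if a pictorial proof is preferred.
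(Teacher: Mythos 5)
Your computations are correct and follow essentially the route the paper intends: this lemma is stated there without a written proof precisely because it reduces to the basis-element bookkeeping you carry out, and your verification of \eqref{eqn:BrCoalg}, \eqref{eqn:BrCoalg'} and \eqref{eqn:sigma_cocomm} on $a\otimes b$ (resp.\ on $a$), together with the observation that linear independence of pure tensors of basis elements makes the derived conditions necessary as well as sufficient, is exactly what is needed. One small omission: the definition of a (semi-)pre-braided coalgebra also includes coassociativity \eqref{eqn:CoAss} of $\Delta_D$, which you should state explicitly in part (1); it is immediate here, since both $(\Delta_D\otimes\Id_V)\circ\Delta_D$ and $(\Id_V\otimes\Delta_D)\circ\Delta_D$ send $a$ to $a\otimes a\otimes a$.
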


\begin{definition}
A shelf satisfying \eqref{eqn:Quandle'} is called a \emph{{spindle}}.
\end{definition}

\begin{remark}\label{rmk:ShelfDegenerateCx}
The image of the map $s_{n;i}:=(\Delta_D)_i$ is the linear span of the elements $(a_1,\ldots, a_{n+1}) \in S^{\times (n+1)}$ with $a_i = a_{i+1}.$
\end{remark} 

\medskip
It is now time to put together all the ingredients and to make some concrete calculations of bidifferentials. Start with the character $\varepsilon$ defined by \eqref{eqn:ShelfCounit}.

\begin{proposition}\label{thm:SRQHom}
Take a shelf $(S, \lhd).$
\begin{enumerate}
\item 
A bicomplex structure can be defined on $T(\k S)$ by
\begin{align*}
{^{\varepsilon}}\! d(a_1,\ldots, a_n) &= \sum_{i=1}^n (-1)^{i-1}((a_1\lhd a_i),\ldots,(a_{i-1}\lhd a_i),a_{i+1},\ldots, a_n),\\
d^{\varepsilon}(a_1,\ldots, a_n) &= \sum_{i=1}^n (-1)^{i-1} (a_1, \ldots\widehat{a_i}\ldots, a_n). 
\end{align*}

\item This bidifferential comes from a pre-bisimplicial structure given by
\begin{align*}
d_{n;i}(a_1,\ldots, a_n) &= ((a_1\lhd a_i),\ldots,(a_{i-1}\lhd a_i),a_{i+1},\ldots, a_n),\\
d'_{n;i}(a_1,\ldots, a_n) &= (a_1, \ldots\widehat{a_i}\ldots, a_n). 
\end{align*}

\item\label{it:SpindleSimpl} If our shelf is moreover a spindle, then $(T(\k S),d_{n;i},d'_{n;i}, s_{n;i}:=(\Delta_D)_i)$ is a weakly bisimplicial vector space. As a consequence, the linear span $C^D_*(S)$ of the elements $(a_1,\ldots, a_n)\in S^{\times n}$  with $a_i = a_{i+1}$ for one of the subscripts $i$ forms a sub-bicomplex of $(T(\k S), {^{\varepsilon}}\! d,d^{\varepsilon}).$ 
\end{enumerate}
\end{proposition}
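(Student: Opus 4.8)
The plan is to derive everything from Theorem \ref{thm:BraidedSimplHom} applied to the structural pre-braiding $\sigma_\lhd$, taking both braided characters equal to the constant character $\varepsilon$ of \eqref{eqn:ShelfCounit}; by the preceding lemmas $\varepsilon$ is indeed a braided character (and is $\sigma_\lhd$-compatible with itself). Part~1 of that theorem then yields, with no further hypotheses, a pre-bisimplicial structure $(T(\k S), d_{n;i}, d'_{n;i})$, and part~2 identifies the associated total differentials (Proposition \ref{thm:SimplBasics}) with ${^{\varepsilon}}\! d$ and $d^{\varepsilon}$. Thus statements (1) and (2) reduce to computing the abstract faces $d_{n;i}=\varepsilon_1\circ T_{p_{i,n}}^\sigma$ and $d'_{n;i}=\varepsilon_n\circ T_{p'_{i,n}}^\sigma$ explicitly for $\sigma=\sigma_\lhd$.

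First I would evaluate these braids on a basis tensor $(a_1,\ldots,a_n)$ by strand chasing. To compute $d_{n;i}$ one drags the $i$-th strand to the leftmost position; each time it crosses a strand $a_j$ with $j<i$ the rule $\sigma_\lhd(a,b)=(b,a\lhd b)$ replaces $a_j$ by $a_j\lhd a_i$ and leaves $a_i$ intact, so $T_{p_{i,n}}^\sigma(a_1,\ldots,a_n)=(a_i, a_1\lhd a_i,\ldots,a_{i-1}\lhd a_i, a_{i+1},\ldots,a_n)$; applying $\varepsilon_1$ removes the leading $a_i$ (as $\varepsilon\equiv 1$) and gives the stated $d_{n;i}$. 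For $d'_{n;i}$ one drags the $i$-th strand to the rightmost position; now the strands $a_j$ with $j>i$ are left unchanged while the travelling strand accumulates to $(\cdots(a_i\lhd a_{i+1})\cdots)\lhd a_n$; applying $\varepsilon_n$ deletes this last entry, leaving the plain face $d'_{n;i}(a_1,\ldots,a_n)=(a_1,\ldots,\widehat{a_i},\ldots,a_n)$. Summing $\sum_i(-1)^{i-1}d_{n;i}$ and $\sum_i(-1)^{i-1}d'_{n;i}$ reproduces the formulas of (1).

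For (3), the subtle point is that, by the lemma on $\Delta_D$, a spindle yields only a \emph{semi}-pre-braided coalgebra (i.e.\ only \eqref{eqn:BrCoalg} holds), which is $\sigma_\lhd$-cocommutative thanks to \eqref{eqn:Quandle'}; a nontrivial quandle already shows that \eqref{eqn:BrCoalg'} can fail, so one cannot simply quote the bisimplicial clause of Theorem \ref{thm:BraidedSimplHom}. I would therefore argue in two halves. Parts~5 and~6 of that theorem apply verbatim to the left faces and give that $(T(\k S), d_{n;i}, s_{n;i})$ with $s_{n;i}=(\Delta_D)_i$ is weakly simplicial. For the right faces, the explicit computation above shows that $d'_{n;i}$ is plain deletion and $s_{n;i}$ is plain duplication of the $i$-th tensor factor; hence $(T(\k S), d'_{n;i}, s_{n;i})$ is the classical deletion--duplication simplicial object, whose identities (in fact $d'_i s_i=\Id$, cf.\ \eqref{eqn:simpl6}) hold by elementary index bookkeeping and need no braiding hypothesis. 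Combining these two halves with the pre-bisimplicial relations from (2) yields a weakly bisimplicial structure.

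Finally, the consequence about $C^D_*(S)$ is immediate: Proposition \ref{thm:SimplBasics} guarantees that in a weakly bisimplicial vector space the degenerate subcomplex is a sub-bicomplex, and Remark \ref{rmk:ShelfDegenerateCx} identifies it with the span of the tuples having $a_i=a_{i+1}$ for some $i$, that is with $C^D_*(S)$. The main obstacle is exactly the semi- versus full-pre-braided gap just described: one must notice that the character $\varepsilon$ collapses the $\lhd$-decorations on the right faces into ordinary deletions, which is precisely what lets the right-hand simplicial relations survive despite the possible failure of \eqref{eqn:BrCoalg'}.
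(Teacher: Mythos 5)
Your proposal is correct and follows essentially the same route as the paper: points (1) and (2) by specializing theorems \ref{thm:cuts}/\ref{thm:BraidedSimplHom} to $(\sigma_\lhd,\varepsilon)$, and for point (3) using the semi-pre-braided, $\sigma_\lhd$-cocommutative coalgebra $(\k S,\sigma_\lhd,\Delta_D)$ to get the weakly simplicial structure on the left faces from the theorem, while checking the $d'$--$s$ compatibilities by hand. Your observation that $d'_{n;i}$ and $s_{n;i}$ are plain deletion and duplication, so the right-hand relations are the classical simplicial identities, is precisely the ``easy exercise'' the paper leaves to the reader, carried out explicitly.
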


\begin{proof}
Points 1 and 2 are direct applications of theorems \ref{thm:cuts} and \ref{thm:BraidedSimplHom} respectively to the pre-braiding $\sigma_\lhd$ from \eqref{eqn:RackBraid} and the character $\varepsilon,$  combined with the lemmas from this section. 

As for point \ref*{it:SpindleSimpl}, theorem \ref{thm:BraidedSimplHom} gives only a half of this assertion: $(T(\k S),d_{n;i},s_{n;i})$ is a weakly simplicial vector space, hence $C^D_*(S)$ is a subcomplex of $(T(\k S), \partial = {^{\varepsilon}}\! d).$ Since $(\k S,\sigma_\lhd,\Delta_D)$ is only a \underline{semi}-pre-braided coalgebra in general, the compatibilities \eqref{eqn:simpl3} between the $d'_{n;i}$'s and the $s_{n;i}$'s should be verified by hand, which is an easy exercise. Finally, the explicit description of the degenerate sub-bicomplex follows from remark \ref{rmk:ShelfDegenerateCx}.
\end{proof} 

Let us point out familiar complexes recovered in this proposition. We freely replace the field $\k$ by $\ZZ$ since, as noted above, all the constructions here work in the $\kk$-linear setting.

\begin{example}
\begin{enumerate}
\item The \textit{\textbf{rack homology}} (\cite{RackHom}) appears as 
$$C^R_*(S):=(T(\ZZ S),{^{\varepsilon}}\! d - d^{\varepsilon}).$$
\item The \textit{\textbf{shelf, or one-term distributive, homology}} (\cite{Prz1}, \cite{PrzSikora}) appears as 
$$C^{\lhd}_*(S):=(T(\ZZ S),{^{\varepsilon}}\! d).$$
\item  The \textit{\textbf{quandle homology}} (\cite{QuandleHom}) is the quotient 
$$C^Q_*(S) := C^R_*(S) / C^D_*(S).$$
\end{enumerate}
\end{example}

\medskip
The map ${^\epsilon}\!\pi$ takes the familiar {\textit{diagonal}} form here:
$${^\epsilon}\!\pi_b(a_1,\ldots, a_n)= (a_1 \lhd b, \ldots, a_n\lhd b).$$
Moreover, $\varepsilon$ is right $\sigma_\lhd$-compatible and forms a normalized pair with any $b \in S.$ Proposition \ref{thm:hom_operations} and corollary \ref{crl:hom_operations} are then applicable, recovering some results on homology operations from \cite{Prz2}, \cite{Prz1} and \cite{PrzSikora} and their consequences:

\begin{proposition}
\begin{enumerate}
\item The complex  $(T(\k S),d^{\varepsilon})$ is acyclic.
\item If there exists an element $b \in S$ such that the application $a\mapsto a\lhd b$ is a bijection on $S,$ then the complex  $(T(\k S), {^{\varepsilon}}\! d)$ is acyclic. 
\item If there exists an $a \in S$ stable by all the inner shelf morphisms, i.e.
\begin{equation}\label{eqn:FixedPoint}
a\lhd b = a \qquad \forall b \in S,
\end{equation}
then the complex  $(T(\k S), {^{\varepsilon}}\! d)$ is acyclic. 
\end{enumerate}
\end{proposition}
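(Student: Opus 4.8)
The plan is to derive all three acyclicity statements from Proposition~\ref{thm:hom_operations} and Corollary~\ref{crl:hom_operations}, applied to the character $\varepsilon$ of \eqref{eqn:ShelfCounit} and to a suitable element $w=b\in S$ (assumed nonempty). Throughout I would use three facts already available: $\varepsilon$ is $\sigma_\lhd$-compatible with itself (this is just its being a braided character); $\varepsilon(b)=1$ for every $b\in S$, so every pair $(b,\varepsilon)$ is normalized; and $\varepsilon$ is right $\sigma_\lhd$-compatible with any $b$, since $(\Id_V\otimes\varepsilon)\circ\sigma_\lhd\circ(v\otimes b)=b\,\varepsilon(v\lhd b)=\varepsilon(v)b$. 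Hence the full hypotheses of Proposition~\ref{thm:hom_operations} hold with $\epsilon=\xi=\zeta=\varepsilon$ and any $w=b$.

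For point (1) I would simply invoke Corollary~\ref{crl:hom_operations}(3): choosing any $b\in S$, the pair $(b,\varepsilon)$ is normalized, whence $(T(\k S),d^{\varepsilon})$ is acyclic. Concretely this is the contraction coming from right concatenation $\ov\mapsto\ov b$, since $d^{\varepsilon}(\ov b)=d^{\varepsilon}(\ov)b+(-1)^n\ov$ by Proposition~\ref{thm:hom_operations}(2); note this needs no hypothesis on $\lhd$ whatsoever.

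For point (2) the key observation is that the arrow operation has the diagonal form ${^\varepsilon}\!\pi_b(a_1,\ldots,a_n)=(a_1\lhd b,\ldots,a_n\lhd b)$. When $a\mapsto a\lhd b$ is a bijection of $S$, this is the linearization of a bijection of $S^{\times n}$, hence a graded \emph{isomorphism} of $T(\k S)$. By Proposition~\ref{thm:hom_operations}(1) (with $\epsilon=\xi=\varepsilon$) it is a chain endomorphism of $(T(\k S),{^{\varepsilon}}\! d)$, and by Corollary~\ref{crl:hom_operations}(1) it is null-homotopic there. A null-homotopic chain map induces the zero map on homology, while an invertible chain map induces an isomorphism; since ${^\varepsilon}\!\pi_b$ is simultaneously both, the identity of $H_*(T(\k S),{^{\varepsilon}}\! d)$ equals its own image under $({^\varepsilon}\!\pi_b)_*^{-1}\circ({^\varepsilon}\!\pi_b)_*=0$, forcing the homology to vanish. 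This ``null-homotopic $+$ invertible $\Rightarrow$ acyclic'' step is the one genuinely conceptual point, and I expect it to be the main thing to get right.

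For point (3), condition \eqref{eqn:FixedPoint} says precisely that $\sigma_\lhd$ is demi-natural with respect to $a$ in the sense of \eqref{eqn:NatWRTSigma}, because $\sigma_\lhd(a\otimes v)=(v,a\lhd v)=v\otimes a$; moreover $(a,\varepsilon)$ is normalized. I would then apply the \emph{left} version of Corollary~\ref{crl:hom_operations}(2) announced after Proposition~\ref{thm:hom_operations}, whose contracting homotopy is left concatenation $\ov\mapsto a\ov$. The only care required is that the fixed-point hypothesis furnishes \eqref{eqn:NatWRTSigma} and not \eqref{eqn:NatWRTSigma'}, so it is the left, not the right, concatenation that contracts ${^{\varepsilon}}\! d$; this is exactly reflected in the concrete identity ${^{\varepsilon}}\! d(a\ov)+a\,{^{\varepsilon}}\! d(\ov)=\ov$, which I would read off from the explicit formula of Proposition~\ref{thm:SRQHom}(1) using $a\lhd a_i=a$ to see that prepending $a$ survives each face map. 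This computation can stand on its own as the realization of the homotopy, independently of the general left-version statement.
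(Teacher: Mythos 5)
Your proof is correct and follows essentially the same route as the paper: point 1 is exactly corollary \ref{crl:hom_operations}(3); point 2 is the same ``null-homotopic plus invertible'' argument, the paper merely writing down the explicit inverse $(a_1,\ldots,a_n)\mapsto(a_1\wlhd b,\ldots,a_n\wlhd b)$; and point 3 rests on the same left-concatenation homotopy. Two caveats on point 3: in the paper's terminology \eqref{eqn:NatWRTSigma} is \emph{semi}-naturality (demi-naturality is \eqref{eqn:NatWRTSigma'}), and the mirrored (``left'') version of corollary \ref{crl:hom_operations}(2) actually yields acyclicity of $d^{\varepsilon}$ rather than of ${^{\varepsilon}}\!d$, since mirroring exchanges the two differentials along with exchanging \eqref{eqn:NatWRTSigma} and \eqref{eqn:NatWRTSigma'} --- the proper citation is the mirrored corollary \ref{crl:hom_operations}(3) via left $\sigma_\lhd$-compatibility, as the paper does --- but your self-contained verification of ${^{\varepsilon}}\!d(a\ov)+a\,{^{\varepsilon}}\!d(\ov)=\ov$ from the explicit formula closes this gap, so your argument is complete as written.
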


\begin{proof}
Point 1 follows from corollary \ref{crl:hom_operations} (point 3).

Point 2 follows from corollary \ref{crl:hom_operations} (point 1), since the arrow  operation ${^\epsilon}\!\pi_b,$ shown there to be homotopic to zero, is now invertible, the inverse given by
$(a_1,\ldots, a_n)\longmapsto (a_1 \wlhd b, \ldots, a_n\wlhd b),$ where $a\mapsto a\wlhd b$ denotes the map inverse to $a\mapsto a\lhd b.$

Point 3 follows from the ``right'' version of corollary \ref{crl:hom_operations} (point 3): condition \eqref{eqn:FixedPoint} means precisely that $a$ is left $\sigma_\lhd$-compatible with $\varepsilon.$ 
\end{proof}

Thus, the complex $(T(\k S), {^{\varepsilon}}\! d)$ is acyclic for a rack. However, it can be highly non-trivial for shelves (cf. (\cite{Prz1}, \cite{PrzSikora}).

\medskip

Further, let us turn to the characters given by Dirac maps \eqref{eqn:Dirac}. Theorem \ref{thm:BraidedSimplHom} applied to the pre-braiding $\sigma_\lhd$ and the character $\varphi_a$ gives

\begin{proposition}
Take a quandle $(S, \lhd, \wlhd)$ with a fixed element $a.$ A pre-bisimplicial structure can be given on $T(\k S)$ by
\begin{align*}
d_{n;i}(a_1,\ldots, a_n) &= \delta_{a,a_i}((a_1\lhd a_i),\ldots,(a_{i-1}\lhd a_i),a_{i+1},\ldots, a_n),\\
d'_{n;i}(a_1,\ldots, a_n) &= \delta_{a,(\cdots(a_i\lhd a_{i+1})\lhd \cdots )\lhd a_n}  (a_1, \ldots\widehat{a_i}\ldots, a_n). 
\end{align*}

Moreover, the face maps $d_{n;i}$ combined with degeneracies $s_{n;i}:=\Delta_i$ give a weakly simplicial structure.
\end{proposition}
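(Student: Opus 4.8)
The plan is to apply Theorem~\ref{thm:BraidedSimplHom} to the pre-braiding $\sigma_\lhd$ and the braided character $\epsilon = \zeta = \varphi_a$, and then to unfold the abstract formulas \eqref{eqn:braided_di}--\eqref{eqn:braided_si} for this concrete data by tracing the operators $T^{\sigma_\lhd}_{p_{i,n}}$ and $T^{\sigma_\lhd}_{p'_{i,n}}$ through the graphical calculus. Since $\varphi_a$ is a braided character whenever $S$ is a quandle (established above), Theorem~\ref{thm:BraidedSimplHom}(1) already guarantees that the maps $d_{n;i}$ and $d'_{n;i}$ assemble into a pre-bisimplicial structure; the remaining task is purely computational.

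First I would compute $d_{n;i}$. The permutation $p_{i,n}$ drags the $i$th strand to the leftmost position, making it cross the strands $i-1, i-2, \ldots, 1$ in turn. A single crossing acts by $\sigma_\lhd : (x,y) \mapsto (y,\, x \lhd y)$, so as $a_i$ travels leftward it passes \emph{unchanged} while each overtaken $a_j$ ($j<i$) is replaced by $a_j \lhd a_i$, the relative order of the $a_j$'s being preserved. Hence
\[
T^{\sigma_\lhd}_{p_{i,n}}(a_1,\ldots,a_n) = \bigl(a_i,\, a_1\lhd a_i,\ldots,a_{i-1}\lhd a_i,\, a_{i+1},\ldots,a_n\bigr),
\]
and applying $(\varphi_a)_1$ to the leading factor extracts the scalar $\delta_{a,a_i}$ and deletes that factor, yielding exactly the stated $d_{n;i}$.

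Symmetrically, $p'_{i,n}$ pushes the $i$th strand to the rightmost position, crossing $a_{i+1}, \ldots, a_n$. This time $\sigma_\lhd$ modifies the \emph{travelling} strand at each step: after passing $a_{i+1}$ it carries $a_i \lhd a_{i+1}$, then $(a_i \lhd a_{i+1}) \lhd a_{i+2}$, and so on, while each $a_j$ ($j>i$) is left untouched and shifted one place to the left. The trailing factor thus becomes $(\cdots(a_i \lhd a_{i+1}) \lhd \cdots) \lhd a_n$, and applying $(\varphi_a)_n$ to it produces the claimed $\delta$-factor together with $(a_1, \ldots, \widehat{a_i}, \ldots, a_n)$.

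Finally, for the weakly simplicial claim I would invoke points (5) and (6) of Theorem~\ref{thm:BraidedSimplHom}. We already know that $(\k S, \sigma_\lhd, \Delta_D)$ is a semi-pre-braided coalgebra, and that $\sigma_\lhd$-cocommutativity of $\Delta_D$ is equivalent to the idempotence axiom \eqref{eqn:Quandle'}; as $S$ is a quandle, \eqref{eqn:Quandle'} holds, so $\Delta_D$ is $\sigma_\lhd$-cocommutative. Because the coalgebra is only \emph{semi}-pre-braided, the theorem delivers a weakly simplicial (rather than bisimplicial) structure for the single datum $(T(\k S), d_{n;i}, s_{n;i} = (\Delta_D)_i)$, which is precisely the assertion. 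I expect the only delicate point to be the bookkeeping in the two braiding computations --- keeping straight that in $d_{n;i}$ the moving element remains fixed and acts on everything it passes, whereas in $d'_{n;i}$ it is the moving element itself that accumulates the iterated $\lhd$-action; everything else is a direct appeal to already-established results.
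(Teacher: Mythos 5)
Your proposal is correct and follows exactly the route the paper takes: the proposition is stated there as a direct application of Theorem~\ref{thm:BraidedSimplHom} to $\sigma_\lhd$ and the braided character $\varphi_a$ (with points~(5)--(6) and the $\sigma_\lhd$-cocommutativity of $\Delta_D$ under \eqref{eqn:Quandle'} handling the weakly simplicial claim, since the coalgebra is only semi-pre-braided). Your explicit strand-tracing of $T^{\sigma_\lhd}_{p_{i,n}}$ and $T^{\sigma_\lhd}_{p'_{i,n}}$, including which strand accumulates the $\lhd$-action in each case, correctly fills in the computation the paper leaves implicit.
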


The total differentials 
$${^{\varphi_a}}\! d(a_1,\ldots, a_n) = \sum_{i=1}^n (-1)^{i-1}\delta_{a,a_i}((a_1\lhd a_i),\ldots,(a_{i-1}\lhd a_i),a_{i+1},\ldots, a_n)$$
are called \textbf{\emph{partial derivatives}} and are denoted by $\dfrac{\partial^1}{\partial a}$ in \cite{Prz2}. Our general setting thus contains some results of \cite{Prz2}.

\begin{remark}
One can not talk about weakly \underline{bi}simplicial structure here, since the coalgebra $(\k S,\sigma_\lhd,\Delta_D)$ is only \underline{semi}-braided, and the compatibilities \eqref{eqn:simpl3} between the $d'_{n;i}$'s and the $s_{n;i}$'s, which are automatical for pre-braided coalgebras and happen to hold in proposition \ref{thm:SRQHom}, are no longer true for here. However, $C^D_*(S)$ is still a sub-bicomplex of $(T(\kk S),{^{\varphi_a}}\! d,d^{\varphi_a})$: indeed,
$d'_{n+1;i} \circ s_{n;j} (a_1,\ldots,a_n)$ is proportional to $s_{n-1;j-1}(a_1,\ldots,\widehat{a_i}, \ldots,a_n)$ and is thus still in the image of $s_{n-1;j-1} $ for all $1\le i < j \le n.$
\end{remark}

\medskip
We finish with an example where different characters are used on the right and on the left. This construction is inspired by \cite{TwistedQuandle}.

\begin{proposition}
 Take a shelf $(S, \lhd)$ and work with its linearization $\Lambda S,$ $\Lambda = \ZZ[T^{\pm 1}].$ The pre-braiding $\sigma_\lhd,$  combined with the shelf characters $\varepsilon$ and 
\begin{align*}
\epsilon_T : \Lambda S &\longrightarrow \Lambda,\\
a &\longmapsto T \quad \forall a \in S
\end{align*}
 define, via theorem \ref{thm:cuts}, a bicomplex structure on $T(\Lambda S)$ by
\begin{align*}
{^{\varepsilon}}\! d(a_1,\ldots, a_n) &= \sum_{i=1}^n (-1)^{i-1}((a_1\lhd a_i),\ldots,(a_{i-1}\lhd a_i), a_{i+1}, \ldots, a_n),\\
d^{\epsilon_T}(a_1,\ldots, a_n) &=\sum_{i=1}^n (-1)^{i-1} T (a_1, \ldots\widehat{a_i}\ldots, a_n),
\end{align*}
refined, as usual, to a pre-bisimplicial structure.
\end{proposition}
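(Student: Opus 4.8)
The plan is to present the statement as a direct consequence of theorems~\ref{thm:cuts} and~\ref{thm:BraidedSimplHom}, the only genuinely new input being the identification of $\epsilon_T$ as a legitimate braided character. First I would observe that $\epsilon_T$ is a shelf character: since it sends every basis element to the constant $T$, we have $\epsilon_T(a \lhd b) = T = \epsilon_T(a)$ for all $a,b \in S$, so by the lemma stating that linearizations of shelf characters are braided characters for $\sigma_\lhd$, the functional $\epsilon_T$ is a braided character. Equivalently, $\epsilon_T = T\,\varepsilon$, and multiplying a braided character by the unit $T \in \Lambda^*$ preserves~\eqref{eqn:Cut}, both sides of which simply scale by $T^2$. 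Thus $\varepsilon$ and $\epsilon_T$ are both braided characters for $\sigma_\lhd$.

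With this in hand, I would apply theorem~\ref{thm:cuts}(2) to the pair $(\epsilon,\zeta) = (\varepsilon,\epsilon_T)$. The ``left--right'' combination $(T(\Lambda S), {^\varepsilon}\!d, d^{\epsilon_T})$ demands only that both functionals be braided characters --- no $\sigma$-compatibility is required for this mixed pairing --- so the bidifferential, including the anticommutativity of the two differentials, is immediate.

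It then remains to match the two differentials with the displayed formulas. The left differential ${^\varepsilon}\!d$ involves the same character $\varepsilon$ as proposition~\ref{thm:SRQHom}, so I would simply cite that formula. For the right differential I would use that $\zeta \mapsto d^\zeta$ is linear in the character: in $d^\zeta(\overline{v}) = (-1)^{n-1}\zeta_n \circ \Csh^{n-1,1}(\overline{v})$ the functional $\zeta$ enters only through $\zeta_n = \Id_{n-1}\otimes\zeta$, while $\Csh$ depends on $\sigma$ alone. Hence $d^{\epsilon_T} = d^{T\varepsilon} = T\,d^{\varepsilon}$, and substituting the formula for $d^{\varepsilon}$ from proposition~\ref{thm:SRQHom} yields exactly $\sum_{i=1}^n (-1)^{i-1} T\,(a_1, \ldots, \widehat{a_i}, \ldots, a_n)$, confirming both displays.

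Finally, the pre-bisimplicial refinement is furnished verbatim by theorem~\ref{thm:BraidedSimplHom}(1)--(2): the face maps $d_{n;i} = \varepsilon_1 \circ T^\sigma_{p_{i,n}}$ and $d'_{n;i} = (\epsilon_T)_n \circ T^\sigma_{p'_{i,n}}$ form a pre-bisimplicial structure whose total differentials are precisely ${^\varepsilon}\!d$ and $d^{\epsilon_T}$; tracing a single strand through $\sigma_\lhd$ as in proposition~\ref{thm:SRQHom} gives $d_{n;i} = ((a_1 \lhd a_i), \ldots, (a_{i-1}\lhd a_i), a_{i+1}, \ldots, a_n)$ and $d'_{n;i} = T\,(a_1, \ldots, \widehat{a_i}, \ldots, a_n)$. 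I do not expect a serious obstacle here: the only points needing care are purely bookkeeping --- verifying that passing from a field to the ring $\Lambda$ does not damage the ``shelf character $\Rightarrow$ braided character'' implication (it does not, since only that sufficient direction is used), and checking that the factor $T$ and the alternating signs sit in the stated positions. No computation beyond what proposition~\ref{thm:SRQHom} already contains is involved.
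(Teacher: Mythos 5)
Your proof is correct and takes essentially the same route as the paper, which treats this proposition as a direct application of theorems \ref{thm:cuts} and \ref{thm:BraidedSimplHom} to the pair of shelf characters $\varepsilon$ and $\epsilon_T$ (both braided characters by the lemma on linearized shelf characters, whose sufficiency direction indeed survives the passage from $\k$ to $\Lambda$), with the explicit formulas obtained as in proposition \ref{thm:SRQHom}. Your observation that $\epsilon_T = T\varepsilon$, hence $d^{\epsilon_T} = T\, d^{\varepsilon}$, is a harmless shortcut replacing the direct strand computation of the $d'_{n;i}$'s.
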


The differential ${^{\varepsilon}}\! d - d^{\epsilon_T}$ defines the \textbf{\textit{twisted rack homology}} from \cite{TwistedQuandle}.

\subsection{Associative algebras}\label{sec:bar}

Now take a $\k$-vector space $V$ endowed with a bilinear operation $\mu : V\otimes V \rightarrow V$ and a distinguished element $\one \in V,$ sometimes regarded as a linear map 
\begin{align*}
\nu:\k &\longrightarrow V, \\
 \alpha &\longmapsto \alpha\one.
\end{align*} 
In this subsection we construct quite an exotic non-invertible pre-braiding on $V$ which encodes the associativity of $\mu,$ and complete it with an exotic comultiplication.
 
 \medskip

Consider the bilinear application 
\begin{align}\label{eqn:AssBraid}
 \sigma = \sigma_\mu : V \otimes V & \longrightarrow V \otimes V, \notag\\
 v\otimes w & \longmapsto \one \otimes \mu(v\otimes w)
\end{align} 
or, graphically,
  \begin{center}
\begin{tikzpicture}[scale=0.5]
 \draw (0,0) -- (1,1);
 \draw (1,0) -- (0.5,0.5);
 \draw (0.3,0.7) -- (0,1);
 \node at (0.5,0.5) [right] {$\mu$};
 \node at (0.3,0.7) [left] {$\nu$}; 
 \node at (1,0) [below] {$w$};
 \node at (0,0) [below] {$v$};
 \node at (0,1) [above] {$\one$}; 
 \node at (1,1) [above right] {$\mu(v\otimes w)$};
 \fill[orange] (0.3,0.7) circle (0.1);
 \fill[teal] (0.5,0.5) circle (0.1);
 \node at (2,0) {.};
\end{tikzpicture}
   \captionof{figure}{Pre-braiding for associative algebras }\label{pic:BrAss}
 \end{center}

\begin{lemma} \label{thm:Bar} 
Suppose that $\one$ is a right unit for $\mu,$ i.e. 
$\mu(v \otimes \one) = v \; \forall v \in V.$
 Then the map $\sigma_\mu$ is a pre-braiding if and only if $\mu$ is associative on $V,$ i.e.
 \begin{equation}\label{eqn:ass}\tag{Ass}
\mu(\mu(v\otimes w)\otimes u)=\mu(v\otimes \mu(w\otimes u)) \qquad\forall v,w,u \in V,
\end{equation}
\begin{center}
\begin{tikzpicture}[scale=0.2]
\draw (0,0)--(2,2);
\draw (4,0)--(2,2);
\draw (1,1)--(2,0);
\draw (2,3)--(2,2);
\fill[teal] (1,1) circle (0.2);
\fill[teal] (2,2) circle (0.2);
\node at (6,1.5) {$=$};
\node at (7,1.5) {};
\end{tikzpicture}
\begin{tikzpicture}[scale=0.2]
\draw (0,0)--(2,2);
\draw (4,0)--(2,2);
\draw (3,1)--(2,0);
\draw (2,3)--(2,2);
\fill[teal] (3,1) circle (0.2);
\fill[teal] (2,2) circle (0.2);
\node at (5,0.5) {.};
\end{tikzpicture}
   \captionof{figure}{Associativity }\label{pic:Ass}
\end{center}
\end{lemma}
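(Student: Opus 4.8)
The plan is to verify the Yang--Baxter equation \eqref{eqn:YB} for $\sigma_\mu$ directly, by evaluating both of its sides on a generic pure tensor $v\otimes w\otimes u\in V^{\otimes 3}$ and comparing the outcomes. Throughout I write $vw:=\mu(v\otimes w)$ for the product, and I use $\sigma_1=\sigma_\mu\otimes\Id_V$ and $\sigma_2=\Id_V\otimes\sigma_\mu$. The first thing I would record are the two consequences of the right-unit hypothesis $\mu(v\otimes\one)=v$ that drive the whole computation: the identity $\sigma_\mu(v\otimes\one)=\one\otimes v$ for all $v$, and its special case $v=\one$, namely $\sigma_\mu(\one\otimes\one)=\one\otimes\one$. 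These are exactly the simplifications that make each of the two triple composites collapse.

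Next I would push $v\otimes w\otimes u$ through each side. Since every application of $\sigma_\mu$ replaces the two components it touches by a $\one$ together with their product, tracking only the surviving product one finds
$$\sigma_1\sigma_2\sigma_1(v\otimes w\otimes u)=\one\otimes\one\otimes (vw)u,\qquad \sigma_2\sigma_1\sigma_2(v\otimes w\otimes u)=\one\otimes\one\otimes v(wu),$$
where on the left side the final $\sigma_1$ is absorbed by $\sigma_\mu(\one\otimes\one)=\one\otimes\one$, and on the right side the middle $\sigma_1$ is simplified by $\sigma_\mu(v\otimes\one)=\one\otimes v$. Thus \eqref{eqn:YB}, evaluated on $v\otimes w\otimes u$, is precisely the assertion that $\one\otimes\one\otimes(vw)u=\one\otimes\one\otimes v(wu)$ in $V^{\otimes 3}$.

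From this the ``if'' direction is immediate: associativity \eqref{eqn:ass} gives $(vw)u=v(wu)$, so the two composites coincide and the YBE holds. For the converse one must strip off the common left factor $\one\otimes\one$, and this cancellation is the only point requiring care — the closest thing to an obstacle. The map $x\mapsto\one\otimes\one\otimes x$ is injective as soon as $\one\neq 0$, which over a field follows by completing $\one$ to a basis of $V$. The degenerate possibility $\one=0$ is harmless, since then the right-unit axiom forces $v=\mu(v\otimes\one)=0$ for every $v$, so $V=0$ and associativity holds vacuously. Hence the YBE implies $(vw)u=v(wu)$ for all $v,w,u$, that is \eqref{eqn:ass}, completing the equivalence.

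Finally, I would note that the whole argument can equally be read off the graphical calculus, which is the presentation most in keeping with the rest of the paper: both sides of the Reidemeister~III move (fig.~\ref{pic:YB}) simplify, via the two ``$\nu$ slides through a crossing'' identities coming from the right unit, to a single binary tree with two unit inputs feeding a left- versus right-bracketed product, turning fig.~\ref{pic:YB} into fig.~\ref{pic:Ass}. I expect no obstacle there beyond the same need for $\one\neq 0$ when passing from the diagrammatic identity back to the algebraic relation.
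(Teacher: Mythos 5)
Your proof is correct and is essentially the paper's own argument: the paper's proof (fig.~\ref{pic:ProofBar}) is precisely the graphical form of your computation, pushing $v\otimes w\otimes u$ through both sides of \eqref{eqn:YB} and using $\sigma_\mu(\one\otimes\one)=\one\otimes\one$ and $\sigma_\mu(v\otimes\one)=\one\otimes v$ to reduce the comparison to $\one\otimes\one\otimes\mu(\mu(v\otimes w)\otimes u)=\one\otimes\one\otimes\mu(v\otimes\mu(w\otimes u))$. Your explicit justification of the cancellation of the factor $\one\otimes\one$ in the ``only if'' direction (injectivity of $x\mapsto\one\otimes\one\otimes x$ over a field when $\one\neq 0$, plus the vacuous case $\one=0$) is a welcome refinement of a step the paper leaves implicit here, addressing it only in the categorical version (lemma~\ref{thm:encode_cat}) and there under a two-sided unit hypothesis.
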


\begin{proof}
Graphically, YBE for $\sigma_\mu$ means 

\begin{center}
\begin{tikzpicture}[scale=0.8]
\draw [rounded corners](0,0)--(0,0.25)--(1,0.75)--(1,1.25)--(2,1.75)--(2,3);
\draw [rounded corners](1,0)--(1,0.25)--(0.5,0.5);
\draw [rounded corners](0.3,0.65)--(0,0.75)--(0,2.25)--(1,2.75)--(1,3);
\draw [rounded corners](2,0)--(2,1.25)--(1.5,1.5);
\draw [rounded corners](1.3,1.65)--(1,1.75)--(1,2.25)--(0.5,2.5);
\draw [rounded corners](0.3,2.65)--(0,2.75)--(0,3);
\node [below] at (0,0) {$v$};
\node [below] at (1,0) {$w$};
\node [below] at (2,0) {$u$};
\node [above] at (0,3) {$\one$};
\node [above] at (1,3) {$\one$};
\node [above right,blue] at (1.5,3) {\underline{$\mu(\mu(v\otimes w)\otimes u)$}};
\node [left] at (0,1) {$\one$};
\node at (1,1) {$\mu(v\otimes w)$};
\node [right] at (1,2) {$\one$};
\fill[orange] (0.3,0.65) circle (0.1);
\fill[orange] (0.3,2.65) circle (0.1);
\fill[orange] (1.3,1.65) circle (0.1);
\fill[teal] (0.5,0.5) circle (0.1);
\fill[teal] (0.5,2.5) circle (0.1);
\fill[teal] (1.5,1.5) circle (0.1);
\node  at (4,1.5){$=$};
\end{tikzpicture}
\begin{tikzpicture}[scale=0.8]
\node  at (-2,1.5){};
\draw [rounded corners](1,1)--(1,1.25)--(2,1.75)--(2,3.25)--(1.5,3.5);
\draw [rounded corners](1.3,3.65)--(1,3.75)--(1,4);
\draw [rounded corners](0,1)--(0,2.25)--(1,2.75)--(1,3.25)--(2,3.75)--(2,4);
\draw [rounded corners](2,1)--(2,1.25)--(1.5,1.5);
\draw [rounded corners](1.3,1.65)--(1,1.75)--(1,2.25)--(0.5,2.5);
\draw [rounded corners](0.3,2.65)--(0,2.75)--(0,4);
\node [below] at (0,1) {$v$};
\node [below] at (1,1) {$w$};
\node [below] at (2,1) {$u$};
\node at (4,1) {.};
\node [above] at (0,4) {$\one$};
\node [above] at (1,4) {$\one$};
\node [above right,blue] at (1.5,4) {\underline{$\mu(v\otimes \mu(w\otimes u))$}};
\node [left] at (0,3) {$\one$};
\node [left] at (1,3) {$v$};
\node [right] at (1,2) {$\one$};
\node [right] at (2,2) {$\mu(w\otimes u)$};
\fill[orange] (1.3,1.65) circle (0.1);
\fill[orange] (0.3,2.65) circle (0.1);
\fill[orange] (1.3,3.65) circle (0.1);
\fill[teal] (1.5,3.5) circle (0.1);
\fill[teal] (0.5,2.5) circle (0.1);
\fill[teal] (1.5,1.5) circle (0.1);
\end{tikzpicture}
   \captionof{figure}{Pictorial proof of lemma \ref{thm:Bar} }\label{pic:ProofBar}
\end{center}
This is equivalent to the associativity condition \eqref{eqn:ass} for $\mu.$
\end{proof}

One thus gets, like in the case of shelves, a pre-braiding subtly encoding the algebraic structure ``associative algebra''.

\begin{remark}
The braiding $\sigma_\mu$ is \textbf{highly non-invertible}. More precisely, $\sigma_\mu^2 = \sigma_\mu$ if $\one$ is moreover a left unit.
\end{remark}

\medskip

Fix an \underline{associative $\k$-algebra} $(V, \mu)$ with a \underline{right unit} $\one$ until the end of this section. Such algebras are called \textbf{\textit{right-unital}} here.

\begin{definition}
An \emph{algebra character} is a unital algebra morphism $\epsilon: V \rightarrow \k$ to the trivial algebra $\k,$ i.e.
\begin{align}\label{eqn:char}
\epsilon(\mu(v \otimes w))&=\epsilon(v)\epsilon(w) \qquad \forall v,w \in V, \\
\epsilon (\one) &=1, \notag
\end{align}
\begin{center}
\begin{tikzpicture}[scale=0.5]
 \draw (0,0) -- (0.5,0.5); 
 \draw  (0.5,0.5) -- (0.5,1);
 \draw (1,0) -- (0.5,0.5);
 \node at (0.5,0.5) [right]{$\mu$};
 \node at (1,0) [below] {$w$};
 \node at (0,0) [below] {$v$};
 \fill [teal] (0.5,0.5) circle (0.1);
 \fill [violet] (0.5,1) circle (0.1);
 \node at (0.5,1) [above,violet] {$\epsilon$};
 \node at (2,0.5) {$=$};
 \draw  (3,0) -- (3,1);
 \draw  (4,0) -- (4,1);
 \fill [violet] (3,1) circle (0.1);
 \fill [violet] (4,1) circle (0.1);
 \node at (3,1) [above,violet] {$\epsilon$};
 \node at (4,1) [above,violet] {$\epsilon$};
 \node at (4,0) [below] {$w$};
 \node at (3,0) [below] {$v$};
 \node at (5,0) {,};
 \node at (7,0) {};
\end{tikzpicture}
\begin{tikzpicture}[scale=0.5]
 \draw (0,0) -- (0,1); 
 \fill [orange] (0,0) circle (0.1);
 \fill [violet] (0,1) circle (0.1);
 \node at (0,0) [below] {$\nu$};
 \node at (0,1) [above,violet] {$\epsilon$};
 \node at (1,0.5) {$= 1.$};
\end{tikzpicture}
   \captionof{figure}{Algebra character }\label{pic:CharAss}
\end{center}
A \emph{non-unital algebra character} satisfies the first condition only.
\end{definition}

\begin{lemma}
Braided characters are precisely maps $\epsilon \in V^*$ satisfying
\begin{equation}\label{eqn:BrCharUAA}
\epsilon (\one) \epsilon(\mu(v \otimes w))=\epsilon(v)\epsilon(w) \quad \forall v,w \in V.
\end{equation}
In particular, every algebra character is a braided character. On the other hand, any non-zero solution of \eqref{eqn:BrCharUAA} is a scalar multiple of an algebra character.
\end{lemma}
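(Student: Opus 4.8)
The plan is to unwind the definition of a braided character and then treat the two implications separately.

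First I would identify \eqref{eqn:BrCharUAA} with the defining condition \eqref{eqn:Cut}. Since $\sigma_\mu(v\otimes w)=\one\otimes\mu(v\otimes w)$ by \eqref{eqn:AssBraid}, applying $\epsilon\otimes\epsilon$ gives $(\epsilon\otimes\epsilon)\circ\sigma_\mu(v\otimes w)=\epsilon(\one)\,\epsilon(\mu(v\otimes w))$, whereas $(\epsilon\otimes\epsilon)(v\otimes w)=\epsilon(v)\epsilon(w)$. Thus \eqref{eqn:Cut} holds on $V\otimes V$ if and only if $\epsilon(\one)\epsilon(\mu(v\otimes w))=\epsilon(v)\epsilon(w)$ for all $v,w\in V$, which is precisely \eqref{eqn:BrCharUAA}. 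That every algebra character satisfies \eqref{eqn:BrCharUAA} is then immediate: substituting $\epsilon(\one)=1$ and $\epsilon(\mu(v\otimes w))=\epsilon(v)\epsilon(w)$ from \eqref{eqn:char} turns the left-hand side into $\epsilon(v)\epsilon(w)$.

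For the converse, the key observation I would establish is that a nonzero solution $\epsilon$ of \eqref{eqn:BrCharUAA} must have $\epsilon(\one)\neq 0$. Indeed, if $\epsilon(\one)=0$, then \eqref{eqn:BrCharUAA} reads $\epsilon(v)\epsilon(w)=0$ for all $v,w$; taking $w=v$ gives $\epsilon(v)^2=0$, and since $\k$ is a field this forces $\epsilon(v)=0$ for every $v$, contradicting $\epsilon\neq 0$. This nonvanishing step is the only place where I expect to need a genuine hypothesis (the absence of nilpotents in $\k$), and I regard it as the crux of the argument. Granting it, I would set $\lambda:=\epsilon(\one)\in\k^*$ and $\chi:=\lambda^{-1}\epsilon$, so that $\chi(\one)=1$; rewriting \eqref{eqn:BrCharUAA} as $\epsilon(\mu(v\otimes w))=\lambda^{-1}\epsilon(v)\epsilon(w)$ then yields $\chi(\mu(v\otimes w))=\lambda^{-1}\epsilon(\mu(v\otimes w))=\lambda^{-2}\epsilon(v)\epsilon(w)=\chi(v)\chi(w)$. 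Hence $\chi$ is an algebra character and $\epsilon=\lambda\chi$, as claimed.

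Overall the computation is routine; the only subtlety worth flagging is the passage $\epsilon(v)^2=0\Rightarrow\epsilon(v)=0$, which is exactly why the statement is phrased over a field and where a $\kk$-linear (ring-theoretic) version would demand extra hypotheses.
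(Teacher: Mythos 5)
Your proof is correct and follows the only natural route: identifying \eqref{eqn:BrCharUAA} with the defining condition \eqref{eqn:Cut} applied to $\sigma_\mu$, then normalizing by $\epsilon(\one)$, which matches the paper's (unwritten, deferred-to-thesis) argument; note also the paper's follow-up remark that the last statement fails over a general ring, consistent with your closing observation. The only slight imprecision is in that side remark: over a ring one needs not just the absence of nilpotents but also that $\epsilon(\one)$ be invertible (over a field, nonzero suffices for both), though this does not affect the proof as written.
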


Working over a ring $\kk$ instead of a field $\k,$ one has to drop the last statement. 

\begin{lemma}
The linear map 
\begin{align*}
\Delta_{\one}: V &\longrightarrow V \otimes V,\\
v &\longmapsto \one \otimes v
\end{align*}
endows $(V, \sigma_{\mu})$ with a pre-braided coalgebra structure, $\sigma_{\mu}$-cocommutative if and only if $\one$ is also a left unit.
\end{lemma}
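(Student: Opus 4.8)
The plan is to verify directly the three defining properties of a $\sigma_\mu$-cocommutative pre-braided coalgebra from definition \ref{def:BraidedCoalg}: coassociativity \eqref{eqn:CoAss}, the two compatibility relations \eqref{eqn:BrCoalg} and \eqref{eqn:BrCoalg'}, and the cocommutativity \eqref{eqn:sigma_cocomm}. Since both $\sigma_\mu$ and $\Delta_{\one}$ merely insert the unit and multiply, each side of every relation can be evaluated on a pure tensor $v \otimes w$ (or a single $v$) by hand; alternatively one reads the identities off the string diagrams of figures \ref{pic:BrAss} and \ref{pic:BrCoalg}. The structural facts I will lean on are that $\one$ is a right unit, $\mu(v \otimes \one) = v$, which in particular forces $\mu(\one \otimes \one) = \one$, and that $\sigma_\mu$ always produces $\one$ in its first output slot.

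First I would dispatch coassociativity: both $(\Delta_{\one} \otimes \Id_V)\circ\Delta_{\one}$ and $(\Id_V \otimes \Delta_{\one})\circ\Delta_{\one}$ send $v$ to $\one \otimes \one \otimes v$, so \eqref{eqn:CoAss} holds with no hypothesis on $\one$. Next I verify \eqref{eqn:BrCoalg}: the left-hand side is $\Delta_2 \circ \sigma_\mu (v \otimes w) = \one \otimes \one \otimes \mu(v\otimes w)$, while the right-hand side $\sigma_1 \circ \sigma_2 \circ \Delta_1 (v \otimes w)$ produces $\one \otimes \mu(\one \otimes \one) \otimes \mu(v \otimes w)$; these agree precisely because the right-unit axiom gives $\mu(\one \otimes \one) = \one$. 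Symmetrically for \eqref{eqn:BrCoalg'}, the crucial simplification is $\sigma_\mu(v \otimes \one) = \one \otimes \mu(v \otimes \one) = \one \otimes v$, again by the right unit, after which both sides collapse to $\one \otimes \one \otimes \mu(v \otimes w)$. This establishes the pre-braided coalgebra claim.

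Finally, for cocommutativity I compute $\sigma_\mu \circ \Delta_{\one} (v) = \sigma_\mu(\one \otimes v) = \one \otimes \mu(\one \otimes v)$ and compare it with $\Delta_{\one}(v) = \one \otimes v$. These coincide for all $v$ exactly when $\mu(\one \otimes v) = v$, i.e. when $\one$ is also a left unit, which gives the stated equivalence. I do not anticipate a genuine obstacle: the content is a short unwinding of the subscript conventions \eqref{eqn:phi_i}, and the only point that demands attention is bookkeeping where the right-unit hypothesis is consumed (in both compatibility relations) versus where the left-unit hypothesis is simultaneously necessary and sufficient (only in the cocommutativity step).
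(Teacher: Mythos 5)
Your proof is correct, and it is exactly the intended verification: the paper states this lemma without a written proof (its proofs in this section are either graphical or deferred to the thesis \cite{Lebed}), and the content is precisely the direct evaluation you perform, with the key bookkeeping points being that \eqref{eqn:BrCoalg} consumes the right-unit axiom only through $\mu(\one\otimes\one)=\one$, that \eqref{eqn:BrCoalg'} consumes it through $\sigma_\mu(v\otimes\one)=\one\otimes v$, and that \eqref{eqn:sigma_cocomm} reduces to $\mu(\one\otimes v)=v$. The only step left implicit is that in the ``cocommutative $\Rightarrow$ left unit'' direction you cancel $\one\otimes(-)$, which requires $\one\neq 0$; this is harmless, since $\one=0$ together with the right-unit axiom forces $V=0$, where the equivalence holds trivially.
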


The last remarks concern arrow operations and the special role of the right unit $\one$ in our braided story. Recall definition \ref{def:NormCompat} and corollary \ref{crl:hom_operations}.
 
\begin{lemma}\label{thm:OneAss}
\begin{enumerate}
\item The arrow operations give \textit{{peripheral}} actions:
$${^\epsilon}\!\pi_w(v_1 \ldots v_{n-1} v_n) =\epsilon(\one) v_1 \ldots v_{n-1} \mu(v_n \otimes w) \qquad \forall v_i, w \in V.$$
\item In particular, the right unit $\one$ acts by identity if $\varepsilon$ is an algebra character: 
$${^\epsilon}\!\pi_\one = \Id_{T(V)}.$$
\item The pre-braiding $\sigma_\mu$ is demi-natural with respect to $\one.$ Consequently, $\one$ is right $\sigma_\mu$-compatible with any $f \in V^*.$ 
\end{enumerate}
\end{lemma}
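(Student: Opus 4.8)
The three parts share a single computational engine — the explicit evaluation of the braid word $T_{p_{n+1,n+1}}^\sigma$ on a tensor ending in $w$ — so the plan is to organise everything around that computation and to deduce the assertions in the order (3), (1), (2). Proving the naturality first removes essentially all of the bookkeeping friction from the rest.

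First I would dispatch part (3). The right-unit hypothesis gives $\sigma_\mu(v\otimes\one)=\one\otimes\mu(v\otimes\one)=\one\otimes v$ for every $v\in V$, which is exactly condition \eqref{eqn:NatWRTSigma'} with $w=\one$; hence $\sigma_\mu$ is demi-natural with respect to $\one$. (The other naturality \eqref{eqn:NatWRTSigma} fails in general, since $\sigma_\mu(\one\otimes v)=\one\otimes\mu(\one\otimes v)$ need not equal $v\otimes\one$ unless $\one$ is also a left unit — consistent with the $\sigma_\mu$-cocommutativity criterion.) The right $\sigma_\mu$-compatibility of $\one$ with an arbitrary $f\in V^*$ then follows from the implication \eqref{eqn:NatWRTSigma'}$\Rightarrow$\eqref{eqn:RightSigmaCompat} recorded after Definition \ref{def:NormCompat}; if one prefers, it is also immediate by hand, $(\Id_V\otimes f)\circ\sigma_\mu\circ(v\otimes\one)=(\Id_V\otimes f)(\one\otimes v)=f(v)\one$.

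For part (1) I would unfold $T_{p_{n+1,n+1}}^\sigma$, the word moving the last tensor factor to the front, as the composite $\sigma_1\circ\sigma_2\circ\cdots\circ\sigma_n$ acting on $\overline{v}w=v_1\cdots v_n w$ with $\sigma_n$ applied first. The bottom crossing $\sigma_n$ sends $v_n\otimes w$ to $\one\otimes\mu(v_n\otimes w)$, inserting a unit in slot $n$ and replacing $v_n$ by $\mu(v_n\otimes w)$ in slot $n+1$. Every higher crossing then meets this freshly created $\one$, and by the identity $\sigma_\mu(v_j\otimes\one)=\one\otimes v_j$ from part (3) the unit passes leftward through each crossing while $v_j$ drops back into its original slot. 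The cascade therefore telescopes to $T_{p_{n+1,n+1}}^\sigma(\overline{v}w)=\one\otimes v_1\otimes\cdots\otimes v_{n-1}\otimes\mu(v_n\otimes w)$, and applying $\epsilon_1$ extracts the scalar $\epsilon(\one)$, giving the asserted peripheral formula. Graphically this is precisely Figure \ref{pic:NWArrow}: the strand $w$ collides with $v_n$, the unit born at that crossing slides freely to the top-left past the remaining strands (the element $\one$ ``passes through a crossing''), and $\epsilon$ evaluates it.

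Part (2) is then the specialisation $w=\one$: an algebra character has $\epsilon(\one)=1$, and the right-unit law gives $\mu(v_n\otimes\one)=v_n$, whence ${^\epsilon}\!\pi_\one(\overline{v})=\overline{v}$ for $n\ge 1$; the degenerate case $n=0$ is the trivial check $1\mapsto\epsilon(\one)=1$ on $V^{\otimes 0}=\k$. I do not expect a genuine obstacle anywhere: the only delicate point is getting the order of crossings in $T_{p_{n+1,n+1}}^\sigma$ right and confirming that the unit manufactured at the first crossing is eligible to glide through all subsequent ones. That eligibility is guaranteed exactly by the special form $\sigma_\mu(-\otimes\one)=\one\otimes(-)$ established in part (3), which is why I would prove (3) before (1).
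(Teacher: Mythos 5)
Your proof is correct and is exactly the argument the paper intends: the paper states this lemma without an explicit proof (deferring to the thesis), and your telescoping computation of $T_{p_{n+1,n+1}}^\sigma$ — the unit created by the bottom crossing sliding leftward through every subsequent crossing via $\sigma_\mu(v_j\otimes\one)=\one\otimes v_j$ — is precisely what the paper's graphical calculus (the arrow-operation picture and the ``$\one$ passes through a crossing'' interpretation) encodes, with part (3) correctly reduced to the right-unit axiom and the remark following Definition \ref{def:NormCompat}. One parenthetical side remark is off: a left unit would give $\sigma_\mu(\one\otimes v)=\one\otimes v$, which is still not $v\otimes\one$, so \eqref{eqn:NatWRTSigma} fails even for two-sided units — but that remark is never used, so it does not affect the proof.
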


\medskip
We now turn to concrete computations: 

\begin{proposition}\label{thm:ComputAss}
Take a right-unital associative algebra $(V,\mu,\one)$ and two algebra characters $\epsilon$ and $\zeta.$ 
\begin{enumerate}
\item 
A bicomplex structure can be defined on $T(V)$ by
\begin{align*}
{^{\epsilon}}\! d(v_1\ldots v_n) &=\epsilon (v_1) v_2\ldots v_n +\sum_{i=1}^{n-1} (-1)^{i}v_1\ldots v_{i-1}\mu(v_i\otimes v_{i+1})v_{i+2}\ldots v_n,\\
d^{\zeta}(v_1\ldots v_n) &=(-1)^{n-1}\zeta (v_n) v_1\ldots v_{n-1} \\
&+\sum_{i=0}^{n-2} (-1)^{i} \zeta(v_{i+1})\cdots \zeta(v_n)v_1\ldots v_i\one \one\ldots \one.
\end{align*}
\item\label{it:AssPreSimpl} This bidifferential comes from the pre-bisimplicial structure
\begin{align*}
d_{n;1}(v_1\ldots v_n) &= \epsilon (v_1) v_2\ldots v_n,\\
d_{n;i+1}(v_1\ldots v_n) &= v_1\ldots v_{i-1}\mu(v_i\otimes v_{i+1})v_{i+2}\ldots v_n, \qquad 1 \le i \le n-1,\\
d'_{n;i}(v_1\ldots v_n) &= \zeta(v_{i})\cdots \zeta(v_n)v_1\ldots v_{i-1}\one \one\ldots \one, \qquad 1 \le i \le n-1,\\
d'_{n;n}(v_1\ldots v_n) &= \zeta (v_n) v_1\ldots v_{n-1}.
\end{align*}

\item\label{it:BarAcyclic}
The complex $(T(V),{^{\epsilon}}\! d)$ is acyclic.

\item\label{it:AssWSimpl}
If $\one$ is a two-sided unit, then the above structure becomes weakly bisimplicial, with
$$s_{n;i}(v_1\ldots v_n) = v_1\ldots v_{i-1}\one v_i\ldots v_n, \qquad 1 \le i \le n.$$

\item\label{it:AssSimpl}
In this case the structure $(T(V),d_{n;i},s_{n;i})$ is even simplicial.

\item
In the normalized bicomplex, $d'_{n;i}=0$ for $i < n-1.$

\item\label{it:GroupHom}
Still supposing the unit $\one$ two-sided, the differential ${^{\epsilon}}\! d - d^{\zeta}$ descends to $T(V'),$ where $V':=V/\k \one,$ giving the differential
\begin{align*}
{^{\epsilon}}\! d^{\zeta}(v_1\ldots v_n) &:=\epsilon (v_1) v_2\ldots v_n \\
&+\sum_{i=1}^{n-1} (-1)^{i}v_1\ldots v_{i-1}\mu(v_i\otimes v_{i+1})v_{i+2}\ldots v_n,\\
&+(-1)^{n}\zeta (v_n) v_1\ldots v_{n-1}.
\end{align*}
\end{enumerate}
\end{proposition}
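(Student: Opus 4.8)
The backbone of the argument is to read off the two face families of Theorem~\ref{thm:BraidedSimplHom} for the structural pre-braiding $\sigma_\mu$, since everything else (the total differentials of points~1--2, the acyclicity, the simplicial refinements, and the final descent) follows from them. First I would compute $d_{n;i}=\epsilon_1\circ T_{p_{i,n}}^{\sigma_\mu}$ by tracing the braid that pulls strand $i$ to the far left: each crossing $\sigma_\mu(a\otimes b)=\one\otimes\mu(a\otimes b)$ deposits a $\one$ on its left output, so the first crossing creates $\mu(v_{i-1}\otimes v_i)$ while the remaining crossings merely slide a $\one$ leftwards, using the \emph{right unit} $\mu(v_k\otimes\one)=v_k$; applying $\epsilon_1$ to the surviving $\one$ in the leftmost slot contributes $\epsilon(\one)=1$. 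This gives $d_{n;i+1}=v_1\ldots\mu(v_i\otimes v_{i+1})\ldots v_n$ and $d_{n;1}=\epsilon(v_1)v_2\ldots v_n$. Dually, $d'_{n;i}=\zeta_n\circ T_{p'_{i,n}}^{\sigma_\mu}$ pushes strand $i$ to the right, and now the product accumulates in the rightmost slot by \emph{associativity} while the vacated slots fill with $\one$; the final $\zeta_n$ then splits $\zeta(v_i\cdots v_n)=\zeta(v_i)\cdots\zeta(v_n)$ because $\zeta$ is an algebra character. Summing with the signs $(-1)^{i-1}$ yields points~1 and 2 simultaneously, via Theorem~\ref{thm:BraidedSimplHom}(2).

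For points~4--5 I would invoke the earlier lemma that $\Delta_\one$ makes $(V,\sigma_\mu)$ a pre-braided coalgebra, $\sigma_\mu$-cocommutative exactly when $\one$ is two-sided; Theorem~\ref{thm:BraidedSimplHom}(6) then promotes the structure to weakly bisimplicial, with $s_{n;i}=(\Delta_\one)_i$ inserting $\one$ in position $i$. To upgrade $(T(V),d_{n;i},s_{n;i})$ to a genuine simplicial object I would check the one missing identity $d_i s_i=\Id=d_{i+1}s_i$ by hand: inserting $\one$ and multiplying on its right uses the right unit, multiplying on its left uses the left unit, and the boundary case $i=1$ uses $\epsilon(\one)=1$. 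Point~3 (acyclicity of $(T(V),{^\epsilon}\!d)$) is then immediate from Corollary~\ref{crl:hom_operations}(2) with Lemma~\ref{thm:OneAss}(3): $\sigma_\mu$ is demi-natural with respect to $\one$ and $(\one,\epsilon)$ is a normalized pair. For point~6 I would describe the degenerate subcomplex as the span of tensors carrying a $\one$ in a non-terminal slot, and observe that $d'_{n;i}$ with $i<n-1$ outputs a trailing block $\one\ldots\one$ of length $\ge 2$, hence lies in that span and dies in the normalized complex.

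The final statement (point~7) is the one genuinely new assertion, and I would prove it as a \emph{descent}. Identify $T(V')$ with $T(V)/J$, where $J=\bigoplus_n J_n$ and $J_n\subseteq V^{\otimes n}$ is spanned by pure tensors having at least one factor equal to $\one$; this is exactly the kernel of $V^{\otimes n}\twoheadrightarrow (V/\k\one)^{\otimes n}$. The crux is that ${^\epsilon}\!d-d^\zeta$ preserves $J$ \emph{although neither summand does}. On a tensor with $v_j=\one$, the only terms of ${^\epsilon}\!d$ and of $d^\zeta$ that can escape $J$ are those in which the $\one$ is absorbed by a unit or by $\epsilon(\one)=\zeta(\one)=1$, and these occur in cancelling pairs: for an interior $\one$ the two faces $\mu(v_{j-1}\otimes\one)$ and $\mu(\one\otimes v_{j+1})$ appear with opposite signs, while for $v_n=\one$ the escaping term of ${^\epsilon}\!d$ cancels the escaping term of $d^\zeta$ (these are precisely the weak-simplicial and unit relations already verified). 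Hence ${^\epsilon}\!d-d^\zeta$ descends; evaluating it on a representative $v_1\ldots v_n$ and reducing mod $J$, every term of $d^\zeta$ but its last contains a $\one$ and vanishes, leaving $(-1)^{n-1}\zeta(v_n)v_1\ldots v_{n-1}$, while ${^\epsilon}\!d$ is unchanged, so the difference reproduces exactly ${^\epsilon}\!d^\zeta$.

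I expect the main obstacle to be this last descent: one must argue the cancellation for the \emph{difference} rather than for each differential separately, and track the three boundary cases ($v_1=\one$, $v_n=\one$, and an interior $v_j=\one$). The braid-tracing of the first paragraph, though routine once the crossing rule $\sigma_\mu(a\otimes b)=\one\otimes\mu(a\otimes b)$ is in hand, also demands care about which strand carries the accumulating product — and it is exactly there that associativity (pushing right) and the right unit (sweeping the leftover $\one$ left) enter.
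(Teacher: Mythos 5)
Your proposal is correct, and on points 1--6 it follows the same route as the paper: the faces and degeneracies are read off theorems \ref{thm:cuts} and \ref{thm:BraidedSimplHom} applied to $\sigma_\mu$ and the algebra (hence braided) characters, acyclicity is corollary \ref{crl:hom_operations} combined with lemma \ref{thm:OneAss}, and the simplicial upgrades come from the unit properties; your braid-tracing just makes explicit the computations the paper leaves to the reader.

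Where you genuinely diverge is point \ref{it:GroupHom}, the descent to $T(V')$, which is also the only point the paper flags as needing ``more work''. The paper never checks a cancellation by hand: it writes the kernel of $T(V)\twoheadrightarrow T(V')$ as the sum of two subcomplexes of $(T(V),{^{\epsilon}}\!d-d^{\zeta})$ --- the degenerate sub-bicomplex spanned by tensors with $\one$ in a non-terminal slot, stable under ${^{\epsilon}}\!d$ and $d^{\zeta}$ \emph{separately} by point \ref{it:AssWSimpl} and proposition \ref{thm:SimplBasics}, and the image $T(V)\otimes\one$ of the concatenation map $\ov\mapsto\ov\one,$ which is a subcomplex because proposition \ref{thm:hom_operations}, together with ${^\epsilon}\!\pi_\one=\Id_{T(V)}$ and $\zeta(\one)=1,$ shows that concatenation by $\one$ commutes with the \emph{difference} ${^{\epsilon}}\!d-d^{\zeta}$ (and only with the difference). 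Your term-by-term analysis proves the same stability directly, and in fact exhibits exactly what those two structural facts encode: your interior and initial cancellations (adjacent faces absorbing $\one$ with opposite signs) are the content of the degenerate-subcomplex statement, while your terminal cancellation (the escaping term of ${^{\epsilon}}\!d$ against that of $d^{\zeta}$ when $v_n=\one$) is the content of the concatenation statement. The paper's route buys brevity and reuse of already-proven machinery, with no case analysis; yours buys a self-contained argument that displays the crucial phenomenon --- neither ${^{\epsilon}}\!d$ nor $d^{\zeta}$ alone preserves the kernel --- which the paper leaves implicit. One small point to tighten in your write-up: the pair-cancellation is only needed for pure tensors with \emph{exactly one} factor equal to $\one$ and the remaining factors taken in a fixed complement of $\k\one$ (such tensors, together with those containing at least two factors $\one,$ span your $J$); when two or more factors equal $\one,$ every face term stays in $J$ automatically, and saying so makes the spanning argument airtight.
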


\begin{proof}
Most of the assertions follow from theorems \ref{thm:cuts} and \ref{thm:BraidedSimplHom} applied to the pre-braiding $\sigma_\mu$ from \eqref{eqn:AssBraid} and the algebra, hence braided, characters $\epsilon$ and $\zeta,$ combined with preceding lemmas.

Point \ref*{it:BarAcyclic} is the corollary \ref{crl:hom_operations} applied to the element $\one,$ posseding the ``nice'' properties described in lemma \ref{thm:OneAss}. Point \ref*{it:AssSimpl} also follows from the properties of $\one.$

More work is needed for proving point \ref*{it:GroupHom}. Point \ref*{it:AssWSimpl} ensures that tensors $v_1\ldots v_{i-1}\one v_i\ldots v_n$ with $1 \le i \le n$ generate a sub-bicomplex of $T(V),$ hence a subcomplex of $(T(V),{^{\epsilon}}\! d - d^{\zeta}).$ Further, proposition \ref{thm:hom_operations} implies that the concatenation map $\ov \mapsto \ov \one$ is a differential complex endomorphism of $(T(V),{^{\epsilon}}\! d - d^{\zeta}),$ thus its image $T(V) \otimes \one$ is a subcomplex. Forming the quotient by these two subcomplexes, one gets the desired differential on $T(V').$
\end{proof}

Differential ${^{\epsilon}}\! d^{\zeta}$ defines a (generalization of a) homology sometimes called the \textbf{\textit{group homology}}, which can also be regarded as the \textit{Hochschild homology with trivial coefficients}.

We finish with a ``non-unital'' remark:
\begin{remark}\label{rmk:nonunit}
In the \textit{non-unital case}, i.e. when $V$ is endowed with a bilinear operation $\mu$ only, one enriches $V$ with a formal two-sided unit $\wV := V \oplus \k \one,$ extending $\mu$ by
$$\mu (\one \otimes v) = \mu (v \otimes \one) = v \qquad \forall v \in \wV.$$
Due to the equivalence of the associativity of $\mu$ on $V$ and on $\wV,$ lemma \ref{thm:Bar} asserts that $\sigma_\mu$ is a pre-braiding \underline{on $\wV$} if and only if $\mu$ is associative \underline{on $V$}. Take the character $\varepsilon (V)\equiv 0, \varepsilon(\one)=1$ on $\wV.$ The differential ${^{\varepsilon}}\! d^{\varepsilon}$ descends to $T((\wV)')\simeq T(V),$ as explained in the previous proposition. One recovers the well-known \textbf{\textit{bar (or standard) differential}}:
$$ d_{bar}(v_1\ldots v_n) = \sum_{i=1}^{n-1} (-1)^{i}v_1\ldots v_{i-1}\mu(v_i\otimes v_{i+1})v_{i+2}\ldots v_n.$$
Moreover, a non-unital algebra character $\epsilon \in V^*$ extends to an algebra character on $\wV$ by imposing $\epsilon(\one)=1.$ Two such non-unital algebra characters then define a differential ${^{\epsilon}}\! d^{\zeta}$ on $T((\wV)')\simeq T(V).$ 
\end{remark}

This trick of adding formal elements will often be handy in what follows.

\begin{remark}
One can also obtain the bar differential without doing this formal unit gymnastics. It suffices to replace the total differential with a ``cut version'' $\partial_{cut}:=\sum_{i=2}^n d_{n;i}$ for the pre-simplicial structure from point \ref{it:AssPreSimpl} of proposition \ref{thm:ComputAss}.
\end{remark}

\subsection{Leibniz algebras}\label{sec:Lei}

 Leibniz algebras are ``non-commutative'' versions of Lie algebras. They were discovered by A.Bloh in 1965, but it was J.-L.Loday who woke the general interest in this structure around 1989 by, firstly, lifting the classical Chevalley-Eilenberg boundary map from the exterior to the tensor algebra, which yields a new interesting chain complex, and, secondly, by observing that the antisymmetry condition could be omitted (cf. \cite{Cyclic},\cite{LoLei},\cite{LoPi},\cite{Cuvier}). Here we recover Loday's complex guided by our ``braided'' considerations. Our interpretation explains the somewhat mysterious element ordering and signs in the formula given by Loday. 
 
 We give here short statements of the main results only, since the proofs are analogous to the associative algebra case; see \cite{Lebed} for details.

\begin{lemma}\label{thm:Lei} 
Take a $\k$-vector space $V$ equipped with a bilinear operation $[,] : V\otimes V \rightarrow V$ and a \textbf{\textit{Lie unit}}, i.e. a central element, $\one$ in $V$: 
\begin{equation}\label{eqn:LieUnit}
[\one , v] = [v , \one] = 0 \qquad \forall v \in V.
\end{equation} 

Then the bilinear application 
\begin{align}\label{eqn:LeiBraid}
 \sigma = \sigma_{[,]} : V \otimes V & \longrightarrow V \otimes V, \notag \\ 
 v\otimes w & \longmapsto w \otimes v + \one \otimes [v, w]
\end{align} 

 is a pre-braiding if and only if 
\begin{equation}\label{eqn:Lei}  \tag{Lei}
  [v,[w,u]]=[[v,w],u]-[[v,u],w] \qquad \forall v,w,u \in V, 
\end{equation} 
\begin{center}
\begin{tikzpicture}[scale=0.2]
\draw (0,0)--(2,2);
\draw (4,0)--(2,2);
\draw (3,1)--(2,0);
\draw (2,3)--(2,2);
\fill[teal] (3,1) circle (0.2);
\fill[teal] (2,2) circle (0.2);
\node at (5,1.5) {$=$};
\node at (6,1.5) {};
\end{tikzpicture}
\begin{tikzpicture}[scale=0.2]
\draw (0,0)--(2,2);
\draw (4,0)--(2,2);
\draw (1,1)--(2,0);
\draw (2,3)--(2,2);
\fill[teal] (1,1) circle (0.2);
\fill[teal] (2,2) circle (0.2);
\node at (5,1.5) {$-$};
\node at (6,1.5) {};
\end{tikzpicture}
\begin{tikzpicture}[scale=0.2]
\draw (0,0)--(2,2);
\draw (4,0)--(1,1);
\draw (2,2)--(2,0);
\draw (2,3)--(2,2);
\fill[teal] (1,1) circle (0.2);
\fill[teal] (2,2) circle (0.2);
\node at (6,0.5) {.};
\end{tikzpicture}
   \captionof{figure}{Leibniz condition }\label{pic:Lei}
\end{center}
\end{lemma}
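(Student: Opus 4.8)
The plan is to verify the Yang--Baxter equation \eqref{eqn:YB} for $\sigma = \sigma_{[,]}$ by brute expansion of both composites on a generic pure tensor $v \otimes w \otimes u \in V^{\otimes 3}$, exactly as in the proof of lemma \ref{thm:Bar}. Writing $\sigma_1 = \sigma \otimes \Id_V$ and $\sigma_2 = \Id_V \otimes \sigma$, I would first compute $\sigma_1 \circ \sigma_2 \circ \sigma_1(v \otimes w \otimes u)$ step by step. Each application of $\sigma$ splits a term into a ``flip'' part and a ``bracket'' part $\one \otimes [\,\cdot\,,\,\cdot\,]$, so after three applications one obtains a sum of several tensors. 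The crucial simplification is that whenever $\sigma$ acts on a pair one of whose entries is $\one$, the centrality \eqref{eqn:LieUnit} kills the bracket part: $\sigma(\one \otimes x) = x \otimes \one$ and $\sigma(x \otimes \one) = \one \otimes x$. Using this repeatedly I expect to land on five surviving terms for the left composite.

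Next I would carry out the same computation for $\sigma_2 \circ \sigma_1 \circ \sigma_2(v \otimes w \otimes u)$, again discarding every bracket part created by a $\one$-entry. Comparing the two sums termwise, all tensors that are not of the form $\one \otimes \one \otimes (\cdots)$ should match up in pairs, so that the entire discrepancy collapses to
\[
(\sigma_1\circ\sigma_2\circ\sigma_1 - \sigma_2\circ\sigma_1\circ\sigma_2)(v \otimes w \otimes u) = \one \otimes \one \otimes \big( [[v,w],u] - [[v,u],w] - [v,[w,u]] \big).
\]
Thus YBE holds on all of $V^{\otimes 3}$ if and only if this expression vanishes for every $v,w,u \in V$, which is precisely the Leibniz identity \eqref{eqn:Lei}.

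The remaining, and slightly subtle, point is the passage from the vanishing of $\one \otimes \one \otimes x$ to the vanishing of $x$ in the ``only if'' direction: this requires the map $x \mapsto \one \otimes \one \otimes x$ to be injective, i.e. $\one \neq 0$, which I would record as the (harmless) standing assumption that the Lie unit is nonzero. The bookkeeping of the dozen-odd intermediate terms is the only real labor here; it is most transparently organized graphically, in the spirit of figure \ref{pic:ProofBar}, where the two sides of the third Reidemeister move are drawn with each crossing resolved into a flip and a bracket trivalent vertex, and where the centrality condition lets a strand carrying $\one$ slide freely through crossings, leaving exactly the Jacobi-type defect above.
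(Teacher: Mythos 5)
Your proposal is correct and follows essentially the same route as the paper: the paper proves the associative analogue (lemma \ref{thm:Bar}) by expanding the two sides of the YBE (graphically), notes that the Leibniz case is analogous, and the defect indeed collapses to $\one\otimes\one\otimes\bigl([[v,w],u]-[[v,u],w]-[v,[w,u]]\bigr)$ exactly as you predict. Your observation about the ``only if'' direction also matches the paper's own caveat (stated after the lemma and handled in lemma \ref{thm:encode_cat} by applying $\gamma\otimes\gamma\otimes\Id_V$ for a functional $\gamma$ with $\gamma(\one)=1$): over a field this is precisely your requirement $\one\neq 0$.
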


Note that for the ``only if'' part of the statement, it is essential to work over a field $\k,$ or to demand another technical condition (cf. lemma \ref{thm:encode_cat}). 

\begin{definition}
A pair $(V,[,])$ satisfying \eqref{eqn:Lei} is a \emph{Leibniz algebra}, called \emph{unital} if endowed with a Lie unit $\one.$ 
\end{definition}

One gets the notion of \emph{Lie algebra} when adding the antisymmetry condition.

\begin{lemma}
The pre-braiding $\sigma_{[,]}$ is invertible, the inverse given by
$$\sigma_{[,]}^{-1}: v\otimes w \longmapsto w \otimes v -[w,v] \otimes\one.$$
\end{lemma}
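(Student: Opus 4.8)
The plan is to exhibit the proposed formula as a genuine two-sided inverse, which reduces to two short direct computations. Write $\psi : v\otimes w \mapsto w\otimes v - [w,v]\otimes\one$ for the candidate inverse. Since both $\sigma_{[,]}$ and $\psi$ are $\k$-linear maps on $V\otimes V$ and pure tensors span $V\otimes V$, it suffices to verify $\sigma_{[,]}\circ\psi = \Id$ and $\psi\circ\sigma_{[,]} = \Id$ on a generic pure tensor $v\otimes w$, extending by linearity.

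First I would compute $\sigma_{[,]}\circ\psi$. Applying $\psi$ gives $w\otimes v - [w,v]\otimes\one$, and then applying $\sigma_{[,]}$ term by term yields
\[
\sigma_{[,]}(w\otimes v) - \sigma_{[,]}([w,v]\otimes\one) = \bigl(v\otimes w + \one\otimes[w,v]\bigr) - \bigl(\one\otimes[w,v] + \one\otimes[[w,v],\one]\bigr).
\]
The crucial point is that $[[w,v],\one] = 0$ by the Lie-unit condition \eqref{eqn:LieUnit}, so the two copies of $\one\otimes[w,v]$ cancel and one is left with $v\otimes w$, as required.

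The second composition $\psi\circ\sigma_{[,]}$ is entirely analogous: from $\sigma_{[,]}(v\otimes w) = w\otimes v + \one\otimes[v,w]$, applying $\psi$ produces $\bigl(v\otimes w - [v,w]\otimes\one\bigr) + \bigl([v,w]\otimes\one - [[v,w],\one]\otimes\one\bigr)$, where once more $[[v,w],\one] = 0$ forces the two $[v,w]\otimes\one$ terms to cancel, leaving $v\otimes w$. There is essentially no obstacle here; the only thing to watch is that both verifications invoke the centrality of $\one$ precisely to annihilate the iterated-bracket term, so the hypothesis that $\one$ is a Lie unit is exactly what is needed — note in particular that the Leibniz identity \eqref{eqn:Lei} itself plays no role in establishing invertibility. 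I therefore expect the argument to occupy only a few lines once this cancellation is spotted.
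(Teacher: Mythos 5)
Your proof is correct: both composites are computed accurately, and the cancellation indeed hinges only on the Lie-unit condition \eqref{eqn:LieUnit} killing the iterated bracket. The paper itself omits the proof of this lemma (deferring to \cite{Lebed}, with the remark that the arguments parallel the associative case), and your direct two-sided verification is precisely the intended argument; your observation that the Leibniz identity \eqref{eqn:Lei} plays no role in the invertibility --- so that any bilinear bracket admitting a central ``Lie unit'' yields an invertible $\sigma_{[,]}$ --- is a correct and worthwhile aside.
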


\begin{definition}
A \emph{Lie (or Leibniz) character} is a unital Leibniz algebra morphism $\epsilon: V \rightarrow \k$ to the trivial (with the zero bracket) unital (with $1 \in \k$ as a Lie unit) Lie algebra $\k,$ i.e.
\begin{align}
\epsilon([v,w])&=0 \qquad \forall v,w \in V,\label{eqn:LeiChar}\\
\epsilon(\one)&=1.\notag 
\end{align}
A \emph{non-unital Lie character} satisfies the first condition only.
\end{definition}

\begin{lemma}
A Lie character is automatically a braided character for the braiding $\sigma_{[,]}.$
\end{lemma}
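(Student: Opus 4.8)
The plan is to verify directly the defining identity \eqref{eqn:Cut} of a braided character, namely $(\epsilon \otimes \epsilon)\circ \sigma_{[,]} = \epsilon \otimes \epsilon$, by evaluating both sides on an arbitrary pure tensor $v \otimes w$ and invoking the two defining properties of a Lie character.

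First I would substitute the explicit formula \eqref{eqn:LeiBraid} for the pre-braiding, obtaining
$$(\epsilon \otimes \epsilon)\circ \sigma_{[,]}(v\otimes w) = (\epsilon \otimes \epsilon)\bigl(w \otimes v + \one \otimes [v,w]\bigr) = \epsilon(w)\,\epsilon(v) + \epsilon(\one)\,\epsilon([v,w]).$$
Next I would simplify this expression. The Lie character condition \eqref{eqn:LeiChar}, $\epsilon([v,w]) = 0$, annihilates the second summand, so that the normalization $\epsilon(\one)=1$ is in fact not needed for this particular claim. Since $\k$ is commutative, the remaining summand equals $\epsilon(v)\,\epsilon(w) = (\epsilon \otimes \epsilon)(v\otimes w)$. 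Thus $(\epsilon \otimes \epsilon)\circ \sigma_{[,]} = \epsilon \otimes \epsilon$ holds on pure tensors, and hence, by linearity, on all of $V\otimes V$, which is precisely \eqref{eqn:Cut}.

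There is no genuine obstacle here: the one point to notice is that the extra term $\one \otimes [v,w]$ --- exactly the feature distinguishing $\sigma_{[,]}$ from the plain flip --- is sent into the kernel of $\epsilon$ once $\epsilon$ is applied to its bracket component. Graphically (cf. fig.\ \ref{pic:BrCoChar}), capping the crossing \eqref{eqn:LeiBraid} with two copies of $\epsilon$ splits the braiding into a flip term and a bracket term; the bracket term dies under $\epsilon$, while the flip term returns $\epsilon \otimes \epsilon$ because scalars commute. Equivalently, one may check the co-shuffle form \eqref{eqn:CutSh}: here $\Csh^{1,1} = \Id - \sigma_{[,]}$, so that
$$(\epsilon\otimes\epsilon)\circ\Csh^{1,1}(v\otimes w) = \epsilon(v)\,\epsilon(w) - \epsilon(w)\,\epsilon(v) - \epsilon(\one)\,\epsilon([v,w]) = 0$$
by the same use of \eqref{eqn:LeiChar} together with the commutativity of $\k$.
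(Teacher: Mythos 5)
Your proof is correct and is exactly the direct verification the paper intends (the paper itself omits the computation, deferring to \cite{Lebed} and noting the argument is analogous to the associative case): substituting \eqref{eqn:LeiBraid} into \eqref{eqn:Cut}, the bracket term dies by $\epsilon([v,w])=0$ and the flip term gives $\epsilon(v)\epsilon(w)$ by commutativity of $\k$. Your observation that $\epsilon(\one)=1$ is not needed is also accurate, and is consistent with the paper's later use of non-unital Lie characters in remark \ref{rmk:nonunitLei}.
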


The comultiplication we choose for Leibniz algebras is what one expects: 

\begin{definition}
We call a unital Leibniz algebra $V$ \emph{split} if there is a Leibniz sub-algebra $V'$ of $V$ and a Leibniz algebra decomposition
$ V \simeq V' \oplus \k \one.$
\end{definition}

\begin{lemma}
Take  a split unital Leibniz algebra $(V, \sigma_{[,]},\one).$ The linear map 
\begin{align*}
\Delta_{pr}: V &\longrightarrow V \otimes V,\\
v &\longmapsto \one \otimes v + v \otimes \one \qquad \forall v \in V',\\
\one &\longmapsto \one \otimes \one
\end{align*}
endows it with a semi-braided $\sigma_{[,]}$-cocommutative coalgebra structure.
\end{lemma}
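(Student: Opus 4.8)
The plan is to verify the three defining properties of a semi-pre-braided $\sigma$-cocommutative coalgebra in turn: coassociativity \eqref{eqn:CoAss}, the single braided compatibility \eqref{eqn:BrCoalg} (we do \emph{not} expect \eqref{eqn:BrCoalg'} to hold), and $\sigma$-cocommutativity \eqref{eqn:sigma_cocomm}. Throughout I would exploit the splitting $V \simeq V' \oplus \k\one$ to reduce every identity, by bilinearity, to inputs lying in $V'$ or equal to $\one$, and I would lean heavily on the centrality \eqref{eqn:LieUnit} of $\one$, which annihilates the bracket term of $\sigma_{[,]}$ whenever $\one$ enters a crossing.

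Coassociativity is the easy part. Since $\Delta_{pr}$ is primitive on $V'$ (i.e.\ $v \mapsto \one \otimes v + v \otimes \one$) and group-like on $\one$, a direct computation shows that both $(\Delta_{pr} \otimes \Id_V)\circ \Delta_{pr}$ and $(\Id_V \otimes \Delta_{pr})\circ \Delta_{pr}$ send $v \in V'$ to $\one\otimes\one\otimes v + \one\otimes v\otimes\one + v\otimes\one\otimes\one$, and send $\one$ to $\one\otimes\one\otimes\one$; hence \eqref{eqn:CoAss} holds. For $\sigma$-cocommutativity, I would note that on $v \in V'$ one has $\sigma_{[,]}(\one\otimes v) = v\otimes\one$ and $\sigma_{[,]}(v\otimes\one)=\one\otimes v$ by centrality, so $\sigma_{[,]}\circ\Delta_{pr}(v) = v\otimes\one + \one\otimes v = \Delta_{pr}(v)$, while $\sigma_{[,]}(\one\otimes\one)=\one\otimes\one$ since $[\one,\one]=0$; this gives \eqref{eqn:sigma_cocomm}.

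The main work is the braided compatibility $(\Id_V\otimes\Delta_{pr})\circ\sigma_{[,]} = (\sigma_{[,]}\otimes\Id_V)\circ(\Id_V\otimes\sigma_{[,]})\circ(\Delta_{pr}\otimes\Id_V)$. I would compute both sides on a generic $v\otimes w$ with $v,w\in V'$, where $\sigma_{[,]}$ genuinely contributes its bracket correction. Expanding the left side produces the four terms $w\otimes\one\otimes v + w\otimes v\otimes\one + \one\otimes\one\otimes[v,w] + \one\otimes[v,w]\otimes\one$; expanding the right side (apply $\Delta_{pr}\otimes\Id_V$, then $\Id_V\otimes\sigma_{[,]}$, then $\sigma_{[,]}\otimes\Id_V$, discarding at each stage all bracket terms involving $\one$) yields exactly the same four terms, so the two agree. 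The inputs involving $\one$ (namely $\one\otimes w$, $v\otimes\one$, $\one\otimes\one$) are then checked separately and collapse immediately by centrality.

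The one subtlety I expect to be the real content is the appearance of $\Delta_{pr}([v,w])$ on the left-hand side: it forces the use of the hypothesis that $V'$ is a Leibniz \emph{subalgebra}, so that $[v,w]\in V'$ and $\Delta_{pr}([v,w]) = \one\otimes[v,w] + [v,w]\otimes\one$ is again primitive. Were $[v,w]$ allowed a nonzero $\one$-component, a spurious multiple of $\one\otimes\one\otimes\one$ would survive and break \eqref{eqn:BrCoalg}; this is precisely where ``split'' is used. Finally, running the analogous computation for \eqref{eqn:BrCoalg'} produces the bracket term $\one\otimes\one\otimes[v,w]$ counted \emph{twice} on one side, confirming that the structure is only semi-pre-braided and not pre-braided; I would record this to justify the word ``semi''. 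As in the associative case, all of this can equivalently be carried out in graphical calculus, the two summands of $\sigma_{[,]}$ being drawn as a crossing plus a ``bracket-into-$\one$'' trivalent vertex.
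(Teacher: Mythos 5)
Your proof is correct: the case-by-case verification of coassociativity, $\sigma_{[,]}$-cocommutativity, and the compatibility \eqref{eqn:BrCoalg} (with the four inputs $v\otimes w$, $\one\otimes w$, $v\otimes\one$, $\one\otimes\one$), your identification of where splitness enters (needing $[v,w]\in V'$ so that $\Delta_{pr}([v,w])$ is primitive), and your check that \eqref{eqn:BrCoalg'} fails because $\one\otimes\one\otimes[v,w]$ appears twice on one side all hold up under computation and correctly account for the word ``semi''. The paper itself omits the proof of this lemma, deferring to the thesis \cite{Lebed} and noting the argument is analogous to the associative-algebra case, so your direct verification is exactly the intended argument.
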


This comultiplication turns all the elements of $V'$ into primitive ones.

Like in the associative algebra case, the Lie unit $\one$ enjoys important properties with respect to arrow operations:

\begin{lemma}\label{thm:OneLei}
\begin{enumerate}
\item The arrow operations give \textit{{adjoint actions}}: 
$${^\epsilon}\!\pi_w(v_1 \ldots v_n) =\epsilon(\one)\sum_{i=1}^n v_1 \ldots [v_i,w] \ldots v_n + \epsilon(w) v_1 \ldots v_n \qquad \forall v_i,w \in V.$$
\item In particular, the Lie unit $\one$ acts by scalars: 
$${^\epsilon}\!\pi_\one = \varepsilon(\one) \Id_{T(V)},$$
which is simply $\Id_{T(V)}$ if $\varepsilon$ is a Lie character.
\item
The pre-braiding $\sigma_{[,]}$ is natural with respect to the Lie unit $\one.$ Thus $\one$ is right $\sigma_{[,]}$-compatible with any $f \in V^*.$ 
\end{enumerate}
\end{lemma}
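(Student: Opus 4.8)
The plan is to prove the three assertions in the order 3, 1, 2, since the naturality of $\one$ established in the third point is precisely the mechanism driving the computation of the arrow operation in the first.

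For point 3 I would simply substitute into the defining formula \eqref{eqn:LeiBraid}. Centrality \eqref{eqn:LieUnit} gives $[\one,v]=0$ and $[v,\one]=0$, whence $\sigma_{[,]}(\one\otimes v)=v\otimes\one+\one\otimes[\one,v]=v\otimes\one$ and $\sigma_{[,]}(v\otimes\one)=\one\otimes v+\one\otimes[v,\one]=\one\otimes v$; these are exactly the naturality conditions \eqref{eqn:NatWRTSigma} and \eqref{eqn:NatWRTSigma'}. The right $\sigma_{[,]}$-compatibility of $\one$ with any $f\in V^*$ is then immediate, since (as remarked after Definition \ref{def:NormCompat}) condition \eqref{eqn:NatWRTSigma'} implies \eqref{eqn:RightSigmaCompat}.

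For point 1, recall that ${^\epsilon}\!\pi_w(\overline{v})=\epsilon_1\circ T^\sigma_{p_{n+1,n+1}}(\overline{v}w)$, where the braiding $T^\sigma_{p_{n+1,n+1}}=\sigma_1\circ\cdots\circ\sigma_n$ (applying $\sigma_n$ first) carries the last tensor factor $w$ to the leftmost slot. I would argue by induction on $n$, equivalently by strand-chasing in Figure \ref{pic:NWArrow}. The first crossing $\sigma_n$ splits $v_n\otimes w$ via \eqref{eqn:LeiBraid} into a \emph{swap term} $w\otimes v_n$, in which the $w$-strand continues its journey to the left, and a \emph{bracket term} $\one\otimes[v_n,w]$, in which a central $\one$ is created and $[v_n,w]$ is deposited at position $n$. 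On the swap term the remaining operator $\sigma_1\circ\cdots\circ\sigma_{n-1}$ followed by $\epsilon_1$ is exactly ${^\epsilon}\!\pi_w$ applied to $v_1\cdots v_{n-1}$ with the spectator $v_n$ riding along on the right, so the induction hypothesis produces the all-swap term $\epsilon(w)v_1\ldots v_n$ together with the bracket terms for $1\le i\le n-1$; on the bracket term the newly created $\one$ glides unchanged through all remaining crossings by point 3 (equation \eqref{eqn:NatWRTSigma'}) and is read off by $\epsilon_1$ as $\epsilon(\one)$, giving the $i=n$ bracket term. Assembling the contributions yields
$${^\epsilon}\!\pi_w(v_1\ldots v_n)=\epsilon(\one)\sum_{i=1}^n v_1\ldots[v_i,w]\ldots v_n+\epsilon(w)\,v_1\ldots v_n.$$

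Point 2 is then read off by setting $w=\one$: centrality \eqref{eqn:LieUnit} kills every bracket $[v_i,\one]$, so only the last summand survives and ${^\epsilon}\!\pi_\one=\epsilon(\one)\Id_{T(V)}$, which is $\Id_{T(V)}$ when $\epsilon$ is a Lie character (so that $\epsilon(\one)=1$). The only genuine content in the whole argument is the observation—transparent graphically—that once the bracket term manufactures a central $\one$, naturality lets it slide freely to the left; the rest is bookkeeping. The one place warranting care is the structural fact that a $\one$ can be created \emph{at most once} along the $w$-strand: after the bracket term replaces $w$ by $\one$, no further brackets arise (since $[\one,\cdot]=0$) and the deposited $[v_i,w]$ is left untouched at position $i$. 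This is exactly what guarantees that precisely $n+1$ terms appear, and it is handled uniformly by the induction.
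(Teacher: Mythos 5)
Your proof is correct, and it follows the route the paper intends: the paper in fact omits the proof of lemma \ref{thm:OneLei} entirely, stating only that the arguments are analogous to the associative-algebra case and deferring details to the author's thesis, and your order of argument --- naturality of $\one$ by direct substitution into \eqref{eqn:LeiBraid}, then strand-chasing induction for the arrow operation in which the swap term carries the induction and the bracket term creates a $\one$ that glides left by \eqref{eqn:NatWRTSigma'}, then specialization $w=\one$ --- is exactly the ``braided'' computation that analogy calls for. The only remark worth adding is that point 2 does not need point 1 at all: once point 3 establishes that $\sigma_{[,]}$ is (in particular demi-)natural with respect to $\one,$ proposition \ref{thm:hom_operations} (point 3) yields ${^\epsilon}\!\pi_\one=\epsilon(\one)\Id_{T(V)}$ directly, which is the general mechanism the paper sets up precisely for statements of this kind.
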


Everything is now ready for explicit calculations of differentials. Only the left ones give something interesting:

\begin{proposition}\label{thm:ComputLei}
Take a unital Leibniz algebra $(V,[,],\one)$ over $\k.$ The braiding $\sigma_{[,]}$ from \eqref{eqn:LeiBraid} and a braided character $\epsilon$ (for instance, a Lie character) define the following differential on $T(V)$:
\begin{align*}
{^{\epsilon}}\! d(v_1\ldots v_n) &= \epsilon(\one)\sum_{1\leqslant i < j \leqslant n} (-1)^{j-1}  v_1\ldots v_{i-1} [v_i,v_j] v_{i+1}\ldots\widehat{v_j}\ldots v_n + \\
&+\sum_{1\leqslant j \leqslant n}(-1)^{j-1}\epsilon (v_j) v_1\ldots \widehat{v_j}\ldots v_n.
\end{align*}
It comes, as usual, from the obvious pre-simplicial structure, completed into a weakly simplicial one by putting
$$s_{n;i}(v_1\ldots v_n) = \begin{cases}
 v_1\ldots v_{i-1}\one v_i\ldots v_n + v_1\ldots v_i \one v_{i+1}\ldots v_n & \text{if }v_i \in V', \\
 v_1\ldots v_{i-1}\one \one v_{i+1}\ldots v_n  & \text{if }v_i=\one.
\end{cases}$$

The complex $(T(V),{^{\epsilon}}\! d)$ is acyclic if $\epsilon(\one)=1.$
\end{proposition}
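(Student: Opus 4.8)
The plan is to specialise the two general theorems of this section to $(V,\sigma_{[,]},\epsilon)$ and then carry out a single explicit braid computation, which is precisely where the ordering and signs in Loday's formula come from. First I would note that a Lie character is a braided character (preceding lemma), so Theorem~\ref{thm:cuts} already gives that ${^\epsilon}\!d$ is a differential on $T(V)$, while Theorem~\ref{thm:BraidedSimplHom} presents it as the total differential $\partial=\sum_{k=1}^n(-1)^{k-1}d_{n;k}$ of a presimplicial structure whose face maps are $d_{n;k}(\overline v)=\epsilon_1\circ T^\sigma_{p_{k,n}}(\overline v)$, where $p_{k,n}$ pulls the $k$th strand to the leftmost position. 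Everything then reduces to evaluating these face maps.

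The heart of the argument is to expand $T^\sigma_{p_{k,n}}(\overline v)$ by resolving the crossings involved in dragging $v_k$ to the front past $v_{k-1},\dots,v_1$. At each crossing the braiding contributes the two summands of $\sigma_{[,]}(v\otimes w)=w\otimes v+\one\otimes[v,w]$: a \emph{flip} branch that lets $v_k$ continue leftwards, and a \emph{bracket} branch that freezes $[v_i,v_k]$ in place and sends a fresh $\one$ to the left. The key simplification is that, by naturality of $\sigma_{[,]}$ with respect to $\one$ (Lemma~\ref{thm:OneLei}, point 3), once a $\one$ is created it slides unchanged past every remaining strand and no further bracket can form, since $[\one,\,\cdot\,]=0$. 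Hence the expansion has exactly $k$ surviving branches: the all-flip branch, which after applying $\epsilon_1$ contributes $\epsilon(v_k)\,v_1\cdots\widehat{v_k}\cdots v_n$; and, for each $i<k$, one single-bracket branch contributing $\epsilon(\one)\,v_1\cdots v_{i-1}[v_i,v_k]v_{i+1}\cdots\widehat{v_k}\cdots v_n$. Forming $\sum_k(-1)^{k-1}d_{n;k}$ and relabelling $(i,k)\rightsquigarrow(i,j)$ yields the stated formula for ${^\epsilon}\!d$.

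For the degeneracies I would use that $\Delta_{pr}$ makes $(V,\sigma_{[,]},\Delta_{pr})$ a semi-braided, $\sigma_{[,]}$-cocommutative coalgebra (preceding lemma), so the semi-braided $\sigma$-cocommutative case of Theorem~\ref{thm:BraidedSimplHom} completes the presimplicial data into a weakly simplicial one with $s_{n;i}=(\Delta_{pr})_i$; reading off $(\Delta_{pr})_i$ on a pure tensor, two terms when $v_i\in V'$ and a single term when $v_i=\one$, gives exactly the displayed $s_{n;i}$. Finally, acyclicity follows from Corollary~\ref{crl:hom_operations}, point 2, applied with $w=\one$: by Lemma~\ref{thm:OneLei}, point 3, the pre-braiding is natural, hence demi-natural, with respect to $\one$, and the pair $(\one,\epsilon)$ is normalized precisely when $\epsilon(\one)=1$, the concatenation map $\overline v\mapsto\overline v\one$ supplying the contracting homotopy. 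The one genuinely delicate point is the second step: justifying cleanly that dragging $v_k$ to the front produces a single bracket per branch rather than a cascade, which rests entirely on the naturality of $\one$ and is exactly what demystifies the range $i<j$ and the signs $(-1)^{j-1}$ in Loday's boundary map.
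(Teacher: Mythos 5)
Your proposal is correct and takes essentially the same route as the paper: the paper's own (largely delegated) proof likewise specializes Theorems \ref{thm:cuts} and \ref{thm:BraidedSimplHom} and Corollary \ref{crl:hom_operations} to $\sigma_{[,]}$ together with the lemmas of that subsection (Lie characters are braided, $\Delta_{pr}$ gives a semi-braided $\sigma$-cocommutative coalgebra, $\one$ is natural), exactly as in the associative-algebra case, with the explicit face-map expansion you carry out left to the thesis \cite{Lebed}. One cosmetic remark: when the created $\one$ slides leftwards it is the \emph{right-hand} input of the remaining crossings, so the bracket that vanishes is $[v,\one]$ rather than $[\one,v]$; both are zero by the Lie-unit axiom \eqref{eqn:LieUnit}, so your appeal to Lemma \ref{thm:OneLei} is still the right reference.
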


\begin{remark}\label{rmk:nonunitLei}
Like for associative algebras, in the \textit{non-unital case} one enriches $V$ with a formal Lie unit $\wV := V \oplus \k \one.$ The map $\sigma_{[,]}$ is a braiding \underline{on $\wV$} if and only if $[,]$ is Leibniz \underline{on $V$}. Taking the Lie character $\varepsilon (V)\equiv 0, \varepsilon(\one)=1$ on $\wV$ and restricting ${^{\varepsilon}}\! d$ to the subcomplex $T(V)\subset T(\wV),$ one recovers the familiar \textbf{\textit{Leibniz differential}}:
$${^{\varepsilon}}\! d(v_1\ldots v_n) = \sum_{1\leqslant i < j \leqslant n} (-1)^{j-1}  v_1\ldots v_{i-1} [v_i,v_j] v_{i+1}\ldots\widehat{v_j}\ldots v_n.$$
\end{remark}

\section{An upper world: categories}\label{sec:cat}

We now present a categorical version of our braided homology theory and of the pre-braidings and other ``braided'' ingredients for associative and Leibniz algebras. We work in the settings of a preadditive {strict} monoidal category, symmetric for Leibniz algebras. The world \textit{strict} is omitted but always implied here. This categorification is rather straightforward. A (more subtle and technical) categorical version of shelves and racks and of the corresponding braided differentials can be found in \cite{Lebed3}.

 Only the basic tools of category theory are used here; the famous books \cite{Cat} and \cite{Invariants} are excellent references for the general and, respectively, ``braided'' aspects of category theory. We also recommend the preprint \cite{Westrich}, where most of the categorical notions used here are nicely presented and illustrated. See also \cite{Lebed}.
 
Notations \eqref{eqn:phi_i} and \eqref{eqn:phi^i} are frequently used in this section.

\subsection{Categorifying braided differentials}\label{sec:cat_basic}

Start with a ``local'' categorical notion of pre-braiding:

\begin{definition}\label{def:WeakBr}
\begin{itemize}
 \item
An \emph{object} $V$ in a monoidal category $(\C,\otimes,\II)$ is called \emph{pre-braided} if it is endowed with a ``local'' \emph{pre-braiding}, i.e. a morphism $\sigma_{V} : V\otimes V \rightarrow V \otimes V$ satisfying (a categorical version of) the Yang-Baxter equation \eqref{eqn:YB}:
$$(\sigma_{V}\otimes \Id_V)\circ(\Id_V \otimes \sigma_{V})\circ(\sigma_{V}\otimes \Id_V) =(Id_V \otimes \sigma_{V}
)\circ(\sigma_{V}\otimes \Id_V)\circ(\Id_V \otimes \sigma_{V}).$$

\item One talks about a \emph{braided} object if $\sigma_{V}$ is an isomorphism.

\item A \emph{braided morphism} between pre-braided objects $(V, \sigma_V)$ and $(W, \sigma_W)$ in $\C$ is a morphism $f:V \rightarrow W$ respecting the pre-braidings:
\begin{equation}\label{eqn:Nat'}
(f\otimes f)\circ\sigma_V = \sigma_W\circ (f\otimes f):V\otimes V \rightarrow W\otimes W.
\end{equation}

\item A \emph{braided character} for a pre-braided object $(V,\sigma_{V})$ is a braided morphism to $(\II,\Id_\II),$ i.e.
$$(\epsilon\otimes \epsilon)\circ \sigma_{V} = \epsilon\otimes \epsilon :V\otimes V \rightarrow \II\otimes\II=\II.$$
\end{itemize}
\end{definition}

Every object in a (pre-)braided category $(\C,c)$ is (pre-)braided, with $\sigma_{V}=c_{V,V},$ since the YBE is automatic in $\C.$ However, the most interesting situation is that of a pre-braiding proper to an object. The idea of working with \textit{\textbf{``local''}} pre-braidings on $V$ instead of demanding the whole category $\C$ to be \textit{\textbf{``globally''}} braided is similar to what is done in \cite{Goyvaerts}, where self-invertible YB operators are considered in order to define YB-Lie algebras in an additive monoidal category $\C.$ Note that, contrary to their operator, our pre-braiding is \textbf{not necessarily invertible}.
 
Any pre-braided object $(V,\sigma)$ in a monoidal category comes with an action of the monoid $B_n^+$ on $V^{\otimes n}$ for each $n \ge 1,$ defined by formula \eqref{eqn:BnAction}. If the category is moreover preadditive, one can mimic the construction of the \textit{\textbf{quantum (co)shuffle (co)multiplication}} to get morphisms 
\begin{align*}\sh^{p,q}&:V^{\otimes n} = V^{\otimes p}\otimes V^{\otimes q}\rightarrow V^{\otimes n},\\
\csh^{p,q}&:V^{\otimes n} \rightarrow V^{\otimes p}\otimes V^{\otimes q}= V^{\otimes n}.
\end{align*}
Here $n=p+q.$ Still in the preadditive context, $-\sigma$ is well defined and gives a new pre-braiding for $V.$

\medskip

The definition \ref{def:BraidedCoalg} of (semi-)pre-braided coalgebra is directly transported to the setting of a monoidal category. A categorical version of definition \ref{def:NormCompat} is also straightforward:
 
\begin{definition}\label{def:NormCompatCat}
Take an object $V$ and two morphisms $\eta:\II \rightarrow V$ and $\psi:V \rightarrow \II$ in a monoidal category.
\begin{itemize}
\item
$\eta$ and $\psi$ are said to form a \emph{normalized pair} if $\psi \circ \eta = \Id_\II.$
\item $\eta$ and $\psi$ are called \emph{right $\sigma$-compatible} for a pre-braiding $\sigma$ on $V$ if 
$$(\Id_V \otimes \psi) \circ \sigma \circ (\Id_V \otimes \eta) = \eta \circ \psi : V  \rightarrow V.$$
\item A pre-braiding $\sigma$ on $V$ is called \emph{natural} with respect to $\eta$ if
\begin{align}
 \sigma \circ (\eta \otimes \Id_V) &= \Id_V \otimes \eta,\label{eqn:NatWRTSigmaC}\\
 \sigma \circ (\Id_V \otimes \eta) &= \eta \otimes \Id_V,\label{eqn:NatWRTSigma'C}
\end{align}
and \emph{semi-natural} (or \emph{demi-natural}) if only \eqref{eqn:NatWRTSigmaC} (resp. \eqref{eqn:NatWRTSigma'C}) holds.
\end{itemize}
\end{definition} 

\medskip
Further, recall the categorical notions of (bi)differentials:

\begin{definition}\label{def:DiffCat}
Take a preadditive category $\C.$
\begin{itemize}
\item  \emph{A degree $-1$ differential} for a family of objects  $\{V_n\}_{n\ge 0}$ in $\C$ is a family of morphisms $\{d_n:V_n\rightarrow V_{n-1}\}_{n>0},$ satisfying $d_{n-1}\circ d_n =0\; \forall n>1.$ 

\item \emph{A bidegree $-1$ bidifferential} is a pair of morphism families $\{d_n,d'_n:V_n \rightarrow V_{n-1}\}_{n>0}$  s.t.
\begin{equation}\label{eqn:bidiff}
d_{n-1} \circ d_n = d'_{n-1} \circ d'_n = d'_{n-1} \circ d_n +d_{n-1} \circ d'_n = 0 \qquad \forall n>1.
\end{equation}

\item Given a degree $-1$ differential $\{d_n\}_{n>0}$ for a family of objects  $\{V_n\}_{n\ge 0},$  one defines a \emph{contracting homotopy} as a family of morphisms $\{h_n:V_n\rightarrow V_{n+1}\}_{n\ge 0},$   satisfying
$$h_{n-1} \circ d_n + d_{n+1} \circ h_n = \Id_{V_n} \qquad \forall n>0.$$

\item Different types of \emph{simplicial objects} in $\C$ are defined by replacing the words ``vector space'' by ``object in $\C$'' in the definition \ref{def:Simpl}.

\item If $\C$ is moreover monoidal, \emph{a degree $-1$ tensor (bi)differential} for a $V \in \Ob(\C)$ is a degree $-1$ (bi)differential for the family of objects $\{V^{\otimes n}\}_{n\ge 0}.$ 

\end{itemize}
\end{definition}

Points \ref*{it:totaldiff} - \ref*{it:degen} of proposition \ref{thm:SimplBasics} remain valid for simplicial objects in a preadditive category $\C.$

The presence of a contracting homotopy means, in the category $\C = \kVect,$ that the complex $(V_n, d_n)$ is acyclic.

\medskip
With all the preparatory work above, theorems \ref{thm:cuts} and \ref{thm:BraidedSimplHom} and proposition \ref{thm:hom_operations} (with their proofs!) are generalized as follows (we freely use the notations from those theorems here):

\begin{theorem}\label{thm:BraidedSimplHomCat}
In a preadditive monoidal category  $(\C,\otimes,\II),$ take a pre-braided object $(V,\sigma)$ endowed with braided characters $\epsilon$ and $\zeta.$
\begin{enumerate}
\item 
 A bidegree $-1$ tensor bidifferential for $V$ can be defined by the morphism families
\begin{align*}
({^\epsilon}\! d)_n &:= \epsilon_1 \circ \Csh^{1,n-1},\\
(d^\zeta)_n &:= (-1)^{n-1}  \zeta_n \circ \Csh^{n-1,1}.
\end{align*}
\item\label{it:SimplCat}
This bidifferential can be refined into a pre-bisimplicial structure on 
 $(V^{\otimes n})_{n\ge 0}$ given by
\begin{align*}
d_{n;i}&:= \epsilon_1 \circ T_{p_{i,n}}^\sigma,\\
d'_{n;i}&:= \zeta_n \circ T_{p'_{i,n}}^\sigma.
\end{align*}

\item If a comultiplication $\Delta$ endows $(V,\sigma)$ with a pre-braided coalgebra structure, then the morphisms $s_{n;i} := \Delta_i$ complete the preceding structure into a very weakly bisimplicial one.

\item If $(V,\sigma,\Delta)$ is a semi-pre-braided coalgebra, then one obtains a very weakly simplicial object $(V^{\otimes n},d_{n;i},s_{n;i})$ only.

\item If $\Delta$ is moreover $\sigma$-cocommutative, then the above structures on $(V^{\otimes n})_{n\ge 0}$ are weakly (bi)simplicial.

\item Take a morphism $\eta: \II \rightarrow V.$ The family $h_{n}:= (-1)^n \Id_n \otimes \eta:  V^{\otimes n} \rightarrow V^{\otimes (n+1)}$
\begin{itemize}
\item is a contracting homotopy for $(V^{\otimes n},({^\epsilon}\! d)_n)$ if the pair $(\eta,\epsilon)$ is normalized, and $\sigma$ is demi-natural with respect to $\eta.$

\item is a contracting homotopy for $(V^{\otimes n},(d^\zeta)_n)$ if the pair $(\eta,\zeta)$ is normalized and right $\sigma$-compatible.
\end{itemize}
\end{enumerate}
\end{theorem}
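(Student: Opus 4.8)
The plan is to recognize that the categorical statement is established by \emph{the same arguments} as the linear Theorems \ref{thm:cuts} and \ref{thm:BraidedSimplHom} and Proposition \ref{thm:hom_operations}: every ingredient those proofs relied on is now available at the categorical level. Indeed, the quantum co-shuffle comultiplication $\Csh^{p,q}$ and its coassociativity exist in any preadditive monoidal category (as recalled just before the theorem), the defining identity $(\epsilon\otimes\epsilon)\circ\sigma=\epsilon\otimes\epsilon$ of a braided character is built into Definition \ref{def:WeakBr}, the YBE holds by hypothesis, and the coalgebra compatibilities of Definition \ref{def:BraidedCoalg} together with the normalization and naturality conditions of Definition \ref{def:NormCompatCat} are verbatim transcriptions of their linear counterparts. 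The principle legitimizing the whole transfer is that string-diagram (graphical) calculus is sound in a strict monoidal category: any equality obtained by deforming diagrams is a genuine identity of morphisms, so the pictorial proofs (Figures \ref{pic:YB}, \ref{pic:BrCoChar}, \ref{pic:GraphProofSimpl}) carry over unchanged.

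For part 1, I would re-run the computation from the proof of Theorem \ref{thm:cuts}: coassociativity of $\Csh$ rewrites $({^\epsilon}\! d)_{n-1}\circ({^\epsilon}\! d)_n$ as $\big((\epsilon\otimes\epsilon)\circ\Csh^{1,1}\big)_1\circ\Csh^{2,n-2}$, which vanishes by \eqref{eqn:CutSh}; the $d^\zeta\circ d^\zeta$ relation and the mixed relation in \eqref{eqn:bidiff} follow the same way, using $\sigma$-compatibility where two distinct characters interact. For parts 2--5 I would verify the (pre)(bi)simplicial relations \eqref{eqn:simpl1}--\eqref{eqn:simpl6} and \eqref{eqn:simpl1'} by the diagrammatic arguments behind the proof of Theorem \ref{thm:BraidedSimplHom}: relation \eqref{eqn:simpl1} is exactly Figure \ref{pic:GraphProofSimpl} (repeated YBE, then the braided-character collapse of Figure \ref{pic:BrCoChar}), the degeneracy relations \eqref{eqn:simpl2}--\eqref{eqn:simpl4} follow from coassociativity \eqref{eqn:CoAss} and from the graphical fact that a strand slides over a branching, \eqref{eqn:simpl5} uses $\sigma$-cocommutativity \eqref{eqn:sigma_cocomm}, and the $d'$ and mixed relations use the coalgebra compatibilities \eqref{eqn:BrCoalg} and \eqref{eqn:BrCoalg'}. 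That the total differentials of this pre-bisimplicial structure reproduce the bidifferential of part 1 is the combinatorial decomposition of $\Csh^{1,n-1}$ into the summands $T^\sigma_{p_{i,n}}$, i.e. the categorical form of point \ref{it:totaldiff} of proposition \ref{thm:SimplBasics}, which the excerpt already declares to remain valid.

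Part 6 is the analogue of Proposition \ref{thm:hom_operations} with Corollary \ref{crl:hom_operations}, and carries the genuine content. The plan is to first establish the categorical concatenation identity
$$({^\epsilon}\! d)_{n+1}\circ(\Id_n\otimes\eta)=(\Id_{n-1}\otimes\eta)\circ({^\epsilon}\! d)_n+(-1)^n\,{^\epsilon}\!\pi_\eta,$$
the transcription of point \ref{item:NWarrow} of Proposition \ref{thm:hom_operations}, where ${^\epsilon}\!\pi_\eta$ is the arrow operation $\epsilon_1\circ T^\sigma_{p_{n+1,n+1}}\circ(\Id_n\otimes\eta)$; as in the original proof this comes from the braided compatibility of multiplication and comultiplication in the pre-braided Hopf algebra $\ShCoalg_{-\sigma}(V)$ of Theorem \ref{thm:Sh}. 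The demi-naturality \eqref{eqn:NatWRTSigma'C} of $\sigma$ with respect to $\eta$ then lets the $\eta$-strand slide through every crossing to the left, collapsing ${^\epsilon}\!\pi_\eta$ to $(\epsilon\circ\eta)\,\Id=\Id$ (the pair $(\eta,\epsilon)$ being normalized). Substituting $h_n=(-1)^n\Id_n\otimes\eta$, the two terms carrying $(\Id_{n-1}\otimes\eta)\circ({^\epsilon}\! d)_n$ acquire opposite signs and cancel, leaving $h_{n-1}\circ({^\epsilon}\! d)_n+({^\epsilon}\! d)_{n+1}\circ h_n=\Id$. The right differential $d^\zeta$ is treated symmetrically and is in fact easier: right $\sigma$-compatibility of $(\eta,\zeta)$ directly produces the scalar term $(-1)^n(\zeta\circ\eta)\,\Id=(-1)^n\Id$, and the same sign cancellation yields the homotopy, so no naturality is needed --- exactly the milder hypothesis recorded in the second bullet.

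The main obstacle I anticipate is bookkeeping rather than conceptual: pinning down the signs and the precise interplay of $-\sigma$ versus $\sigma$ inside $\Csh$ in the concatenation identity of part 6, since this is the one place where the argument leans on the full pre-braided Hopf algebra structure instead of a bare diagram move. One must confirm that Theorem \ref{thm:Sh} and its co-version genuinely hold in a preadditive monoidal category with \emph{no} invertibility assumption --- which the surrounding discussion guarantees --- so that the compatibility equality used in the linear proof transfers intact. Every other step is a faithful replay of the earlier proofs, rendered rigorous by the soundness of graphical calculus in $\C$.
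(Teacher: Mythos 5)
Your proposal is correct and follows exactly the paper's own route: the paper gives no separate argument for theorem \ref{thm:BraidedSimplHomCat}, but simply observes that theorems \ref{thm:cuts} and \ref{thm:BraidedSimplHom} and proposition \ref{thm:hom_operations} ``with their proofs'' transfer to any preadditive monoidal category, which is precisely the transfer you carry out (co-shuffle coassociativity plus the braided-character identity for part 1, the diagrammatic simplicial checks for parts 2--5, and the concatenation identity from the $\ShCoalg_{-\sigma}$ compatibility combined with demi-naturality/right $\sigma$-compatibility for part 6). Your explicit sign bookkeeping for the contracting homotopy, collapsing ${^\epsilon}\!\pi_\eta$ to $(\epsilon\circ\eta)\,\Id$, is exactly the categorical rendering of proposition \ref{thm:hom_operations} and corollary \ref{crl:hom_operations} that the paper leaves implicit.
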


\subsection{Basic examples revisited}\label{sec:cat_ex}

Recall the categorical definitions of associative and Leibniz algebras:

\begin{definition}\label{def:CatUAA}
\begin{itemize}
\item 
A \emph{(unital) associative algebra} in a monoidal category $\C,$ abbreviated as (U)AA, is an object $V$ together with morphisms $\mu:V\otimes V\rightarrow V$ (and $\nu:\II\rightarrow V$), satisfying  the associativity (and the unit) conditions:
 \begin{align*}
  \mu\circ(\Id_V\otimes\mu) &=\mu\circ(\mu \otimes \Id_V):V^{\otimes 3}\rightarrow V,\\
  \mu\circ(\Id_V\otimes\nu) &=\mu\circ(\nu \otimes \Id_V) = \Id_V.
 \end{align*}
 For a \emph{right-unital associative algebra} we demand only the 
 $\mu\circ(\Id_V\otimes\nu) = \Id_V$
 part of the last condition.
 
\item  The subcategory of UAAs and unital algebra morphisms (i.e. morphisms respecting $\mu$ and $\nu$) in $\C$ is denoted by $\Alg(\C),$ or $\mathbf{Alg}(\C)$ in the non-unital case.

\item An \emph{algebra character} for $V \in \Alg(\C)$ is a unital algebra morphism $\epsilon \in \Hom_{\Alg(\C)}(V,\II),$ with the \emph{default UAA structure on $\II$} ($\mu=\nu=\Id_\II$). A \emph{non-unital algebra character} is a morphism in $\mathbf{Alg}(\C)$ only.
\end{itemize}
 \end{definition}

 \begin{definition}\label{def:CatULA}
 \begin{itemize}
\item 
  A \emph{(unital) Leibniz algebra} in a symmetric preadditive category $\C,$ abbreviated as (U)LA, is an object $V$ together with morphisms $[,]:V\otimes V\rightarrow V$ (and $\nu:\II\rightarrow V$) satisfying the generalized Leibniz (and the Lie unit) conditions:
$$ [,]\circ(\Id_V\otimes [,]) =[,]\circ([,] \otimes \Id_V)-[,]\circ([,] \otimes \Id_V)\circ(\Id_V\otimes c_{V,V}) :V^{\otimes 3}\rightarrow V,$$
$$ [,] \circ(\Id_V\otimes\nu) = [,] \circ(\nu \otimes \Id_V) =0 :V\rightarrow V.$$

\item The subcategory of ULAs and unital Leibniz algebra morphisms (i.e. morphisms respecting $[,]$ and $\nu$) in $\C$ is denoted by  $\Lei(\C),$ or $\mathbf{Lei}(\C)$ in the non-unital case.

\item \emph{A Lie character} for $V \in \Lei(\C)$ is an $\epsilon \in \Hom_{\Lei(\C)}(V,\II),$ with the \emph{default ULA structure on $\II$} (zero bracket and $\nu = \Id_\II$).  A \emph{non-unital Lie character} is an $\epsilon \in \Hom_{\mathbf{Lei}(\C)}(V,\II).$
\end{itemize}
\end{definition}

See for instance \cite{Baez} and \cite{MajidBraided} for the definition of algebras in a monoidal category, and \cite{Goyvaerts} for a survey on categorical Lie algebras. Note that $\Alg(\Vect)$ and $\Lei(\Vect)$ are the familiar categories of $\k$-linear unital associative and Leibniz algebras respectively.

\begin{definition}\label{def:NormalMor}
\emph{A normalized morphism} $\varphi:V \rightarrow W$ for $V,W \in \Alg(\C)$ or $\in \Lei(\C)$ is a morphism in $\C$ respecting the units, i.e. $\varphi \circ \nu_V = \nu_W.$
\end{definition}

For $W = \II$ this means that $(\nu_V,\varphi)$ is a normalized pair.

\medskip
Now, the content of subsections \ref{sec:bar} and \ref{sec:Lei} can be categorified as follows:

\begin{theorem}\label{thm:Bar&Lei}
\begin{enumerate}
\item Take a right-unital associative algebra $(V,\mu,\nu)$ in a monoidal category $(\C,\otimes,\II).$ 
\begin{enumerate}
\item $V$ can be endowed with a pre-braiding
\begin{equation}\label{eqn:sigmaUAA}
\sigma_{Ass} := \nu \otimes \mu : V\otimes V = \II \otimes V \otimes V \rightarrow V \otimes V.
\end{equation} 
\item If $\nu$ is moreover a two-sided unit, then the comultiplication 
$$\Delta_{Ass} := \nu \otimes \Id_V:V = \II \otimes V \rightarrow V \otimes V$$ 
completes $\sigma_{Ass}$ into a $\sigma_{Ass}$-cocommutative pre-braided coalgebra structure.
\item Any algebra character $\epsilon \in \Hom_{\Alg(\C)}(V,\II)$ is a braided character for $(V,\sigma_{Ass}).$
\item The pre-braiding $\sigma_{Ass}$ is demi-natural with respect to the unit $\nu.$ Moreover, for any algebra character $\epsilon,$ the pair $(\nu,\epsilon)$ is normalized. 
\end{enumerate}
\item Take a unital Leibniz algebra $(V,[,],\nu)$ in a symmetric preadditive category $(\C,\otimes,\II,c).$ 
\begin{enumerate}
\item $V$ can be endowed with an invertible braiding 
\begin{equation}\label{eqn:sigmaULA}
\sigma_{Lei}:= c_{V,V}+ \nu \otimes [,].
\end{equation}
\item If $\C$ is additive and if one has a Leibniz algebra decomposition $V\simeq V' \oplus \II,$ then 
\begin{align*}
\Delta_{Lei}|_{V'} &:= \nu \otimes \Id_{V'} + \Id_{V'} \otimes \nu :V' \rightarrow V \otimes V,\\
\Delta_{Lei}|_{\II} &:= \nu \otimes \nu : \II \rightarrow V \otimes V
\end{align*}
completes $\sigma_{Lei}$ into a $\sigma_{Lei}$-cocommutative semi-braided coalgebra structure. 
\item Any Lie character $\epsilon \in \Hom_{\Lei(\C)}(V,\II)$ is a braided character for $(V,\sigma_{Lei}).$
\item The braiding $\sigma_{Lei}$ is natural with respect to the unit $\nu.$ Moreover, for any Lie character $\epsilon,$ the pair $(\nu,\epsilon)$ is normalized. 
\end{enumerate}
\end{enumerate}
\end{theorem}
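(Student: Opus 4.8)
The plan is to categorify, point by point, the vector-space lemmas of subsections \ref{sec:bar} and \ref{sec:Lei}, observing that almost all of those verifications were already carried out \emph{graphically} (see for instance the pictorial proof of lemma \ref{thm:Bar} in figure \ref{pic:ProofBar}). Since the graphical calculus for (symmetric) monoidal categories is sound, each such diagrammatic argument transfers verbatim, the strands now denoting $\Id_V$ and the vertices denoting $\mu,\nu,[,]$ or $c_{V,V}$. The few genuinely equational statements are short computations from the categorical axioms (bifunctoriality of $\otimes$, naturality of $c$, and the strict unit identities $c_{\II,V}=c_{V,\II}=\Id$), which I would spell out directly. Note that, in contrast to the vector-space lemmas, only the \emph{forward} (``if'') direction is needed here, so the technical ``only if'' subtleties (cf.\ lemma \ref{thm:encode_cat}) never arise.

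For the right-unital associative algebra (part 1): \textbf{(a)} the YBE \eqref{eqn:YB} for $\sigma_{Ass}=\nu\otimes\mu$ is exactly the equivalence drawn in figure \ref{pic:ProofBar}; read in $\C$, its two sides reduce to $\nu\otimes\nu\otimes(\mu\circ(\mu\otimes\Id_V))$ and $\nu\otimes\nu\otimes(\mu\circ(\Id_V\otimes\mu))$, which agree by associativity (right-unitality collapsing the intermediate unit strands). \textbf{(b)} Coassociativity \eqref{eqn:CoAss} of $\Delta_{Ass}=\nu\otimes\Id_V$ is immediate, both composites equalling $\nu\otimes\nu\otimes\Id_V$; the compatibilities \eqref{eqn:BrCoalg}--\eqref{eqn:BrCoalg'} are the same diagrammatic checks as in the vector-space case, and once $\nu$ is two-sided, $\sigma_{Ass}$-cocommutativity \eqref{eqn:sigma_cocomm} follows from $(\nu\otimes\mu)\circ(\nu\otimes\Id_V)=\nu\otimes(\mu\circ(\nu\otimes\Id_V))=\nu\otimes\Id_V$. \textbf{(c)} For an algebra character $\epsilon\circ\nu=\Id_\II$ and $\epsilon\circ\mu=\epsilon\otimes\epsilon$, whence $(\epsilon\otimes\epsilon)\circ\sigma_{Ass}=(\epsilon\circ\nu)\otimes(\epsilon\circ\mu)=\epsilon\otimes\epsilon$. \textbf{(d)} Demi-naturality is condition \eqref{eqn:NatWRTSigma'C}: $\sigma_{Ass}\circ(\Id_V\otimes\nu)=\nu\otimes(\mu\circ(\Id_V\otimes\nu))=\nu\otimes\Id_V$ by right-unitality, and $(\nu,\epsilon)$ is normalized precisely because $\epsilon\circ\nu=\Id_\II$.

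For the unital Leibniz algebra (part 2) the points (b)--(d) run parallel, now using the Lie-unit relations $[,]\circ(\nu\otimes\Id_V)=[,]\circ(\Id_V\otimes\nu)=0$, the naturality of $c$ (which lets the symmetry slide past $\epsilon$ and $\nu$, collapsing via $c_{\II,V}=c_{V,\II}=\Id$), and the Lie-character relation $\epsilon\circ[,]=0$. For instance (c) follows from $(\epsilon\otimes\epsilon)\circ c_{V,V}=\epsilon\otimes\epsilon$ together with $(\epsilon\otimes\epsilon)\circ(\nu\otimes[,])=(\epsilon\circ\nu)\otimes(\epsilon\circ[,])=0$; and naturality of $c$ plus the Lie-unit relations yield both \eqref{eqn:NatWRTSigmaC} and \eqref{eqn:NatWRTSigma'C}, so $\sigma_{Lei}$ is \emph{natural} (not merely demi-natural) with respect to $\nu$. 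The genuinely new work is \textbf{(a)}, the YBE for $\sigma_{Lei}=c_{V,V}+\nu\otimes[,]$: I would expand both sides of \eqref{eqn:YB} by bilinearity into eight summands each, match the purely symmetric term by the braid relation for $c$, relate the summands carrying a single $\nu\otimes[,]$ factor by repeated naturality of $c$, and check that the higher summands either vanish by the Lie-unit relation or assemble, via the generalized Leibniz identity \eqref{eqn:Lei}, into matching double-bracket contributions. Invertibility then follows by exhibiting $\sigma_{Lei}^{-1}=\bigl(\Id_{V\otimes V}-(\Id_V\otimes\nu)\circ[,]\bigr)\circ c_{V,V}$, the transcription of the vector-space inverse $v\otimes w\mapsto w\otimes v-[w,v]\otimes\one$, and verifying $\sigma_{Lei}\circ\sigma_{Lei}^{-1}=\Id=\sigma_{Lei}^{-1}\circ\sigma_{Lei}$ by the same bookkeeping.

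The main obstacle is exactly this bookkeeping in step 2\textbf{(a)}: tracking the mixed $c$--$[,]$ summands of the expanded braid relation. In the vector-space proof this identity is compressed into a single picture (figure \ref{pic:Lei}), but categorically one must follow each naturality square for $c$ and each use of the Lie-unit relations to confirm that precisely the three terms of \eqref{eqn:Lei} survive on each side. Once this is settled, everything else is a routine transcription of the graphical vector-space arguments, and the resulting data feed directly into theorem \ref{thm:BraidedSimplHomCat}.
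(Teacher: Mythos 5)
Your proposal is correct and takes essentially the same route as the paper: the paper treats theorem \ref{thm:Bar&Lei} as a direct categorification of the graphical vector-space proofs of lemmas \ref{thm:Bar} and \ref{thm:Lei} (details deferred to the thesis), and its accompanying remark singles out exactly the ingredients you use --- naturality of $c$ with respect to $\nu,$ $[,]$ and $\epsilon,$ plus the symmetry of $c$ --- as what makes $\sigma_{Lei}$ a braiding and $\epsilon$ a braided character, with the ``only if'' subtleties isolated in lemma \ref{thm:encode_cat} just as you note. Your explicit expansion of the YBE for $\sigma_{Lei}$ (braid relation for the pure-$c$ term, naturality for the single-bracket terms, Lie-unit vanishing and the Leibniz identity for the rest) and the inverse $\sigma_{Lei}^{-1}=\bigl(\Id_{V\otimes V}-(\Id_V\otimes\nu)\circ[,]\bigr)\circ c_{V,V}$ are faithful transcriptions of the verifications the paper leaves implicit.
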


Observe that in the Leibniz algebra setting, the naturality (with respect to morphisms $\nu$ and $[,]$ in particular) and the symmetry of the braiding $c$ are essential in proving that $\sigma_{Lei}$ is indeed a  braiding, while the naturality of $c$ with respect to $\epsilon$ shows that $\epsilon$ is a braided character for $(V,c_{V,V})$ (which implies that it is a braided character for $(V,\sigma_{V})$ if it preserves the Leibniz structure). 

\begin{remark}
According to the theorem, a ULA $V$ is a \textit{{``doubly braided''}} object: $\sigma_{V}$ and $c_{V,V}$ are indeed two distinct braidings on $V.$ One can say more: they endow the tensor powers of $V$ with an action of the \textit{{virtual braid group}} (\cite{KauffmanVirtual}, \cite{Ver}). The close connections between (pre-)braided objects and virtual braid groups are studied in detail in \cite{Lebed3}.
\end{remark}

Theorems \ref{thm:BraidedSimplHomCat} and \ref{thm:Bar&Lei} put together give categorical versions of propositions \ref{thm:ComputAss} and \ref{thm:ComputLei}, as well as of the ``non-unital'' remarks \ref{rmk:nonunit} and \ref{rmk:nonunitLei}:
\begin{corollary}\label{crl:cat}
\begin{enumerate}
\item Any algebra character $\epsilon: V \rightarrow \II$ for a UAA $(V,\mu,\nu)$ in a preadditive monoidal category $\C$ produces a degree $-1$ tensor differential 
$$({^\epsilon}\! d)_n := \epsilon_1 +\sum_{i=1}^{n-1}(-1)^i \mu_i,$$
for $V,$ with a contracting homotopy $h_{n}= (-1)^n \nu_{n+1}.$ Any non-unital algebra characters $\epsilon,\zeta$ for an associative algebra $(V,\mu)$ in $\C$ produce a degree $-1$ tensor differential
$$({^\epsilon}\! d^\zeta)_n := \epsilon_1 +\sum_{i=1}^{n-1}(-1)^i \mu_i +(-1)^n \zeta_n.$$

\item Any Lie character $\epsilon: V \rightarrow \II$ for a ULA $(V,[,],\nu)$ in a symmetric preadditive category $\C$ produces a degree $-1$ tensor differential 
$$ ({^\epsilon}\! d)_n := \epsilon_1\circ \underset{-c}{\cshuffle}^{1,n-1} +\sum_{1\leqslant i < j \leqslant n} (-1)^{j-1} [,]_i\circ (\Id_i \otimes c_{V^{j-i-1},V} \otimes \Id_{n-j}),$$
for $V,$  with a contracting homotopy $h_{n}= (-1)^n \nu_{n+1}.$ Any non-unital Lie character $\epsilon$ for a Leibniz algebra $(V,[,])$ in a symmetric additive $\C$ produces a degree $-1$ tensor differential
$$({^\epsilon}\! d)_n := \sum_{1\leqslant i < j \leqslant n} (-1)^{j-1} [,]_i\circ (\Id_i \otimes c_{V^{j-i-1},V} \otimes \Id_{n-j}).$$
\end{enumerate}
\end{corollary}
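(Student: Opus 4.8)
The plan is to feed the structural (pre-)braidings of Theorem \ref{thm:Bar&Lei} into the abstract machine of Theorem \ref{thm:BraidedSimplHomCat}, and then to make the resulting quantum co-shuffle expressions explicit. For a right-unital associative algebra $(V,\mu,\nu)$ I take $\sigma_{Ass}=\nu\otimes\mu$ from \eqref{eqn:sigmaUAA}; by Theorem \ref{thm:Bar&Lei}(1c) every algebra character $\epsilon$ is a braided character for $(V,\sigma_{Ass})$, so Theorem \ref{thm:BraidedSimplHomCat}(1) at once produces the degree $-1$ tensor differential $({^\epsilon}\! d)_n=\epsilon_1\circ\Csh^{1,n-1}$ (the co-shuffle being taken for $-\sigma_{Ass}$). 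For a unital Leibniz algebra $(V,[,],\nu)$ in a symmetric preadditive category I take $\sigma_{Lei}=c_{V,V}+\nu\otimes[,]$ from \eqref{eqn:sigmaULA}, whose Lie characters are braided characters by Theorem \ref{thm:Bar&Lei}(2c), giving $({^\epsilon}\! d)_n=\epsilon_1\circ\Csh^{1,n-1}$ for this braiding. The homotopies cost nothing: $(\nu,\epsilon)$ is a normalized pair and $\sigma_{Ass}$ (resp.\ $\sigma_{Lei}$) is demi-natural (resp.\ natural) with respect to $\nu$ by Theorem \ref{thm:Bar&Lei}(1d),(2d), so the last part of Theorem \ref{thm:BraidedSimplHomCat} hands us $h_n=(-1)^n\Id_n\otimes\nu=(-1)^n\nu_{n+1}$ in both cases.

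The substance is turning $\epsilon_1\circ\Csh^{1,n-1}$ into the stated closed forms, which I carry out basis-free, mirroring the $\k$-linear Propositions \ref{thm:ComputAss} and \ref{thm:ComputLei}. By Theorem \ref{thm:BraidedSimplHomCat}(\ref{it:SimplCat}) one has $\epsilon_1\circ\Csh^{1,n-1}=\sum_{i=1}^n(-1)^{i-1}d_{n;i}$ with $d_{n;i}=\epsilon_1\circ T_{p_{i,n}}^{\sigma}$, so it suffices to evaluate each face map. For $\sigma_{Ass}$ the braid $T_{p_{i,n}}$ dragging the $i$-th strand to the front is deterministic: each crossing deposits $\nu$ in its left output and a product in its right output, and the right-unit law $\mu\circ(\Id_V\otimes\nu)=\Id_V$ collapses the intermediate products, so that $d_{n;1}=\epsilon_1$ and $d_{n;i}=\mu_{i-1}$ for $i\ge 2$, the leftmost $\nu$ being swallowed by $\epsilon\circ\nu=\Id_\II$. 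Reassembling with the signs $(-1)^{i-1}$ yields $\epsilon_1+\sum_{i=1}^{n-1}(-1)^i\mu_i$.

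For $\sigma_{Lei}$ the face map expands by distributing the two summands $c_{V,V}$ and $\nu\otimes[,]$ over each of the $i-1$ crossings of $T_{p_{i,n}}$. The all-symmetry choices assemble exactly into $\epsilon_1\circ\underset{-c}{\cshuffle}^{1,n-1}$; a single bracket crossing, with all others symmetries, contributes one term $[,]_i\circ(\Id_i\otimes c_{V^{j-i-1},V}\otimes\Id_{n-j})$, the naturality of $c$ routing $v_j$ leftward to meet $v_i$ and fixing both the index range $1\le i<j\le n$ and the sign $(-1)^{j-1}$; and every choice with two or more bracket crossings vanishes, since a $\nu$ once created crosses the remaining strands as a pure symmetry, by the Lie-unit identities $[,]\circ(\nu\otimes\Id_V)=[,]\circ(\Id_V\otimes\nu)=0$ (equivalently the naturality of $\sigma_{Lei}$ with respect to $\nu$). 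I expect this last bookkeeping to be the only real obstacle: one must confirm that among the $2^{i-1}$ a priori terms, exactly the one all-symmetry term and the $i-1$ single-bracket terms survive, with the correct symmetry insertions $c_{V^{j-i-1},V}$ and signs. This is precisely the basis-free shadow of the index juggling in Proposition \ref{thm:ComputLei}, and I would keep it under control with the graphical calculus, reading the diagram for each $d_{n;i}$ and applying the YBE together with the naturality of $c$.

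Finally, the non-unital statements follow by the categorical form of Remarks \ref{rmk:nonunit} and \ref{rmk:nonunitLei}. In the Leibniz case (where $\C$ is additive) I adjoin a formal Lie unit $\wV=V\oplus\II$, extend $[,]$ so that $\nu$ is a Lie unit, and take the character with $\epsilon|_V=0$ and $\epsilon\circ\nu=\Id_\II$; since the bracket lands in $V$ and the Koszul part vanishes on $V^{\otimes n}$, the summand $T(V)\subset T(\wV)$ is a subcomplex on which ${^\epsilon}\! d$ restricts to the stated bracket sum. In the associative case I likewise adjoin a two-sided unit and extend $\epsilon,\zeta$ by $\Id_\II$ on $\nu$, but now pass to the quotient $T((\wV)')\cong T(V)$ by the unit-generated sub-bicomplex as in Proposition \ref{thm:ComputAss}; there every internal-unit term of ${^\epsilon}\! d-d^\zeta$ dies, leaving $\epsilon_1+\sum_{i=1}^{n-1}(-1)^i\mu_i+(-1)^n\zeta_n$, with $(-1)^n\zeta_n=-(d^\zeta)_n$ surviving as the only contribution of the right differential. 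When $\C$ is merely preadditive one sidesteps the formal unit altogether by using the ``cut'' total differential, whose square-zero again reduces to associativity and the character relations via the same co-shuffle argument as in Theorem \ref{thm:cuts}.
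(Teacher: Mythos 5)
Your proposal is correct and takes essentially the same route as the paper, whose entire proof is the one-sentence observation that theorems \ref{thm:BraidedSimplHomCat} and \ref{thm:Bar&Lei} put together give categorical versions of propositions \ref{thm:ComputAss} and \ref{thm:ComputLei} and of the non-unital remarks \ref{rmk:nonunit} and \ref{rmk:nonunitLei}. The face-map evaluations, the expansion of $\sigma_{Lei}$ with the Lie-unit vanishing of multi-bracket terms, and the formal-unit adjunction in the non-unital cases are precisely the computations the paper leaves implicit, so your write-up is if anything more detailed than the original.
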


Working in the category $\Vect$ in section \ref{sec:examples}, we noticed that the pre-braidings obtained for associative and Leibniz algebras \textbf{{encode}} the underlying algebraic structures (cf. lemmas \ref{thm:Bar} and \ref{thm:Lei}). It is still true, with some additional technical assumptions, in the categorical setting:

\begin{lemma}\label{thm:encode_cat} 
\begin{enumerate}
\item Take an object $V$ in a monoidal category $(\C,\otimes,\II)$ endowed with two morphisms $\mu: V \otimes V \rightarrow V$ and $\nu:\II \rightarrow V,$ with $\nu$ being a two-sided unit for $\mu.$ The morphism $\sigma_{Ass}$ defined by \eqref{eqn:sigmaUAA} is a pre-braiding {if and only if} $\mu$ is associative.
\item Take an object $V$ in a symmetric preadditive category $(\C,\otimes,\II,c)$ endowed with two morphisms $[,]: V \otimes V \rightarrow V $ and $\nu:\II \rightarrow V,$ with $\nu$ being a Lie unit for $[,].$ Additionally suppose the existence of a {normalized} morphism $\gamma:V\rightarrow\II.$ The morphism $\sigma_{Lei}$ defined by \eqref{eqn:sigmaULA} is a braiding {if and only if} $[,]$ satisfies the Leibniz condition.
\end{enumerate}
\end{lemma}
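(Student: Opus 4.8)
The plan is to prove both parts by the same two-move pattern. First I would expand the Yang--Baxter identity \eqref{eqn:YB} for the relevant morphism and simplify it, using only the unit relations, down to an equation of the shape $(\nu\otimes\nu\otimes\Id_V)\circ A=(\nu\otimes\nu\otimes\Id_V)\circ B$ for two morphisms $A,B:V^{\otimes 3}\to V$. The point is then that $A=B$ is exactly the defining identity of the structure (associativity, resp. the generalized Leibniz identity of Definition \ref{def:CatULA}). The forward implication ``structure $\Rightarrow$ pre-braiding'' is immediate once the YBE is brought into this form, while the converse amounts to cancelling the two prepended copies of $\nu$; for that I build a left inverse (retraction) of $\nu\otimes\nu\otimes\Id_V$, and locating the minimal data needed for this retraction is precisely where the two hypotheses enter (the two-sided unit in (1), the auxiliary $\gamma$ in (2)).

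For (1), computing directly from $\sigma_{Ass}=\nu\otimes\mu$ in \eqref{eqn:sigmaUAA} and from $\mu\circ(\Id_V\otimes\nu)=\Id_V$, $\mu\circ(\nu\otimes\nu)=\nu$, each application of $\sigma_{Ass}$ inserts a fresh $\nu$ in the leftmost slot and multiplies the two active strands, so that
$$\sigma_1\circ\sigma_2\circ\sigma_1=(\nu\otimes\nu\otimes\Id_V)\circ\mu\circ(\mu\otimes\Id_V),\qquad \sigma_2\circ\sigma_1\circ\sigma_2=(\nu\otimes\nu\otimes\Id_V)\circ\mu\circ(\Id_V\otimes\mu).$$
Associativity makes the two right-hand sides coincide, which gives the YBE. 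Conversely, I set $r:=\mu\circ(\mu\otimes\Id_V):V^{\otimes 3}\to V$; the two-sided unit yields $r\circ(\nu\otimes\nu\otimes\Id_V)=\mu\circ(\nu\otimes\Id_V)=\Id_V$, so composing the simplified YBE on the left by $r$ cancels the two units and leaves $\mu\circ(\mu\otimes\Id_V)=\mu\circ(\Id_V\otimes\mu)$. (This is exactly why a two-sided unit is required here, whereas Lemma \ref{thm:Bar} got away with a right unit: over a field the map $x\mapsto\one\otimes\one\otimes x$ is injective, but in a general category one must exhibit the retraction $r$, and its construction uses the \emph{left} unit.)

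For (2), I would first note that invertibility is automatic and needs no Leibniz-type hypothesis: the candidate inverse $v\otimes w\mapsto w\otimes v-[w,v]\otimes\one$ associated to \eqref{eqn:sigmaULA} is a genuine two-sided inverse already by the Lie-unit relations $[,]\circ(\nu\otimes\Id_V)=[,]\circ(\Id_V\otimes\nu)=0$, so ``braiding'' reduces to verifying the YBE. Writing $\sigma_{Lei}=c_{V,V}+b$ with $b:=\nu\otimes[,]$, I expand $\sigma_1\sigma_2\sigma_1$ and $\sigma_2\sigma_1\sigma_2$ into eight terms each, sorted by the number of factors $b$ they contain. The zero-bracket (pure $c$) terms cancel by the YBE for the symmetry $c$; the single-bracket terms cancel pairwise across the two sides using naturality of $c$ and the Lie unit; and only the double-bracket terms survive, assembling into
$$\sigma_1\sigma_2\sigma_1-\sigma_2\sigma_1\sigma_2=(\nu\otimes\nu\otimes\Id_V)\circ L,\qquad L:=[,]\circ([,]\otimes\Id_V)-[,]\circ([,]\otimes\Id_V)\circ(\Id_V\otimes c_{V,V})-[,]\circ(\Id_V\otimes[,]).$$
By Definition \ref{def:CatULA}, $L=0$ is exactly the generalized Leibniz identity, which settles the forward direction; for the converse the normalized $\gamma:V\to\II$ (so $\gamma\circ\nu=\Id_\II$, cf. Definition \ref{def:NormalMor}) supplies the retraction $\gamma\otimes\gamma\otimes\Id_V$ of $\nu\otimes\nu\otimes\Id_V$, and composing kills the two units to leave $L=0$.

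The only genuinely laborious step, and the main obstacle, is the bookkeeping in the Leibniz expansion: tracking the mixed terms, using naturality of $c$ to move the inserted $\nu$'s into the leftmost slots, and checking that precisely three double-bracket terms persist and reassemble into $L$. This is the categorical avatar of the element-level verification behind Lemma \ref{thm:Lei}, and it is cleanest carried out in string-diagram form. I would also emphasize the role of $\gamma$: over a field any functional with $\gamma(\one)=1$ serves, which is why Lemma \ref{thm:Lei} needed no such datum; in a general category the existence of $\gamma$ is exactly the ``technical condition'' that makes the two prepended units cancellable, and it is the single place at which the reverse implication could otherwise fail.
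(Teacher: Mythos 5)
Your proposal is correct and follows essentially the same route as the paper: both reduce the YBE, via the unit relations, to an identity of the form $\nu\otimes\nu\otimes f=\nu\otimes\nu\otimes g$ and then cancel the two prepended units by a retraction --- built from $\mu$ and the left unit in the associative case (the paper uses $\mu\circ(\Id_V\otimes\mu)$ where you use $\mu\circ(\mu\otimes\Id_V)$, an immaterial difference), and given by $\gamma\otimes\gamma\otimes\Id_V$ in the Leibniz case. Your expansion bookkeeping and your observation that invertibility of $\sigma_{Lei}$ needs only the Lie unit are exactly the content the paper delegates to ``repeating the proofs of lemmas \ref{thm:Bar} and \ref{thm:Lei}''.
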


\begin{proof}
One repeats the proofs of lemmas \ref{thm:Bar} and \ref{thm:Lei}. The only non-trivial step is to show that
$$\nu\otimes\nu\otimes f = \nu\otimes\nu\otimes g:V^{\otimes 3}\rightarrow V^{\otimes 3}$$ 
implies
$$f=g:V^{\otimes 3}\rightarrow V.$$

When $\nu$ is a left unit for $\mu,$ this is done by applying $\mu\circ(\Id_V\otimes \mu)$ to both sides of the first identity. In the Leibniz case, apply $\gamma\otimes\gamma\otimes \Id_V.$
\end{proof}

Similar considerations lead to an ``if and only if'' type result for morphisms:

\begin{lemma}\label{prop:braid_nat}
\begin{itemize}
\item In the settings of theorem \ref{thm:Bar&Lei}, any morphism $f:V\rightarrow W$ in $\Alg(\C)$ (resp. $\Lei(\C)$) is braided, with $V$ and $W$ endowed with the pre-braidings $\sigma_{Ass}$ (resp. $\sigma_{Lei}$).
\item Suppose additionally, for the algebra case, that $\nu$ is a two-sided unit, and, for the Leibniz case, the existence of a normalized morphism $\gamma:V\rightarrow\II.$ Then any braided normalized (cf. definition \ref{def:NormalMor}) morphism $f:V\rightarrow W$  in $\C$ necessarily respects the multiplications, i.e. remains a morphism in $\Alg(\C)$ (resp. $\Lei(\C)$).
\end{itemize}
\end{lemma}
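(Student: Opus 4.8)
The plan is to treat the two implications separately, and within each to run the associative and the Leibniz cases in parallel, since they differ only in bookkeeping. The forward direction (every structure morphism is braided) is a direct computation; the converse is where the extra hypotheses do real work.

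For the forward direction, let $f$ respect the operation and the unit, i.e. $f\circ\nu_V=\nu_W$ together with $f\circ\mu_V=\mu_W\circ(f\otimes f)$ (resp. $f\circ[,]_V=[,]_W\circ(f\otimes f)$). In the associative case I would just expand \eqref{eqn:sigmaUAA} and use functoriality of $\otimes$: $(f\otimes f)\circ\sigma_{Ass}=(f\circ\nu_V)\otimes(f\circ\mu_V)=\nu_W\otimes(\mu_W\circ(f\otimes f))=\sigma_{Ass}\circ(f\otimes f)$, which is precisely the braiding condition \eqref{eqn:Nat'}. The Leibniz case is identical once $\sigma_{Lei}$ is split as in \eqref{eqn:sigmaULA}: the symmetry summands agree by naturality of $c$ (so $(f\otimes f)\circ c_{V,V}=c_{W,W}\circ(f\otimes f)$), and the remaining summands agree exactly as above, using $f\circ\nu_V=\nu_W$ and $f\circ[,]_V=[,]_W\circ(f\otimes f)$.

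For the converse I assume $f$ braided and normalized ($f\circ\nu_V=\nu_W$) and must recover the multiplicativity of $f$. Writing out \eqref{eqn:Nat'} and feeding $f\circ\nu_V=\nu_W$ into the $\nu$-component, the associative condition collapses to the single identity $\nu_W\otimes(f\circ\mu_V)=\nu_W\otimes(\mu_W\circ(f\otimes f))$ of morphisms $V\otimes V\to W\otimes W$; in the Leibniz case, naturality of $c$ cancels the two symmetry terms first, leaving $\nu_W\otimes(f\circ[,]_V)=\nu_W\otimes([,]_W\circ(f\otimes f))$. In either case the sought identity is obtained by stripping off the left tensor factor $\nu_W$.

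This cancellation of $\nu_W$ is the main (if mild) obstacle and the only place the extra hypotheses are used: being braided only controls the composite that already begins with the unit of the \emph{target}, so the information about $\mu$ (resp. $[,]$) is trapped behind $\nu_W$ and one needs that unit to be left-invertible in a suitable sense. In the associative case I would post-compose with $\mu_W$ and use that $\nu_W$ is a two-sided (in particular left) unit, whence $\mu_W\circ(\nu_W\otimes\Id_W)=\Id_W$ removes $\nu_W$ and produces $f\circ\mu_V=\mu_W\circ(f\otimes f)$. In the Leibniz case there is no multiplication available for this, which is exactly why one posits a normalized morphism to $\II$: using a retraction $\gamma$ of the target unit (so $\gamma\circ\nu_W=\Id_\II$) and post-composing with $\gamma\otimes\Id_W$ strips off $\nu_W$ and yields $f\circ[,]_V=[,]_W\circ(f\otimes f)$. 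Together with $f\circ\nu_V=\nu_W$ this places $f$ in $\Alg(\C)$ (resp. $\Lei(\C)$). The step mirrors the cancellation in the proof of lemma \ref{thm:encode_cat}, where the analogous device ($\mu\circ(\Id\otimes\mu)$, resp. an application of $\gamma$'s) deletes spurious unit factors.
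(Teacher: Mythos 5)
Your proof is correct and follows essentially the route the paper intends: the forward direction is the functoriality/naturality computation, and the converse is the unit-stripping device of lemma \ref{thm:encode_cat} adapted to the morphism setting (post-compose with $\mu_W$ using left-unitality in the associative case, with $\gamma\otimes\Id_W$ in the Leibniz case). One remark: your Leibniz argument needs $\gamma$ to be a retraction of the \emph{target} unit, $\gamma\circ\nu_W=\Id_\II$, whereas the lemma literally posits a normalized $\gamma:V\rightarrow\II$ (i.e.\ $\gamma\circ\nu_V=\Id_\II$), which would not suffice -- braidedness of $f$ only yields $\nu_W\otimes(f\circ[,]_V)=\nu_W\otimes([,]_W\circ(f\otimes f))$, and one can build examples (e.g.\ in $\mathbf{Mod}_\ZZ$ with $W=\ZZ[1/2]\oplus\ZZ/2$, zero bracket, $\nu_W=(1,0)$) where tensoring with $\nu_W$ kills the discrepancy; so your reading is the mathematically correct one, and it is also the one consistent with the paper's subsequent proposition on $\Lei^*(\C)$, where every ULA in sight carries its own normalized morphism to $\II$.
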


Thus an associative/Leibniz algebra morphism is the same thing (modulo some technical conditions) as a  normalized braided morphism for the corresponding ``structural'' pre-braiding, illustrating the precision with which our pre-braidings capture the algebraic properties of the corresponding structure. 

This result can be compactly restated as follows. Denote by $\Br^\nu(\C)$ the subcategory of a monoidal category $\C$ with
\begin{itemize}
\item as objects, pre-braided objects $V$ endowed with a ``unit'' $\nu:\II \rightarrow V$;
\item as morphisms, braided normalized morphisms in $\C.$ 
\end{itemize}

\begin{proposition}
\begin{enumerate}
\item Given a monoidal category $\C,$ one can see $\Alg(\C)$ as a full subcategory of $\Br^\nu(\C)$ via  the functor
\begin{align*}
\Alg(\C) &\hookrightarrow \Br^\nu(\C),\\
(V,\mu,\nu) &\mapsto (V,\sigma_{Ass},\nu),\\
(f:V\rightarrow W) &\mapsto (f:V\rightarrow W).
\end{align*}
\item Given a symmetric preadditive category $\C,$ one can see $\Lei^*(\C)$ (the category of ULAs $V$ in $\C$ endowed with a normalized morphism $\gamma:V\rightarrow\II$) as a full subcategory of $\Br^\nu(\C)$ via 
\begin{align*}
\Lei^*(\C) &\hookrightarrow \Br^\nu(\C),\\
(V,[,],\nu) &\mapsto (V,\sigma_{Lei},\nu),\\
(f:V\rightarrow W) &\mapsto (f:V\rightarrow W).
\end{align*}
\end{enumerate}
\end{proposition}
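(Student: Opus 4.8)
The plan is to show, for each of the two assignments, that it defines a functor which is the identity on morphisms and whose essential image is a full subcategory; concretely this means checking well-definedness on objects and on morphisms, functoriality, and fullness. Faithfulness and functoriality will be immediate once well-definedness is settled, since both assignments act as the identity on the underlying morphisms, so the real content lies in verifying that objects and morphisms actually land in $\Br^\nu(\C)$ and that fullness holds.

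First I would handle the objects. In the associative case a UAA $(V,\mu,\nu)$ has a two-sided unit by definition, so Theorem~\ref{thm:Bar&Lei}(1)(a) makes $\sigma_{Ass}$ a pre-braiding on $V$; paired with the unit $\nu:\II\to V$ this exhibits $(V,\sigma_{Ass},\nu)$ as a genuine object of $\Br^\nu(\C)$. In the Leibniz case Theorem~\ref{thm:Bar&Lei}(2)(a) gives that $\sigma_{Lei}$ is an invertible braiding, so $(V,\sigma_{Lei},\nu)$ is likewise an object of $\Br^\nu(\C)$; the auxiliary datum $\gamma:V\to\II$ carried by objects of $\Lei^*(\C)$ plays no role here and is reserved for the fullness step.

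Next I would treat the morphisms. A morphism $f:V\to W$ in $\Alg(\C)$ (resp.\ $\Lei(\C)$) respects the multiplication and the unit; the first bullet of Lemma~\ref{prop:braid_nat} then says that $f$ is braided for the structural pre-braidings, and the relation $f\circ\nu_V=\nu_W$ says it is normalized in the sense of Definition~\ref{def:NormalMor}. Hence $f$ is a braided normalized morphism, i.e.\ a morphism of $\Br^\nu(\C)$, so the assignment is well-defined on morphisms. Since it leaves each morphism unchanged it visibly preserves identities and composition and is injective on every hom-set, giving functoriality and faithfulness for free.

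The only nontrivial step, and the one I expect to be the crux, is fullness, which I would extract from the second bullet of Lemma~\ref{prop:braid_nat}. Fullness is the assertion that every braided normalized $f:V\to W$ in $\C$ is already a morphism in $\Alg(\C)$ (resp.\ $\Lei(\C)$); equivalently that
$$\Hom_{\Br^\nu(\C)}\bigl((V,\sigma_{Ass},\nu),(W,\sigma_{Ass},\nu)\bigr)=\Hom_{\Alg(\C)}(V,W),$$
and its Leibniz analogue. The hypotheses of that lemma are met in both cases: for UAAs the unit $\nu$ is two-sided, and for objects of $\Lei^*(\C)$ the required normalized morphism $\gamma:V\to\II$ is part of the data (this is exactly why one passes to $\Lei^*(\C)$ rather than $\Lei(\C)$). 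The lemma then forces $f$ to respect the multiplication, so $f$ lies in $\Alg(\C)$ (resp.\ $\Lei(\C)$, hence in $\Lei^*(\C)$, since no further condition is imposed on its morphisms). Combined with the inclusion of hom-sets established above, this yields the displayed equality and therefore fullness, completing the argument.
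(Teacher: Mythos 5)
Your proof is correct and takes essentially the same route as the paper: the paper presents this proposition explicitly as a compact restatement of Lemma \ref{prop:braid_nat}, whose first bullet gives well-definedness on morphisms and whose second bullet gives fullness, with Theorem \ref{thm:Bar&Lei} supplying the pre-braided objects --- exactly the structure of your argument. Your observations that the two-sided unit (resp.\ the datum $\gamma$ in $\Lei^*(\C)$) is what makes the hypotheses of the lemma's second bullet available, and that morphisms of $\Lei^*(\C)$ need not interact with $\gamma$, are precisely the intended reading.
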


\subsection{The super trick}

The first bonus one generally gains when passing to abstract symmetric categories is the possibility to derive graded and super versions of algebraic results for free, thanks to the Koszul flip $\tau_{Koszul}$ from \eqref{eqn:KoszulFlip}. One clearly sees where to put signs, which is otherwise quite difficult to guess. Here is a typical example.

Take a \textbf{\textit{graded unital Leibniz algebra}} $(V,[,],\nu),$ i.e. an object of $\Lei$  $(\VectGrad),$ where $\VectGrad$ is the usual additive monoidal category of graded $\k$-vector spaces, endowed with the symmetric braiding $\tau_{Koszul}.$ Leibniz condition in this setting is
$$[v,[w,u]]=[[v,w],u]-(-1)^{\deg u \deg w}[[v,u],w]$$
for any homogeneous elements $v,w,u \in V.$ Cf. fig. \ref{pic:Lei} illustrating \eqref{eqn:Lei}, with the crossing on the right corresponding to the ``internal'' braiding $c_{V,V}=\tau_{Koszul}.$

Theorem \ref{thm:Bar&Lei} gives a braiding for $V$:
$$ \sigma_{V} :v\otimes w \longmapsto (-1)^{\deg v \deg w} w \otimes v + \one \otimes [v,w],$$
which, together with a Lie character $\epsilon:V_0\rightarrow \k,$ can be fed into the machinery from theorem \ref{thm:BraidedSimplHomCat}:
\begin{proposition}
\begin{enumerate}
\item 
A $\k$-linear graded unital Leibniz algebra $(V,[,],\nu)$ with a Lie character $\epsilon$ can be endowed with the degree $-1$ tensor differential
\begin{align*}
{^{\epsilon}}\! d(v_1\ldots v_n) &=\sum_{1\leqslant i < j \leqslant n} (-1)^{j-1+\alpha_{i,j}}  v_1\ldots v_{i-1} [v_i,v_j] v_{i+1}\ldots\widehat{v_j}\ldots v_n + \\
&+\sum_{1\leqslant j \leqslant n}(-1)^{j-1+\alpha_{0,j}}\epsilon (v_j) v_1\ldots \widehat{v_j}\ldots v_n,
\end{align*}
where $\alpha_{i,j}:=\deg(v_j)\sum_{i < k< j}\deg(v_k).$
\item
A $\k$-linear graded Leibniz algebra $(V,[,])$ with a non-unital Lie character $\epsilon$ can be endowed with the degree $-1$ tensor differential
$$ {^{\epsilon}}\! d(v_1\ldots v_n) =\sum_{1\leqslant i < j \leqslant n} (-1)^{j-1+\alpha_{i,j}}  v_1\ldots v_{i-1} [v_i,v_j] v_{i+1}\ldots\widehat{v_j}\ldots v_n.$$
\end{enumerate}
All the $v_i$'s are taken homogeneous here.
\end{proposition}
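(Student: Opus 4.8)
The plan is to recognize this proposition as a direct instantiation of the categorical machinery of subsection \ref{sec:cat_ex} in the symmetric additive monoidal category $\VectGrad$, taking the Koszul flip $\tau_{Koszul}$ of \eqref{eqn:KoszulFlip} as the symmetry $c$. First I would observe that $\VectGrad$ is additive and symmetric, so Theorem \ref{thm:Bar&Lei} applies: a graded unital Leibniz algebra $(V,[,],\nu)$ acquires the braiding $\sigma_{Lei}=c_{V,V}+\nu\otimes[,]$, which on homogeneous elements reads exactly $\sigma_V:v\otimes w\mapsto(-1)^{\deg v\deg w}w\otimes v+\one\otimes[v,w]$, and any Lie character $\epsilon$ is a braided character for it. Feeding $(V,\sigma_{Lei})$ and $\epsilon$ into Corollary \ref{crl:cat} (point 2) then yields at once a degree $-1$ tensor differential; the whole remaining content of the proposition is to evaluate its two summands $\epsilon_1\circ\underset{-c}{\cshuffle}^{1,n-1}$ and $\sum_{i<j}(-1)^{j-1}[,]_i\circ(\Id_i\otimes c_{V^{j-i-1},V}\otimes\Id_{n-j})$ on a homogeneous pure tensor $v_1\ldots v_n$ and to read off the Koszul signs.

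For the bracket summand I would compute the effect of $c_{V^{j-i-1},V}$, which transports the block $v_{i+1}\ldots v_{j-1}v_j$ to $v_j v_{i+1}\ldots v_{j-1}$ by sliding $v_j$ past $v_{i+1},\ldots,v_{j-1}$. Each elementary crossing contributes its Koszul factor, and these multiply to $(-1)^{\deg(v_j)\sum_{i<k<j}\deg(v_k)}=(-1)^{\alpha_{i,j}}$; the subsequent $[,]_i$ brackets the now-adjacent $v_i$ and $v_j$, so this summand contributes $(-1)^{j-1+\alpha_{i,j}}v_1\ldots v_{i-1}[v_i,v_j]v_{i+1}\ldots\widehat{v_j}\ldots v_n$, as required. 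For the $\epsilon$ summand I would recall that $\underset{-c}{\cshuffle}^{1,n-1}$ sums over the $n$ $(1,n-1)$-shuffles that single out one factor $v_j$ and carry it to the leftmost slot past $v_1,\ldots,v_{j-1}$; working with the \emph{negative} braiding $-c$ splits each of the $j-1$ crossings into a global $(-1)$ and a Koszul factor, so the accumulated sign is $(-1)^{j-1}(-1)^{\deg(v_j)\sum_{k<j}\deg(v_k)}=(-1)^{j-1+\alpha_{0,j}}$, after which $\epsilon_1$ extracts $\epsilon(v_j)$. This reproduces the first formula exactly; the second, non-unital assertion is the same computation with the $\epsilon$-summand removed, supplied by the non-unital clause of Corollary \ref{crl:cat} (point 2).

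The main (though essentially routine) obstacle is the sign bookkeeping: one must verify that the Koszul factors of the successive elementary crossings inside $c_{V^{j-i-1},V}$, respectively inside the negative co-shuffle, telescope into the single exponents $\alpha_{i,j}$ and $\alpha_{0,j}$, and that the global signs carried by $-c$ reconstruct the $(-1)^{j-1}$ already present. I expect the strand-counting picture underlying Figure \ref{pic:BraidedAction} to make this transparent, since the Koszul convention simply assigns to each crossing the sign determined by the degrees of the two meeting strands. As a consistency check, I would note that $\epsilon$ is concentrated in degree $0$, so $\epsilon(v_j)\neq0$ forces $\deg v_j=0$ and hence $\alpha_{0,j}=0$; the factor $(-1)^{\alpha_{0,j}}$ is therefore immaterial in the second sum and is kept only for uniformity of notation, which confirms that no sign has been misplaced.
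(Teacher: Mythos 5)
Your proposal is correct and follows exactly the paper's own route: the proposition is stated there as a direct instantiation of Theorem \ref{thm:Bar&Lei} and Corollary \ref{crl:cat} in $\VectGrad$ with the Koszul flip as symmetry, and the paper's accompanying remark attributes the $(-1)^{\alpha_{i,j}}$ factors to the Koszul braiding and the $(-1)^{j-1}$ to the opposite braiding in $\Csh^{1,n-1}$, which is precisely your sign bookkeeping. Your closing observation that $\epsilon$ lives on the degree-zero part (so $\alpha_{0,j}=0$ whenever $\epsilon(v_j)\neq 0$) is a sound consistency check, matching the paper's $\epsilon\colon V_0\rightarrow\k$.
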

Observe that the $(-1)^{\alpha_{i,j}}$ part of the sign comes from the Koszul braiding, while $(-1)^{j-1}$ appears because we take the opposite braiding when defining $({^\epsilon}\! d)_n := \epsilon_1 \circ \Csh^{1,n-1}.$ 

\medskip
{Leibniz superalgebras} are treated similarly: one has just to work in the category of super vector spaces over $\k$ (cf. \cite{LeiSuper}). One thus recovers the \textit{\textbf{Leibniz superalgebra homology}}, which is a lift of the Lie superalgebra homology.

Similarly, one gets for free the \textit{\textbf{color Leibniz algebra homology}} (cf. \cite{LeiColor}, or \cite{LieColor} for a Lie version). Concretely, take a finite abelian group $\Gamma$ endowed with an antisymmetric bicharacter $\chi.$ The category $_\Gamma\!\Vect$ of $\k$-vector spaces graded over $\Gamma$ is symmetric additive, with the usual $\Gamma$-graded tensor product, the zero-graded $\k$ as its identity object and, as a braiding, the \emph{color flip}
$$\tau_{color} : v\otimes w\longmapsto \chi(f,g) w\otimes v$$
for homogeneous $v$ and $w$ graded over $f$ and $g \in \Gamma$ respectively. Then color Leibniz algebras are precisely Leibniz algebras in $_\Gamma\!\Vect$

See also \cite{Westrich} for an excellent survey of different types of braided Lie algebras.

\subsection{Co-world, or the world upside down}\label{sec:co}

One more nice feature of the categorical approach is an automatic treatment of \textit{\textbf{dualities}}. The most common notion of duality, the ``upside-down'' one, is described here, with the cobar complex for coalgebras (cf. \cite{Cartier}, \cite{Doi}) providing an example. In the monoidal context, one has two more dualities, the ``right-left'' and the combined ones, treated in the next subsection.

\begin{definition}
Given a category $\C,$ its \emph{dual (or opposite) category} $\C^{\op}$ is constructed by keeping the objects of $\C$ and reversing all the arrows. One writes $f^{\op} \in \Hom_{\C^{\op}}(W,V)$ for the morphism in $\C^{\op}$ corresponding to an $f\in \Hom_{\C}(V,W).$
\end{definition}

We sometimes call $\C^{\op}$ a \emph{co-category} in order to avoid confusion with other notions of duality. Observe that this construction is involutive: $(\C^{\op})^{\op} = \C.$ 

The \textbf{\textit{duality principle}} (cf. \cite{Cat}, section II.2) tells that a ``categorical'' \underline{theorem} for $\C$ implies a dual theorem for $\C^{\op}$ by reversing all the arrows and the order of arrows in every composition. Our aim here is to apply this principle to theorems \ref{thm:BraidedSimplHomCat} and \ref{thm:Bar&Lei}.

To get a notion of duality for categorical \underline{structures}, it suffices to place them into the co-category. For example, a \emph{counital co-Leibniz coalgebra} (= co-ULA) in a symmetric preadditive category $\C$ is an object $V$ together with morphisms $\partial:V\rightarrow V\otimes V$ and $\varepsilon: V \rightarrow \II,$
such that $(V,\partial^{\op},\varepsilon^{\op})$ is a ULA in $\C^{\op}.$ In other words, $\partial$ and $\varepsilon$ satisfy
\begin{align}
(\Id_V\otimes\partial)\circ\partial &= (\partial\otimes\Id_V)\circ\partial - (\Id\otimes c_{V,V})\circ(\partial\otimes\Id)\circ\partial,\label{eqn:coLei}\\
(\Id_V\otimes\varepsilon)\circ\partial &= (\varepsilon\otimes\Id_V)\circ\partial=0.\notag
\end{align}

A convenient way to handle the ``upside-down'' duality is the graphical one: changing from $\C$ to $\C^{\op}$ consists simply in {turning all the diagrams upside down}, i.e. taking a \textit{\textbf{horizontal mirror image}}. Here is an example for the {co-Leibniz} condition \eqref{eqn:coLei} (cf. fig. \ref{pic:Lei}):
\begin{center}
\begin{tikzpicture}[scale=0.2]
\draw (0,0)--(2,-2);
\draw (4,0)--(2,-2);
\draw (3,-1)--(2,0);
\draw (2,-3)--(2,-2);
\fill[teal] (3,-1) circle (0.3);
\fill[teal] (2,-2) circle (0.3);
\node at (5,-1.5) {$=$};
\node at (6,-1.5) {};
\end{tikzpicture}
\begin{tikzpicture}[scale=0.2]
\draw (0,0)--(2,-2);
\draw (4,0)--(2,-2);
\draw (1,-1)--(2,0);
\draw (2,-3)--(2,-2);
\fill[teal] (1,-1) circle (0.3);
\fill[teal] (2,-2) circle (0.3);
\node at (5,-1.5) {$-$};
\node at (6,-1.5) {};
\end{tikzpicture}
\begin{tikzpicture}[scale=0.2]
\draw (0,0)--(2,-2);
\draw (4,0)--(1,-1);
\draw (2,-2)--(2,0);
\draw (2,-3)--(2,-2);
\fill[teal] (1,-1) circle (0.3);
\fill[teal] (2,-2) circle (0.3);
\node at (5,-3) {.};
\end{tikzpicture}
   \captionof{figure}{Co-Leibniz condition }\label{pic:CoLei}
\end{center}

\medskip %
We now make a list of dualities for the categorical structures relevant for this paper: 
 
\begin{center}
\begin{tabular}{|c|c|}
\hline 
 unital associative algebra $(V,\mu,\nu)$ & co-UAA $(V,\mu^{\op},\nu^{\op})$\\
\hline
 unital Leibniz algebra $(V,[,],\nu)$ & co-ULA $(V,[,]^{\op},\nu^{\op})$\\
\hline
 algebra character $\varphi$ for $(V,\mu,\nu)$ & coalgebra co-character $\varphi^{\op}$ for  $(V,\mu^{\op},\nu^{\op})$\\
 \hline
Lie character $\varphi$ for $(V,[,],\nu)$ & co-Lie co-character $\varphi^{\op}$ for  $(V,[,]^{\op},\nu^{\op})$\\
\hline
pre-braiding $\sigma$ for $V$ & pre-braiding $\sigma^{\op}$ for $V$\\
\hline
braided character $\epsilon$ for $(V,\sigma)$ & braided co-character $\epsilon^{\op}$ for $(V,\sigma^{\op})$\\ 
\hline
(bi)degree $-1$ tensor (bi)bidifferential for $V$ & (bi)degree $1$ tensor (bi)differential for $V$\\
\hline
\end{tabular}
   \captionof{table}{Categorical duality for structures}\label{tab:CatDual}
\end{center}

The subcategory of co-UAAs and co-ULAs in $\C$ are denoted by $\coAlg(\C)$ and $\coLei(\C)$ respectively.
 
 \begin{remark}\label{rmk:DualityForShuffle}
For a pre-braided object $(V,\sigma)$ and the action \eqref{eqn:BnAction} of $B_n^+$ on $V^{\otimes n},$ one has 
$$(T_s^{\sigma})^{\op} = T_{s^{-1}}^{(\sigma^{\op})} \in End_{\C^{\op}}(V^{\otimes n}) \qquad \forall s \in S_n.$$
 Thus, assuming the category preadditive, the definition \eqref{eqn:cosh} of \textit{\textbf{quantum co-shuffle comultiplication}} is translated as $\csh^{p,q}=({\underset{\sigma^{\op}}{\shuffle}}^{p,q})^{\op},$ automatically giving all the properties of this structure.
\end{remark} 

 \medskip
Everything is now ready for dualizing theorems \ref{thm:BraidedSimplHomCat} and \ref{thm:Bar&Lei}. We present only short versions of these results here, leaving the dualization of the points concerning simplicial and pre-braided coalgebra structures to the reader.
 
 \medskip %
\medskip
\textbf{Theorem \ref*{thm:BraidedSimplHomCat}$^\mathbf{co}$.}
\textit{
Let $\C$ be a preadditive monoidal category. For any pre-braided object $(V,\sigma)$ with braided co-characters $e$ and $c,$ a bidegree $1$ tensor bidifferential for $V$ can be defined by
\begin{align*}
({_e}\! d)^n &:=\Sh^{1,n} \circ (e \otimes \Id_n),\\
(d_c)^n &:=(-1)^n \Sh^{n,1} \circ (\Id_n \otimes c).
\end{align*} 
}

\bigskip\bigskip %
\medskip
\textbf{Theorem \ref*{thm:Bar&Lei}$^\mathbf{co}$.}
\textit{
\begin{enumerate}
\item Take a counital coassociative coalgebra $(V,\Delta,\varepsilon)$ in a monoidal category $\C.$
\begin{enumerate}
\item $V$ can be endowed with a pre-braiding $\sigma_{coAss} := \varepsilon \otimes \Delta.$
\item Any coalgebra co-character $e\in \Hom_{\coAlg(\C)}(\II,V)$ is a braided co-character for $(V,\sigma_{coAss}).$
\end{enumerate}
\item Take a counital co-Leibniz coalgebra $(V,\partial,\varepsilon)$ in a symmetric preadditive category $(\C,\otimes,\II,c).$ 
\begin{enumerate}
\item $V$ can be endowed with a braiding $\sigma_{coLei}:= c_{V,V}+ \varepsilon \otimes \partial.$
\item Any co-Lie co-character $e\in \Hom_{\coLei(\C)}(\II,V)$ is a braided co-character for $(V,\sigma_{coLei}).$
\end{enumerate}
\end{enumerate}
}
\medskip

A graphical depiction of, for instance, $\sigma_{coAss}$ is by construction the horizontal mirror image of the diagram one had for UAAs:
  \begin{center}
\begin{tikzpicture}[scale=0.6]
 \draw (0,0) -- (1,-1);
 \draw (1,0) -- (0.5,-0.5);
 \draw (0.3,-0.7) -- (0,-1);
 \node at (0.5,-0.5) [right]{$\Delta$};
 \node at (0.3,-0.7) [left]{$\varepsilon$};
 \fill[orange] (0.3,-0.7) circle (0.1);
 \fill[teal] (0.5,-0.5) circle (0.1);
 \node at (2,-1) {.};
\end{tikzpicture}
   \captionof{figure}{$\sigma_{coAss}=HorMirror(\sigma_{Ass})$}\label{pic:BrCoUAA}
\end{center} 

A co-version of corollary \ref{crl:cat} is then formulated in the evident way, with dual explicit formulas. ``If and only if'' lemmas \ref{thm:encode_cat} and \ref{prop:braid_nat} are also dualized directly. In particular, the pre-braidings from the previous theorem encode the co-associativity (resp. co-Leibniz) condition.

\medskip

We finish this section with some remarks proper to our favorite category $\Vect$ (everything remaining valid, as usual, in $\kVect$). 

\begin{lemma} 
In $\Vect,$ a map $e:\k\rightarrow V, \alpha\mapsto \alpha \mathbf{e}$ for a co-UAA $(V,\Delta,\epsilon)$ is a non-unital coalgebra co-character if and only if $\mathbf{e} \in V$ is {\emph{group-like}}, i.e. $\Delta(\mathbf{e})=\mathbf{e}\otimes \mathbf{e}.$ A non-unital Lie co-character for a co-ULA $(V,\partial,\epsilon)$ corresponds to an $\mathbf{e} \in \Ker(\partial).$ 
\end{lemma}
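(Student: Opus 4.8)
The plan is to unwind both equivalences through the \emph{duality principle} of Table~\ref{tab:CatDual}, reducing each to the defining equation of a non-unital (co-)character evaluated on the generator $1\in\k$. Recall that a non-unital coalgebra co-character $e$ for the co-UAA $(V,\Delta,\epsilon)$ is, by definition, a morphism $e:\k\rightarrow V$ whose opposite $e^{\op}:V\rightarrow\k$ is a non-unital algebra character for the UAA $(V,\Delta^{\op},\epsilon^{\op})$ in $\C^{\op}$, i.e. a morphism in $\mathbf{Alg}(\C^{\op})$ respecting the multiplication $\Delta^{\op}$. So the first step is to write out this multiplicativity of $e^{\op}$ (namely, $e^{\op}$ composed after $\Delta^{\op}$ equals $e^{\op}\otimes e^{\op}$, using the identification $\k\otimes\k\simeq\k$) and then pull it back to $\C$.

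For the pull-back I would use the two basic rules of the opposite category: composition reverses order, $f^{\op}\circ g^{\op}=(g\circ f)^{\op}$, while the tensor product is unchanged, so $e^{\op}\otimes e^{\op}=(e\otimes e)^{\op}$. Applying these to the multiplicativity condition turns ``$e^{\op}$ after $\Delta^{\op}$'' into $(\Delta\circ e)^{\op}$, so the condition reads $(\Delta\circ e)^{\op}=(e\otimes e)^{\op}$, hence $\Delta\circ e = e\otimes e$ as morphisms $\k\rightarrow V\otimes V$ in $\C=\Vect$. Evaluating both sides on $1\in\k$ and using $e(\alpha)=\alpha\mathbf{e}$ gives $\Delta(\mathbf{e})=\mathbf{e}\otimes\mathbf{e}$, which is exactly the group-like condition; since every step is an equivalence, this settles the first assertion.

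The Leibniz statement follows by the identical mechanism. A non-unital Lie co-character $e:\k\rightarrow V$ for the co-ULA $(V,\partial,\epsilon)$ is, by duality, a morphism whose opposite $e^{\op}$ is a non-unital Lie character for $(V,\partial^{\op})$ in $\C^{\op}$, i.e. $e^{\op}$ annihilates the bracket: $e^{\op}$ composed after $\partial^{\op}$ is $0$. The same order-reversal rule gives $(\partial\circ e)^{\op}=0$, hence $\partial\circ e=0$, and evaluating on $1\in\k$ yields $\partial(\mathbf{e})=0$, i.e. $\mathbf{e}\in\Ker(\partial)$. The only real care needed anywhere is bookkeeping the opposite-category conventions -- the reversal of composition order together with the identification $\k\otimes\k\simeq\k$ -- after which both equivalences are immediate; there is no genuine obstacle beyond this.
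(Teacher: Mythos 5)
Your proof is correct: unwinding the opposite-category conventions (order reversal of composition, tensor product unchanged, $\k\otimes\k\simeq\k$) reduces both conditions to $\Delta\circ e = e\otimes e$ and $\partial\circ e = 0$, i.e.\ $\Delta(\mathbf{e})=\mathbf{e}\otimes\mathbf{e}$ and $\mathbf{e}\in\Ker(\partial)$. This is exactly the argument the paper intends -- it states the lemma without proof precisely because it is this direct translation of the definitions of (non-unital) characters through the duality of table \ref{tab:CatDual} -- so there is nothing to add.
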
 

Further, ``{non-unital}'' remarks \ref{rmk:nonunit} and \ref{rmk:nonunitLei} admit co-versions. To create a counit for a coassociative or co-Leibniz coalgebra $(V,\delta)$ (resp. $(V,\partial)$), one extends it by adding a formal element $\wV:=V\oplus \k\one,$ modifying the comultiplication:
\begin{align*}
\Delta(v) &= \delta(v)+\one \otimes v+ v \otimes \one  \qquad \forall v \in V,\\
\Delta(\one) &=\one \otimes \one
\end{align*}
in the coassociative coalgebra case, and 
$$\partial (\one)=0,$$
keeping the original $\partial$ on $V,$ in the co-Leibniz case. The pre-braiding $\sigma_{coAss}$ (resp. $\sigma_{coLei}$) on $\wV$ now characterizes the coassociativity (resp. co-Leibniz) condition for $V.$ Further, the application $\varepsilon\in (\wV)^*$ given by $\varepsilon(V)\equiv 0, \varepsilon (\one)=1$ is a coalgebra (resp. co-Lie) counit for $\Delta$ (resp. $\partial$), and $\one$ is a  group-like element (resp. $\one \in \Ker(\partial)$). 

The left braided differentials obtained in this setting for coalgebras are described in

\begin{proposition}\label{thm:cobar}
Given a $\k$-linear coalgebra $(V,\delta),$ extend it to a counital one $(\wV, \Delta, \varepsilon)$ as  above. Then the group-like $\one$ gives, via theorem \ref*{thm:BraidedSimplHomCat}$^\mathbf{co}$, the following differential on $T(\wV)$:
$${_{\one}}\! d(v_1\ldots v_n) =\one v_1\ldots v_n +\sum_{i=1}^{n} (-1)^{i}v_1\ldots v_{i-1}\Delta(v_i)v_{i+1}\ldots v_n.$$
The ideal $I_\one$ of the tensor algebra $T(\wV)$ generated by the element $\one$ is ${_{\one}}\! d$-stable.
The differential induced on $T(\wV)/I_\one \simeq T(\wV/\k\one) \simeq T(V)$ is
$$\widetilde{{_{\one}}\! d}(v_1\ldots v_n) = \sum_{i=1}^{n} (-1)^{i}v_1\ldots v_{i-1}\delta(v_i)v_{i+1}\ldots v_n.$$
\end{proposition}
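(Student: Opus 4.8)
The plan is to apply Theorem \ref{thm:BraidedSimplHomCat}$^{\mathbf{co}}$ to the pre-braided object $(\wV,\sigma_{coAss})$ and to the group-like element $\one$, then to unwind the resulting quantum shuffle into the announced formula, and finally to pass to the quotient by $I_\one$. First I would record the setting: by construction $(\wV,\Delta,\varepsilon)$ is a counital coassociative coalgebra with $\one$ group-like, $\Delta(\one)=\one\otimes\one$, and $\varepsilon(\one)=1$; hence, by the lemma characterising coalgebra co-characters in $\Vect$ as group-like elements together with Theorem \ref{thm:Bar&Lei}$^{\mathbf{co}}$, the element $\one$ (viewed as $\one:\k\to\wV$) is a braided co-character for $\sigma_{coAss}=\varepsilon\otimes\Delta$. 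Theorem \ref{thm:BraidedSimplHomCat}$^{\mathbf{co}}$ then produces the degree $+1$ square-zero differential $({_{\one}}\! d)^n=\Sh^{1,n}\circ(\one\otimes\Id_n)$, i.e. $\ov\mapsto\one\Sh\ov$, the quantum shuffle being taken with respect to $-\sigma_{coAss}$. In particular no separate verification of $({_{\one}}\! d)^2=0$ is needed, as it is part of the theorem.

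The heart of the proof is the explicit evaluation of this shuffle. I expand $\Sh^{1,n}(\one\otimes\ov)=\sum_{s\in Sh_{1,n}}T_{s}^{-\sigma}(\one\otimes\ov)$ over the $n+1$ shuffles of $Sh_{1,n}$, indexed by the number $k\in\{0,\dots,n\}$ of factors of $\ov$ that $\one$ moves past; the corresponding lift is $(\sigma_{coAss})_k\circ\cdots\circ(\sigma_{coAss})_1$, a word of length $k$, which therefore carries the sign $(-1)^k$ coming from the use of $-\sigma_{coAss}$. The term $k=0$ is $\one\ov$. For $k\ge 1$ I claim, by induction on $k$, that
\[
(\sigma_{coAss})_k\circ\cdots\circ(\sigma_{coAss})_1(\one\otimes v_1\cdots v_n)=v_1\cdots v_{k-1}\,\Delta(v_k)\,v_{k+1}\cdots v_n.
\]
The base case uses $\varepsilon(\one)=1$: $(\sigma_{coAss})_1(\one\otimes v_1\cdots v_n)=\varepsilon(\one)\Delta(v_1)\,v_2\cdots v_n=\Delta(v_1)\,v_2\cdots v_n$. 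In the inductive step the next crossing acts, writing $\Delta(v_{k-1})=\sum (v_{k-1})_{(1)}\otimes(v_{k-1})_{(2)}$, on the right leg $(v_{k-1})_{(2)}$ and on $v_k$ as $\sigma_{coAss}((v_{k-1})_{(2)}\otimes v_k)=\varepsilon((v_{k-1})_{(2)})\,\Delta(v_k)$; the counit axiom $\sum(v_{k-1})_{(1)}\varepsilon((v_{k-1})_{(2)})=v_{k-1}$ then collapses the trailing leg and slides the comultiplication one step to the right. Summing the $n+1$ contributions with signs $(-1)^k$ yields exactly the stated formula for ${_{\one}}\! d$. This counit-telescoping is the step I expect to be the main obstacle: the whole formula hinges on these cancellations and their signs being bookkept correctly.

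Finally I treat the ideal. Writing $I_\one$ as the span of tensor words containing at least one factor $\one$ (so that the standard identifications $T(\wV)/I_\one\cong T(\wV/\k\one)\cong T(V)$ hold, using $\wV=V\oplus\k\one$), I check ${_{\one}}\! d$-stability termwise on a word $\ov=v_1\cdots v_n$ possessing a factor $v_j=\one$: the prepended term $\one\ov$ and every term with $i\ne j$ still contain the factor $v_j=\one$, while the term $i=j$ produces $\Delta(\one)=\one\otimes\one$ by group-likeness, again landing in $I_\one$; hence ${_{\one}}\! d(I_\one)\subseteq I_\one$. For the induced map I reduce ${_{\one}}\! d(\ov)$ modulo $I_\one$ for $\ov=v_1\cdots v_n$ with all $v_i\in V$: the summand $\one\ov$ vanishes, and substituting the extension formula $\Delta(v_i)=\delta(v_i)+\one\otimes v_i+v_i\otimes\one$ kills the two summands containing $\one$, leaving
\[
\widetilde{{_{\one}}\! d}(v_1\cdots v_n)=\sum_{i=1}^{n}(-1)^{i}v_1\cdots v_{i-1}\,\delta(v_i)\,v_{i+1}\cdots v_n,
\]
as claimed. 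The remaining verifications (that the two displayed isomorphisms are algebra isomorphisms and that $\delta$ is well defined on $V$) are routine and follow from the construction of the extension $(\wV,\Delta,\varepsilon)$.
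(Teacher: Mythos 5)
Your proposal is correct and follows exactly the route the paper intends: identify $\one$ as a braided co-character for $\sigma_{coAss}=\varepsilon\otimes\Delta$ via group-likeness and theorem \ref{thm:Bar&Lei}$^{\mathbf{co}}$, expand the $Sh_{1,n}$-shuffle of theorem \ref{thm:BraidedSimplHomCat}$^{\mathbf{co}}$ with the sign $(-1)^k$ from $-\sigma$ and telescope with the counit, then check $I_\one$-stability termwise and reduce $\Delta$ to $\delta$ modulo $I_\one$. The paper states the proposition as a direct application of these theorems without writing out the computation, so your argument supplies precisely the intended details.
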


One eagerly recognizes the \textbf{\textit{cobar differential}} for coalgebras.

\subsection{Right-left duality}\label{sec:r-l}

One more notion of duality is available for a monoidal category $(\C,\otimes,\II).$ One can simply change its tensor product to the opposite one: $ V \opp W := W \otimes V$ for objects, and similarly for morphisms.
We call this new monoidal category \textbf{\textit{monoidally dual}} to $\C,$ denoting it by $\C^{\opp}$ (there seem to be no universally accepted notation, some authors even using $\C^{\op}$ here and another notation for co-categories). Graphically, the categories $\C$ and $\C^{\opp}$ differ by the {\textit{vertical mirror symmetry}} for all diagrams.

Applying monoidal duality to a co-category $\C^{\op},$ one gets $\C^{\oppp}:=(\C^{\op})^{\opp} \simeq (\C^{\opp})^{\op}.$ Graphically, it corresponds to the {\textit{central symmetry}}.

Similarly to what we have seen for $\C^{\op},$ all ``categorical'' notions and theorems have monoidally dual versions in $\C^{\opp}.$ This gives in particular \textbf{\textit{right differentials}}
$(d^\epsilon)_n,(d_e)^n,$ monoidally dual to the left ones $({^\epsilon}\! d)_n,({_e}\! d)^n.$ Note that these differentials should be endowed with a sign (cf. theorem \ref{thm:cuts}) if one wants a bidifferential structure.

One also has \textbf{\textit{right braidings}}, monoidally dual to those from theorems \ref*{thm:Bar&Lei} and \ref*{thm:Bar&Lei}$^\mathbf{co}$. In particular, a new braiding emerges for UAAs:
$$\sigma_{Ass}^{r} := \mu \otimes \nu = VertMirror(\sigma_{Ass}) =
\begin{tikzpicture}[scale=0.5]
 \node at (-0.5,0.5) {};
 \draw  (0,0) -- (0.5,0.5);
 \draw  (1,0) -- (0,1);
 \draw  (0.7,0.7) -- (1,1);
 \node at (0.5,0.5) [left]{$\mu$};
 \node at (0.7,0.7) [right] {$\nu$}; 
 \fill[orange] (0.7,0.7) circle (0.1);
 \fill[teal] (0.5,0.5) circle (0.1);
 \node at (2,0.2) {.};
\end{tikzpicture}$$

Remark that the Leibniz algebra structure is not right-left symmetric: a Leibniz algebra in $\C^{\opp}$ is in fact a \textbf{\textit{left Leibniz algebra}} in $\C$ (cf. \cite{LoPi}). Thus one automatically obtains braided homology theories for left Leibniz algebras.

\section{Braided modules and homologies with coefficients}\label{sec:coeffs}

We introduce here the notion of (bi)modules over a pre-braided object $V$ in a monoidal category. These ``braided'' modules generalize, in quite an unexpected manner, the usual notions of (bi)modules for algebraic structures. Since at the same time a braided module generalizes a braided character, one naturally arrives to homologies of pre-braided objects with coefficients. As particular cases, we point out Hochschild and Chevalley-Eilenberg complexes. 

Fix a \underline{monoidal category} $(\C,\otimes,\II).$

\begin{definition}
\begin{itemize}
\item A \emph{right module} over a pre-braided object $(V, \sigma)$ is an object $M \in \Ob (\C)$ equipped with a morphism $\rho:M\otimes V \rightarrow M$ satisfying
\begin{equation}\label{eqn:BrMod}
\rho \circ (\rho \otimes \Id_V)=\rho \circ (\rho \otimes \Id_V)\circ (\Id_M\otimes \sigma):M\otimes V\otimes V\rightarrow M,
\end{equation}
\begin{center}
\begin{tikzpicture}[scale=0.3]
 \draw[olive,ultra thick] (0,0) -- (0,3);
 \draw (1,0) -- (0,1);
 \draw (2,0) -- (0,2);
 \node at (0,2) [left]{$\rho$};
 \node at (0,1) [left]{$\rho$};
 \fill[teal] (0,2) circle (0.2);
 \fill[teal] (0,1) circle (0.2); 
 \node at (2,0) [below] {$V$};
 \node at (1,0) [below] {$V$};
 \node at (0,0) [below] {$M$};
 \node at (0,3) [above] {$M$};
\node  at (4,1.5){$=$};
\end{tikzpicture}
\begin{tikzpicture}[scale=0.3]
\node  at (-2,1.5){};
 \draw[olive,ultra thick] (0,0) -- (0,3);
 \draw (1,0) -- (7/8,0.25);
 \draw (0.5,1) -- (0,2);
 \draw (2,0) -- (0,1);
 \node at (0,2) [left]{$\rho$};
 \node at (0,1) [left]{$\rho$};
 \fill[teal] (0,2) circle (0.2);
 \fill[teal] (0,1) circle (0.2); 
 \node at (2,0) [below] {$V$};
 \node at (1,0) [below] {$V$};
 \node at (0,0) [below] {$M$};
 \node at (0,3) [above] {$M$};
 \node at (1.2,1.2) {$\sigma$};
 \node at (3,0) {.};
\end{tikzpicture}
   \captionof{figure}{Braided module}\label{pic:BrMod}
\end{center}
We talk about \emph{braided $V$-modules} when the pre-braiding $\sigma$ is clear from the context.
\item A \emph{left module} is a right one in $\C^{\opp}.$
\item A right (or left) \emph{comodule} is a right (resp. left) module in $\C^{\op}.$
\item A \emph{braided $V$-module morphism} is a morphism $\varphi$ between braided $V$-modules $(M,\rho)$ and $(N,\pi)$ such that $\varphi \circ \rho =\pi \circ (\varphi \otimes \Id_V):M\otimes V \rightarrow N.$
\end{itemize}
\end{definition}

Start as usual with a trivial example: in a preadditive category, any object $M$ equipped with the zero map $M\otimes V \rightarrow M$ is a module over any pre-braided object $(V,\sigma).$ We further interpret our new notion in more complicated settings from section \ref{sec:examples}.

\begin{example}\label{ex:BrMod}
\begin{enumerate}
\item One recovers the notion of (anti)commuting operators on $M$ when $\sigma = \pm \tau.$
\item Take $\C = \Set,$ and as a pre-braiding on a set $S$ take $\sigma_{\lhd}$ from \eqref{eqn:RackBraid}, coming from a self-distributive operation $\lhd.$ Condition \eqref{eqn:BrMod} becomes
$$(m \lhd a) \lhd b = (m \lhd b) \lhd  (a \lhd b) \qquad \forall m \in M, a,b \in S, $$
which defines precisely a \textbf{\textit{rack module}}  (= the \textit{rack-set} from \cite{Kamada3}, or the \textit{shadow} from \cite{ChangNelson}), having a knot-theoretical motivation. 
\item Any UAA $(V,\mu,\nu)$ in $\C$ comes with the pre-braiding $\sigma_{Ass}$ from \eqref{eqn:sigmaUAA}. Take a right module $(M,\rho)$ which we suppose \textbf{\textit{normalized}} here, i.e. 
\begin{equation}\label{eqn:NormalAction}
\rho \circ (\Id_M \otimes \nu) = \Id_M
\end{equation}
(morally, ``the unit acts by identity''). Condition \eqref{eqn:BrMod} becomes
$$\rho \circ (\rho \otimes \Id_V)=\rho \circ (\Id_M\otimes \mu),$$
giving the familiar notion of \textbf{\textit{module over an associative algebra}}.
\item Take a ULA $(V,[,],\nu)$ in a symmetric preadditive category $\C$. Endow $V$ with the braiding $\sigma_{Lei}$ from \eqref{eqn:sigmaULA}. Take a normalized right module $(M,\rho).$ Condition \eqref{eqn:BrMod} becomes
$$\rho \circ (\rho \otimes \Id_V)=\rho \circ (\rho \otimes \Id_V)\circ (\Id_M\otimes c_{V,V})+ \rho \circ (\Id_M\otimes [,]),$$
giving the familiar notion of \textbf{\textit{module over a Leibniz algebra}} (cf. \cite{Cyclic}).
\end{enumerate} 
\end{example} 

Note that, dually, left modules over associative or left Leibniz algebras are particular cases of left modules over pre-braided objects.

\medskip
Now, returning to the general monoidal category setting, try a special choice of $M$:
\begin{lemma}
Take a pre-braided object $(V, \sigma)$ in $\C.$ For a morphism $\epsilon : V=\II \otimes V = V \otimes \II \rightarrow \II,$ the following conditions are equivalent:
\begin{enumerate}
\item $\epsilon$ defines a right braided $V$-module $\II$;
\item $\epsilon$ defines a left braided $V$-module $\II$;
\item $\epsilon$ is a braided character.
\end{enumerate}
\end{lemma}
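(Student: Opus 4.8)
The plan is to show that each of the three conditions unwinds to the single equation $(\epsilon \otimes \epsilon) \circ \sigma = \epsilon \otimes \epsilon$, which is the defining property of a braided character in Definition \ref{def:WeakBr}. The starting observation is that, since $\II \otimes V = V = V \otimes \II$, a right action $\rho : \II \otimes V \to \II$ of $V$ on the unit object is literally the same datum as a morphism $\epsilon := \rho : V \to \II$, and likewise a left action $\lambda : V \otimes \II \to \II$ is the same as a morphism $\epsilon := \lambda : V \to \II$. Thus there is nothing to match between (1), (2) and (3) at the level of the underlying morphism; all the content lies in comparing the three defining equations.

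For the equivalence (1) $\Leftrightarrow$ (3), I would specialize the braided-module axiom \eqref{eqn:BrMod} to $M = \II$. Under the identifications $\II \otimes V \otimes V = V \otimes V$ and $\II \otimes V = V$, the map $\Id_\II \otimes \sigma$ becomes $\sigma$, and $\rho \otimes \Id_V$ becomes $\epsilon \otimes \Id_V : V \otimes V \to V$. By functoriality of $\otimes$ the left-hand side $\rho \circ (\rho \otimes \Id_V)$ collapses to $\epsilon \otimes \epsilon : V \otimes V \to \II$, while the right-hand side collapses to $(\epsilon \otimes \epsilon) \circ \sigma$. Hence \eqref{eqn:BrMod} for $\II$ is exactly the braided character equation.

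For (2) $\Leftrightarrow$ (3), I would use that a left module is by definition a right module in the monoidally dual category $\C^{\opp}$, where $A \opp B = B \otimes A$. A preliminary point to record is that the same morphism $\sigma$ remains a pre-braiding in $\C^{\opp}$: unwinding the tensor reversal, $\sigma \opp \Id_V$ and $\Id_V \opp \sigma$ correspond to $\Id_V \otimes \sigma$ and $\sigma \otimes \Id_V$ in $\C$, so the Yang--Baxter equation is carried to itself. Rewriting \eqref{eqn:BrMod} in $\C^{\opp}$ and translating back then produces the expected left-module axiom $\lambda \circ (\Id_V \otimes \lambda) = \lambda \circ (\Id_V \otimes \lambda) \circ (\sigma \otimes \Id_M)$, and specializing to $M = \II$ (now via $V \otimes \II = V$ and $\sigma \otimes \Id_\II = \sigma$) reduces it once more to $(\epsilon \otimes \epsilon) \circ \sigma = \epsilon \otimes \epsilon$.

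The only delicate part is the bookkeeping in the monoidal-dual computation: one must track the tensor reversal carefully to be sure that $\sigma$ itself --- rather than some conjugate of it --- is the relevant pre-braiding in $\C^{\opp}$, and that the coherence isomorphisms $\II \otimes V \cong V \cong V \otimes \II$ invoked on both sides are the canonical unit constraints, so that no hidden morphism sneaks in. Once this is verified, all three equations literally coincide and the equivalence is immediate.
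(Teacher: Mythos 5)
Your proof is correct, and it is exactly the definitional unwinding that the paper intends: the lemma is stated there without proof, and your reduction of all three conditions to $(\epsilon\otimes\epsilon)\circ\sigma=\epsilon\otimes\epsilon$ --- via the interchange law for (1), and via the passage to $\C^{\opp}$ (under which $\sigma$ remains a pre-braiding and the right-module axiom becomes $\lambda\circ(\Id_V\otimes\lambda)=\lambda\circ(\Id_V\otimes\lambda)\circ(\sigma\otimes\Id_M)$) for (2) --- is the intended verification. The only remark is that your final ``delicate'' coherence point is automatic here: the paper works in a \emph{strict} monoidal category, so $\II\otimes V=V=V\otimes\II$ are genuine equalities and no unit constraints can sneak in.
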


 This observation can be generalized to endow each tensor power of $V$ with a braided $V$-module structure using a braided character $\epsilon.$ Recall notations $\varphi_i$ from \eqref{eqn:phi_i}.

\begin{proposition}\label{thm:adjoint_module}
Given a pre-braided object $(V, \sigma)$ with a braided character $\epsilon,$ the map
$${^\epsilon}\!\pi:=\epsilon_1 \circ \ssigma_{V^{\otimes n},V} : V^{\otimes n} \otimes V\rightarrow V^{\otimes n}$$
defines a right braided $V$-module structure on $V^{\otimes n}.$ The braiding $\sigma$ is extended here to arbitrary powers of $V$ as in remark \ref{rmk:br_tensor}.
\end{proposition}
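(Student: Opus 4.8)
The plan is to verify directly the defining identity of a right braided $V$-module (figure \ref{pic:BrMod}) for the morphism $\rho := {^\epsilon}\!\pi = \epsilon_1 \circ \ssigma_{V^{\otimes n},V}$, that is
$$\rho \circ (\rho \otimes \Id_V) = \rho \circ (\rho \otimes \Id_V) \circ (\Id_{V^{\otimes n}} \otimes \sigma): V^{\otimes n}\otimes V \otimes V \longrightarrow V^{\otimes n}.$$
The guiding idea is that iterating $\rho$ amounts to pulling both trailing copies of $V$ over the block $V^{\otimes n}$ (as in figure \ref{pic:NWArrow}) and then capping them with two $\epsilon$'s; the two sides of the identity will then differ only by a crossing of the two trailing strands, which the braided character property is designed to absorb.

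First I would establish the auxiliary identity
$$\rho \circ (\rho \otimes \Id_V) = (\epsilon \otimes \epsilon \otimes \Id_{V^{\otimes n}}) \circ \ssigma_{V^{\otimes n}, V \otimes V}.$$
This rests on the ``hexagon'' decomposition $\ssigma_{V^{\otimes n}, V\otimes V} = (\Id_V \otimes \ssigma_{V^{\otimes n}, V}) \circ (\ssigma_{V^{\otimes n}, V} \otimes \Id_V)$, which is a formal consequence of the definition of $\ssigma$ as the $B^+_{n+2}$-action of remark \ref{rmk:br_tensor}, together with the elementary fact that capping a strand with $\epsilon$ commutes past any braiding not involving that strand. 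Reading the right-hand side from the bottom, the factor $\ssigma_{V^{\otimes n}, V}\otimes\Id_V$ moves the $(n+1)$-st strand over the block, where the first $\epsilon$ caps it---this is precisely $\rho \otimes \Id_V$---after which the second factor and the remaining cap assemble into $\rho$.

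The heart of the matter is the sliding relation
$$\ssigma_{V^{\otimes n}, V \otimes V} \circ (\Id_{V^{\otimes n}} \otimes \sigma) = (\sigma \otimes \Id_{V^{\otimes n}}) \circ \ssigma_{V^{\otimes n}, V \otimes V}.$$
Graphically this says that the crossing of the two trailing strands may be slid from below the block $V^{\otimes n}$ to above it; each time it passes one of the $n$ strands of the block this is a single application of the YBE (Reidemeister move III, figure \ref{pic:YB}), so the relation holds after $n$ such moves. I expect this to be the main obstacle, but it is a pure braid computation that uses neither invertibility nor naturality of $\sigma$---only the YBE---in keeping with the ``local'' pre-braiding philosophy.

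Granting the sliding relation, the argument closes as follows:
\begin{align*}
\rho \circ (\rho \otimes \Id_V) \circ (\Id_{V^{\otimes n}} \otimes \sigma)
&= (\epsilon \otimes \epsilon \otimes \Id_{V^{\otimes n}}) \circ \ssigma_{V^{\otimes n}, V\otimes V} \circ (\Id_{V^{\otimes n}} \otimes \sigma)\\
&= (\epsilon \otimes \epsilon \otimes \Id_{V^{\otimes n}}) \circ (\sigma \otimes \Id_{V^{\otimes n}}) \circ \ssigma_{V^{\otimes n}, V\otimes V}\\
&= \bigl(((\epsilon \otimes \epsilon)\circ \sigma) \otimes \Id_{V^{\otimes n}}\bigr) \circ \ssigma_{V^{\otimes n}, V \otimes V}\\
&= (\epsilon \otimes \epsilon \otimes \Id_{V^{\otimes n}}) \circ \ssigma_{V^{\otimes n}, V\otimes V} = \rho \circ (\rho \otimes \Id_V),
\end{align*}
where the penultimate step uses the braided character identity $(\epsilon \otimes \epsilon)\circ \sigma = \epsilon \otimes \epsilon$ of definition \ref{def:cuts}. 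In the graphical formulation the whole proof collapses to a single picture: pull both trailing strands over the block, slide their $\sigma$-crossing up to the top, and annihilate it against the two $\epsilon$-caps.
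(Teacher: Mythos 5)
Your proposal is correct and follows essentially the same route as the paper's own proof: combine the two applications of ${^\epsilon}\!\pi$ into $(\epsilon\otimes\epsilon\otimes\Id_n)\circ\ssigma_{V^{\otimes n},V^{\otimes 2}}$, slide the trailing $\sigma$-crossing through the block by repeated applications of the YBE, and absorb it with the braided character identity. The only difference is presentational: you make explicit (as a hexagon decomposition and a sliding lemma) what the paper compresses into ``the definition of ${^\epsilon}\!\pi$ and repeated application of the YBE'' and its advice to draw the diagrams.
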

\begin{proof}
The definition of ${^\epsilon}\!\pi$ and repeated application of the YBE give
\begin{align*}
{^\epsilon}\!\pi \circ ({^\epsilon}\!\pi \otimes \Id_V) \circ (\Id_n \otimes \sigma) &= \\
(\epsilon \otimes \epsilon \otimes \Id_n)\circ \ssigma_{V^n,V^2} \circ (\Id_n \otimes \sigma) &= \\
((\epsilon \otimes \epsilon)\circ \sigma) \otimes \Id_n) \circ \ssigma_{V^n,V^2} & 
\end{align*}
which, by the definition of braided character, is the same as
$$(\epsilon \otimes \epsilon \otimes \Id_n) \circ \ssigma_{V^n,V^2} =
{^\epsilon}\!\pi \circ ({^\epsilon}\!\pi \otimes \Id_V). $$
The reader is advised to draw some diagrams to better follow the proof.
\end{proof}

\begin{definition}
We call the modules $(V^{\otimes n}, {^\epsilon}\!\pi) $ \emph{adjoint}.
\end{definition}

One recognizes the arrow operations ${^\epsilon}\!\pi_w$ from subsection \ref{sec:HomOper}. See that subsection for diagrams and some properties. In particular, along the lines of proposition \ref{thm:hom_operations}, one proves

\begin{proposition}\label{crl:AdjActOnComplexes}
The action ${^\epsilon}\!\pi$ on $T(V)$ intertwines the left braided differential ${^\xi}\!d$ for $\sigma$-compatible braided characters $\epsilon$ and $\xi.$ In other words, ${^\xi}\!d$ is a braided $V$-module morphism, for the adjoint braided $V$-module structure on $V^{\otimes n}.$
\end{proposition}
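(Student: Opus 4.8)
The plan is to verify directly the defining condition of a braided $V$-module morphism, namely
$${^\xi}\! d \circ {^\epsilon}\!\pi = {^\epsilon}\!\pi \circ ({^\xi}\! d \otimes \Id_V) : V^{\otimes n} \otimes V \longrightarrow V^{\otimes (n-1)},$$
where on the left ${^\epsilon}\!\pi$ is the adjoint action on $V^{\otimes n}$ and on the right it is the adjoint action on $V^{\otimes (n-1)}$, both built from the same braided character $\epsilon$ via Proposition \ref{thm:adjoint_module}. Restricting the last tensor factor to a fixed $w \in V$ and writing ${^\epsilon}\!\pi(\overline{v} \otimes w) = {^\epsilon}\!\pi_w(\overline{v})$, this displayed identity is exactly the family of equalities ${^\xi}\! d \circ {^\epsilon}\!\pi_w = {^\epsilon}\!\pi_w \circ {^\xi}\! d$ of point \ref{item:HomOperators} of Proposition \ref{thm:hom_operations}, now read in the monoidal category $\C$. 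So it suffices to reproduce that argument, which uses only the YBE and the $\sigma$-compatibility of $\epsilon$ and $\xi$ --- both available at the local categorical level, as guaranteed by Theorem \ref{thm:BraidedSimplHomCat}.

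First I would unfold both composites into braid diagrams (cf.\ Fig.\ \ref{pic:NWArrow} for the arrow operation and Fig.\ \ref{pic:Simpl} for the face maps). The map ${^\epsilon}\!\pi = \epsilon_1 \circ \ssigma_{V^{\otimes n}, V}$ drags the extra ($w$) strand across all $n$ strands to the leftmost position and caps it with $\epsilon$; the differential ${^\xi}\! d = \xi_1 \circ \Csh^{1,n-1}$ is a signed sum, indexed by $Sh_{1,n-1}$, of diagrams dragging one strand to the leftmost position and capping it with $\xi$. On the left-hand composite the $\epsilon$-cap is applied first and the $\xi$-extraction afterwards; on the right-hand composite the order is reversed.

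The key step is then a term-by-term comparison. For each choice of the strand extracted by $\xi$, the $w$-strand and the $\xi$-strand are both being pulled to the two leftmost positions, and by repeated application of the YBE (Reidemeister III, Fig.\ \ref{pic:YB}) the order in which the two strands are dragged leftward is immaterial, so long as they are not forced to cross one another; all such terms match on the two sides. The single place where the two orders genuinely differ is the term in which the $\xi$-strand and the $w$-strand become adjacent at the top, separated by one crossing $\sigma$ sitting between their two caps: there the two sides produce $(\xi \otimes \epsilon) \circ \sigma$ against $\epsilon \otimes \xi$ (together with the mirror pair), which coincide precisely by the $\sigma$-compatibility relations \eqref{eqn:compatCochar}. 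This is the exact analogue of step $(2)$ in the proof of Theorem \ref{thm:BraidedSimplHom} (Fig.\ \ref{pic:GraphProofSimpl}), where the character relation collapses the crossing between two capped strands.

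The main obstacle I anticipate is the sign and index bookkeeping: ${^\xi}\! d$ is built from the negative braiding $-\sigma$ inside $\Csh$, whereas ${^\epsilon}\!\pi$ uses the positive $\ssigma$, so one must check that the signs generated as the $w$-strand slides past the shuffle crossings agree on both sides before the compatibility relation is invoked. I expect this to be purely mechanical once the diagrams are drawn, since no naturality of the local pre-braiding is ever needed --- only the YBE and \eqref{eqn:compatCochar}. Consequently the verification is identical to the vector-space computation underlying Proposition \ref{thm:hom_operations} and transports \emph{verbatim} to the monoidal category $\C$.
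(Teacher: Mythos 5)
Your proposal is correct and takes essentially the same route as the paper: the paper's own proof consists precisely in recognizing the identity ${^\xi}\!d \circ {^\epsilon}\!\pi = {^\epsilon}\!\pi \circ ({^\xi}\!d \otimes \Id_V)$ as the arrow-operation commutation of point \ref{item:HomOperators} of proposition \ref{thm:hom_operations}, which is itself established, ``in the same way as the proof of theorem \ref{thm:BraidedSimplHom}'', by a term-by-term graphical argument using only the YBE and the $\sigma$-compatibility \eqref{eqn:compatCochar}. Your additional remarks --- that the signs $(-1)^{i-1}$ match on both sides before compatibility is invoked, and that the absence of any naturality requirement lets the computation transport verbatim to a preadditive monoidal category --- are exactly the points the paper leaves implicit.
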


The motivation for our term comes from examples, where one recognizes familiar actions on $T(V)$ (cf. table \ref{tab:BasicIngredients}).

\medskip
We have seen that a module over a pre-braided object is a generalization of a braided character. Observe that this generalization picks the right property for a generalized version of theorem \ref*{thm:BraidedSimplHomCat} (where we replace the braided $V$-module $\II$ by arbitrary braided modules) to hold:

\medskip
\textbf{Theorem \ref*{thm:BraidedSimplHomCat}$^\mathbf{coeffs}$.}
\textit{
Let $(\C,\otimes,\II)$ be a preadditive monoidal category, $(V,\sigma)$ a pre-braided object in $\C,$ and $(M,\rho)$ and $(N,\lambda)$ a right and a left braided $V$-modules respectively. Then 
\begin{align*}
({^\rho}\! d)_n &:= (\rho \otimes \Id_{n-1}\otimes \Id_N)\circ (\Id_M\otimes\Csh^{1,n-1} \otimes \Id_N),\\ (d^\lambda)_n &:=(-1)^{n-1} (\Id_M \otimes \Id_{n-1}\otimes \lambda)\circ (\Id_M\otimes\Csh^{n-1,1} \otimes \Id_N),
\end{align*}
define a bidegree $-1$ tensor bidifferential for $V$ with coefficient in $M$ and $N.$
}

\medskip
The complicated expression \emph{a bidegree $-1$ tensor bidifferential for $V$ with coefficient in $M$ and $N$} hides what one naturally expects: two families of morphisms $d_n, d'_n: M \otimes V^{n}\otimes N \rightarrow M \otimes V^{n-1}\otimes N$ satisfying \eqref{eqn:bidiff}.

Pictorially, $({^\rho}\! d)_n$  is for example a signed sum of terms of the form
\begin{center}
\begin{tikzpicture}[scale=0.4]
 \draw[olive,ultra thick] (0,0) -- (0,3);
 \draw (1,0) -- (1,1.3);
 \draw (1,1.7) -- (1,3);
 \draw (2,0) -- (2,0.8);
 \draw (2,1.2) -- (2,3);
 \draw (3,0) -- (3,0.3);
 \draw (3,0.7) -- (3,3);
 \draw (4,0) -- (0,2);
 \draw (5,0) -- (5,3);
 \draw[olive,ultra thick] (6,0) -- (6,3);
 \node at (2.8,0.7) [right]{$\sigma$};
 \node at (0.8,1.7) [right]{$\sigma$};
 \node at (1.8,1.2) [right]{$\sigma$};
 \node at (0,2) [left]{$\rho$};
 \fill[teal] (0,2) circle (0.2);
 \node at (0,0) [below] {$M$};
 \node at (6,0) [below] {$N$};
 \node at (3,0) [below] {$V^{\otimes n}$};
 \node at (7,0) [below] {.};
\end{tikzpicture}
   \captionof{figure}{Braided differentials with coefficients}\label{pic:BrDiffCoeffs}
\end{center}

The proof of this result is a direct generalization of that of theorem \ref{thm:BraidedSimplHomCat}. Moreover, all the remaining points of that theorem can be generalized to ``coefficient'' settings.

\begin{remark}\label{rmk:one-sided_coeffs}
Taking as $M$ or $N$ the unit object $\II$ with the zero module structure, one obtains a degree $-1$ tensor differential for $V$ with coefficient in the left braided $V$-module $N$ (resp. right braided $V$-module $M$) only.
\end{remark}

As usual, everything described here can be dualized, in any of the three senses described in subsections \ref{sec:co} and \ref{sec:r-l}.

\medskip

Adjoint modules also admit a version with coefficients:

\begin{proposition}\label{thm:adjoint_module_coeff}
Given a pre-braided object $(V, \sigma)$ and a right braided $V$-module $(M,\rho),$ the morphisms
$${^\rho}\!\pi:=\rho_1\circ (\Id_M \otimes \ssigma_{V^{n},V}): 
M \otimes V^{\otimes n}\otimes V\rightarrow M \otimes V^{\otimes n}$$
define a right braided $V$-module structure on $M \otimes V^{\otimes n},$ intertwining the left differential ${^\rho}\!d.$
\end{proposition}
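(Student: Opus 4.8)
The plan is to treat this as the ``coefficient'' upgrade of Proposition \ref{thm:adjoint_module}, with the right braided module $(M,\rho)$ playing the role that the braided character $\epsilon$ played there: one recovers the earlier statement by setting $M=\II$ and $\rho=\epsilon$, and the braided module axiom \eqref{eqn:BrMod} is precisely what replaces the defining identity $(\epsilon\otimes\epsilon)\circ\sigma=\epsilon\otimes\epsilon$ of a braided character. Two things have to be checked: first, that ${^\rho}\!\pi$ satisfies the braided module axiom \eqref{eqn:BrMod} on $M\otimes V^{\otimes n}$; and second, that it intertwines the left differential ${^\rho}\!d$.

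For the first point I would reproduce the three-line computation from the proof of Proposition \ref{thm:adjoint_module}. The central step is the identity
$${^\rho}\!\pi \circ ({^\rho}\!\pi \otimes \Id_V) = \bigl(\rho \circ (\rho \otimes \Id_V)\bigr) \otimes \Id_n \circ (\Id_M \otimes \ssigma_{V^{n},V^2}),$$
which says that applying the adjoint action twice amounts to dragging the two extra copies of $V$ past the block $V^{\otimes n}$ in one go and then letting them act through $\rho$ twice. This follows from repeated use of the YBE \eqref{eqn:YB}, packaged as the coherence of the extended braiding $\ssigma$ of remark \ref{rmk:br_tensor} (the decomposition $\ssigma_{V^n,V^2}=(\Id_V\otimes\ssigma_{V^n,V})\circ(\ssigma_{V^n,V}\otimes\Id_V)$), together with the bifunctoriality of $\otimes$, which lets the first $\rho$ be applied either before or after the second copy is dragged across. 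Precomposing both sides with $\Id_M\otimes\Id_n\otimes\sigma$ and sliding this last crossing to the front of the block via the braid relation
$$\ssigma_{V^{n},V^2} \circ (\Id_n \otimes \sigma) = (\sigma \otimes \Id_n) \circ \ssigma_{V^{n},V^2}$$
(tensored with $\Id_M$) converts the right-hand $\rho\circ(\rho\otimes\Id_V)$ into $\rho\circ(\rho\otimes\Id_V)\circ(\Id_M\otimes\sigma)$. The braided module axiom \eqref{eqn:BrMod} for $(M,\rho)$ now identifies the two expressions, which is exactly \eqref{eqn:BrMod} for $(M\otimes V^{\otimes n},{^\rho}\!\pi)$.

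For the intertwining I would argue as in point \ref*{item:HomOperators} of Proposition \ref{thm:hom_operations} and in Proposition \ref{crl:AdjActOnComplexes}, of which this is the coefficient version: one shows ${^\rho}\! d \circ {^\rho}\!\pi = {^\rho}\!\pi \circ ({^\rho}\! d \otimes \Id_V)$, i.e. that the strand carried across and absorbed by $\rho$ in ${^\rho}\!\pi$ commutes with the deconcatenation-plus-$\rho$ defining ${^\rho}\! d$. Graphically this is again a sequence of applications of the YBE allowing the ``arrow'' strand to pass the co-shuffle $\Csh^{1,n-1}$, the two $\rho$-vertices then combining by \eqref{eqn:BrMod}; here the self-compatibility that was required between distinct characters in Proposition \ref{thm:hom_operations} is automatic, being subsumed by the single module axiom. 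I expect the only real obstacle to be bookkeeping: tracking how the extended braidings $\ssigma$ decompose and where the signs sit in ${^\rho}\! d$, all of which is routine once the diagrams are drawn, since no algebraic input beyond \eqref{eqn:BrMod} and the YBE is needed.
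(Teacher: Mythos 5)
Your proof is correct and takes essentially the same route as the paper: the paper states this proposition without a separate argument precisely because it is the adaptation you describe, namely replaying the three-line $\ssigma_{V^n,V^2}$-computation from the proof of proposition \ref{thm:adjoint_module} with $(M,\rho)$ in place of $(\II,\epsilon)$ and the module axiom \eqref{eqn:BrMod} in place of the braided character identity, and then establishing the intertwining graphically from the YBE plus \eqref{eqn:BrMod}, exactly as in proposition \ref{thm:hom_operations} and proposition \ref{crl:AdjActOnComplexes}. Your observation that the $\sigma$-compatibility hypothesis needed there becomes automatic here (both the action and the differential use the same $\rho$, so \eqref{eqn:BrMod} subsumes it) is also the right reading of the statement.
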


In other words, ${^\rho}\!d$ is a braided $V$-module morphism.

\medskip
Having the Hochschild homology in mind, one also needs the notion of bimodules.

\begin{definition}
A \emph{bimodule} over a pre-braided object $(V, \sigma)$ is an object $M \in \Ob (\C)$ equipped with two morphisms $\rho:M\otimes V \rightarrow M$ and $\lambda:V\otimes M \rightarrow M,$ turning $M$ into a right and left modules respectively and satisfying the following compatibility condition:
$$\rho \circ (\lambda \otimes \Id_V)=\lambda \circ (\Id_V \otimes \rho):V\otimes M\otimes V\rightarrow M.$$
\end{definition}

Another interpretation of bimodules -- in terms of modules over appropriate pre-braided systems -- is given in \cite{Lebed}.

The bidifferential structure from theorem \ref*{thm:BraidedSimplHomCat}$^\mathbf{coeffs}$ can be nicely adapted to bimodules:

\begin{proposition}\label{thm:bimod}
 Let $(\C,\otimes,\II,c)$ be a symmetric preadditive category, $(V,\sigma)$ a pre-braided object in $\C,$ and $(M,\rho,\lambda)$ a bimodule over $V$. Then the families of morphisms 
\begin{align*}
({^\rho}\! d)_n &:= (\rho \otimes \Id_{n-1})\circ (Id_M\otimes\Csh^{1,n-1}),\\
(d^\lambda)_n &:=(-1)^{n-1}c_{M,V^{n-1}}^{-1}\circ (\Id_{n-1}\otimes \lambda)\circ (\Csh^{n-1,1} \otimes \Id_M)\circ c_{M,V^{n}},
\end{align*}
define a bidegree $-1$ tensor bidifferential for $V$ with coefficients in $M$ on the left.
\end{proposition}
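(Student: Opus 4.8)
The three defining relations \eqref{eqn:bidiff} of a bidifferential must be checked: $({^\rho}\! d)_{n-1}\circ({^\rho}\! d)_n = 0$, $(d^\lambda)_{n-1}\circ(d^\lambda)_n = 0$, and the anticommutation $(d^\lambda)_{n-1}\circ({^\rho}\! d)_n + ({^\rho}\! d)_{n-1}\circ(d^\lambda)_n = 0$. The first one comes for free: the family $({^\rho}\! d)_n = (\rho\otimes\Id_{n-1})\circ(\Id_M\otimes\Csh^{1,n-1})$ is exactly the left braided differential of Theorem~\ref{thm:BraidedSimplHomCat}$^{\mathbf{coeffs}}$ attached to the right $V$-module $(M,\rho)$ as left coefficient and to the trivial right coefficient $N=\II$ (cf. Remark~\ref{rmk:one-sided_coeffs}), hence it squares to zero by that theorem and its proof.

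For the second relation the plan is to absorb, by conjugation, the two symmetries built into $(d^\lambda)_n$. Setting $\Phi_n := c_{M,V^{n}} : M\otimes V^{\otimes n}\to V^{\otimes n}\otimes M$, a direct computation cancels the braidings in the definition,
\[
\Phi_{n-1}\circ (d^\lambda)_n \circ \Phi_n^{-1} = (-1)^{n-1}(\Id_{n-1}\otimes\lambda)\circ(\Csh^{n-1,1}\otimes\Id_M),
\]
and the right-hand side is precisely the right braided differential of Theorem~\ref{thm:BraidedSimplHomCat}$^{\mathbf{coeffs}}$ with trivial left coefficient and the left $V$-module $(M,\lambda)$ as the right coefficient. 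Thus this conjugate squares to zero, and since the $\Phi_n$ are isomorphisms this forces $(d^\lambda)_{n-1}\circ(d^\lambda)_n = 0$. (The computation also explains the otherwise opaque shape of $(d^\lambda)_n$: the braidings and the sign $(-1)^{n-1}$ are chosen precisely so that conjugation by $\Phi$ lands on the theorem's formula.)

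The anticommutation is the only genuinely new point, and I expect it to be the main obstacle, because the two differentials cannot be conjugated into theorem form simultaneously: $({^\rho}\! d)$ lives naturally with $M$ on the left while $(d^\lambda)$ wants $M$ braided to the right. I would reduce the cross-term to a local identity. Using the coassociativity of the quantum co-shuffle comultiplication, both composites factor through a single co-shuffle $V^{\otimes n}\to V\otimes V^{\otimes(n-2)}\otimes V$, with $\rho$ consuming the leftmost factor and $\lambda$ the rightmost one (the factor adjacent to the braided-over $M$); naturality of the symmetry $c$ then lets the $M$-strand slide freely past the bulk factors. After this reduction the two composites become the two sides of the bimodule compatibility $\rho\circ(\lambda\otimes\Id_V)=\lambda\circ(\Id_V\otimes\rho)$ on $V\otimes M\otimes V$, and the relative sign $-1$ (from $(-1)^{n-1}$ against $(-1)^{n-2}$) produces the cancellation.

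It is worth stressing where this departs from Theorem~\ref{thm:BraidedSimplHomCat}$^{\mathbf{coeffs}}$: there the anticommutation of the left and right differentials is pure co-shuffle combinatorics, because $\rho$ and $\lambda$ act on the two \emph{disjoint} coefficient objects and therefore automatically commute. Here both actions touch the single object $M$, so the combinatorial cancellation alone is insufficient and the bimodule compatibility is exactly the extra input that restores it. As throughout the paper, the cleanest way to carry out the reduction of the cross-term is by strand-chasing in the graphical calculus rather than by subscript manipulation.
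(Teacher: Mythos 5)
Your proposal is correct and follows essentially the same route as the paper: the paper also gets $({^\rho}\!d)^2=0$ directly from Theorem \ref{thm:BraidedSimplHomCat}$^{\mathbf{coeffs}}$, kills $(d^\lambda)^2$ by exactly your conjugation trick (writing $(d^\lambda)_n$ as $c_{M,V^{n-1}}^{-1}\circ(d'^\lambda)_n\circ c_{M,V^n}$ so the intermediate symmetries cancel), and proves the anticommutation by rewriting $(d^\lambda)_n$ so that the cross-terms reduce to the bimodule compatibility combined with the naturality of $c$ and the YBE for $\sigma$ — the same three ingredients your co-shuffle-coassociativity reduction uses, with the same sign cancellation $(-1)^{n-1}+(-1)^{n-2}=0$.
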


By definition, $(d^\lambda)_n$ is a signed sum of terms of the form
\begin{center}
\begin{tikzpicture}[scale=0.3]
 \draw[olive,ultra thick,rounded corners] (0,0) -- (0,0.5) -- (5,1) -- (5,4)--(0,4.5)--(0,5);
 \draw (1,0) -- (1,5);
 \draw (3,0) -- (3,5);
 \draw (4,0) -- (4,5);
 \draw [rounded corners] (2,0) -- (2,2) --(2.8,2.4);
 \draw (3.2,2.6) -- (3.8,2.9);
 \draw (4.2,3.1) -- (5,3.5);
 \node at (5,3.5) [right]{$\lambda$};
 \fill[teal] (5,3.5) circle (0.2);
 \node at (0,0) [below] {$M$};
 \node at (1,0) [below] {$V$};
 \node at (2,0) [below] {$V$};
 \node at (3,0) [below] {$V$};
 \node at (4,0) [below] {$V$};
 \node at (6,0) {.};
\end{tikzpicture}
   \captionof{figure}{Braided differentials with bimodule coefficients}\label{pic:BrDiffBimod}
\end{center}

\begin{proof}
Relations $({^\rho}\! d)_{n-1}\circ ({^\rho}\! d)_n =0$ and
\begin{align*}
(d^\lambda)_{n-1}\circ (d^\lambda)_n &= c_{M,V^{n-2}}^{-1} \circ (d'^\lambda)_{n-1}\circ c_{M,V^{n-1}}\circ c_{M,V^{n-1}}^{-1}\circ (d'^\lambda)_n  \circ c_{M,V^{n}}\\
& = c_{M,V^{n-2}}^{-1} \circ (d'^\lambda)_{n-1}\circ  (d'^\lambda)_n  \circ c_{M,V^{n}} =0,
\end{align*}
with $(d'^\lambda)_n:=(-1)^{n-1}(\Id_{n-1}\otimes \lambda)\circ (\Csh^{n-1,1} \otimes \Id_M),$ follow directly from the corresponding identities in theorem \ref*{thm:BraidedSimplHomCat}$^\mathbf{coeffs}$. To prove the compatibility between $({^\rho}\! d)_n$ and $(d^\lambda)_n,$ observe that
$$(d^\lambda)_n =(-1)^{n-1}((\lambda\circ c_{M,V}) \otimes\Id_{n-1}) \circ (\Id_M\otimes c_{V,V^{n-1}}^{-1})\circ (\Id_M\otimes\Csh^{n-1,1}),$$
then use the defining property of bimodule, the naturality of $c$ and the YBE for $\sigma.$
\end{proof}

\begin{remark}
We have kept the notation $c^{-1},$ redundant for a symmetric $c,$ to be able to treat the non symmetric situation. In this case, on the picture showing $(d^\lambda)_n,$ the thick line (corresponding to $M$) should go behind all normal lines, in order to distinguish $c$ from $c^{-1}.$ One should be careful to differentiate the two pre-braidings, $c$ and $\sigma,$ which is difficult to do pictorially. For the above theorem to be still valid, one should change the compatibility condition defining a bimodule to the following one, different from the old one in general:
$$\lambda \circ (\Id_V \otimes \rho)\circ c_{M\otimes V,V} =\rho \circ (\lambda \otimes \Id_V)\circ c_{M,V}\circ c_{V,V}^{-1}:M\otimes V\otimes V\rightarrow M,$$
\begin{center}
\begin{tikzpicture}[scale=0.4]
 \draw[olive,ultra thick] (0,0) -- (0,1.8);
 \draw[olive,ultra thick] (0,2.2) -- (0,4.5);
 \draw (1,0) -- (0,1);
 \draw [rounded corners] (2,0) -- (-0.5,2.5) --(0,3);
 \node at (0,3) [left]{$\lambda$};
 \fill[teal] (0,3) circle (0.2);
 \node at (0,1) [left]{$\rho$};
 \fill[teal] (0,1) circle (0.2);
 \node at (0,0) [below] {$M$};
 \node at (1,0) [below] {$V$};
 \node at (2,0) [below] {$V$};
 \node at (4,2) {$=$};
\end{tikzpicture}
\begin{tikzpicture}[scale=0.4]
 \node at (-2,2) {};
 \draw[olive,ultra thick] (0,0) -- (0,1.8);
 \draw[olive,ultra thick] (0,2.2) -- (0,4.5);
 \draw [rounded corners](1,0) -- (1,3)-- (0,4);
 \draw [rounded corners] (2,0) -- (1.2,0.8);
 \draw [rounded corners] (0.8,1.2) -- (-0.5,2.5) --(0,3);
 \node at (0,3) [left]{$\lambda$};
 \fill[teal] (0,3) circle (0.2);
 \node at (0,4) [left]{$\rho$};
 \fill[teal] (0,4) circle (0.2);
 \node at (0,0) [below] {$M$};
 \node at (1,0) [below] {$V$};
 \node at (2,0) [below] {$V$};
\end{tikzpicture}
  \captionof{figure}{Bimodules in the non symmetric case}\label{pic:BimodNonSymm}
\end{center}
All the crossings correspond to the braiding $c$ here.
\end{remark}

A more elegant solution for the non symmetric case would be welcome.

\medskip
We finish this section with examples, interpreting several classical homology theories with coefficients as braided ones:

\begin{example}\label{ex:BrModHom}
\begin{enumerate}
\item 
Taking a vector space $V$ with a simple flip as a braiding and, for instance, its symmetric algebra $S(V)$ as a module over $V$ (with the action coming from concatenation, as usual), one obtains more complicated versions of the Koszul complex.
\item
In the case of shelves, one recovers the \textbf{\textit{shelf and rack homologies with coefficients}}, hinted at in \cite{ChangNelson}. 
\item
For Leibniz algebras, our machinery gives the \textbf{\textit{Leibniz homology with coefficients}}, generalizing the Chevalley-Eilenberg homology (cf. \cite{Cyclic}). 

In these three cases one generally puts the coefficients only on the left (cf. remark \ref{rmk:one-sided_coeffs}).
 
\item
Coefficients on both sides turn out to be particularly useful for associative algebras in a symmetric preadditive category. In this setting, proposition \ref{thm:bimod} gives the following differential for an algebra bimodule $(M,\rho,\lambda)$:
\begin{align*}
({^\rho}\! d -d^\lambda)_n & := \rho_1+\sum_{i=1}^{n-1}(-1)^i \mu_{i+1} +(-1)^n \lambda_{1} \circ c_{M\otimes V^{n-1},V}\\
&+ \text{some terms involving  } \nu.
\end{align*}
For $\C = \Vect,$ one can get rid of the terms with $\nu$ as it was done in the proof of point \ref*{it:GroupHom} of proposition \ref{thm:ComputAss}, getting the \textbf{\textit{Hochschild differential}}. 

\item The co-version of the previous differential is the \textbf{\textit{Cartier differential}} for coalgebras (cf. \cite{Cartier}, where it was first introduced).
\end{enumerate}
\end{example}

\section*{Acknowledgements}
I would like to thank Marc Rosso for sharing his passion for quantum shuffles and for his patient encouragments, and J{\'o}zef Przytycki for introducing me to his work on distributive homology. I am also grateful to Paul-Andr\'e Melli\`es for his interest in my work, and to Arnaud Mortier for comments on a preliminary version of this paper. My deep gratitude goes to Jean-Louis Loday whom I have never had the chance to meet in person, but whose mathematics I have always admired.

\bibliographystyle{plain}
\bibliography{biblio}

\def\cprime{$'$} \def\cprime{$'$} \def\cprime{$'$} \def\cprime{$'$}
  \def\cprime{$'$} \def\cprime{$'$}
\begin{thebibliography}{10}

\bibitem{Baez}
John~C. Baez.
\newblock Hochschild homology in a braided tensor category.
\newblock {\em Trans. Amer. Math. Soc.}, 344(2):885--906, 1994.

\bibitem{CatSelfDistr}
J.~Scott Carter, Alissa~S. Crans, Mohamed Elhamdadi, and Masahico Saito.
\newblock Cohomology of categorical self-distributivity.
\newblock {\em J. Homotopy Relat. Struct.}, 3(1):13--63, 2008.

\bibitem{TwistedQuandle}
J.~Scott Carter, Mohamed Elhamdadi, and Masahico Saito.
\newblock Twisted quandle homology theory and cocycle knot invariants.
\newblock {\em Algebr. Geom. Topol.}, 2:95--135 (electronic), 2002.

\bibitem{HomologyYB}
J.~Scott Carter, Mohamed Elhamdadi, and Masahico Saito.
\newblock Homology theory for the set-theoretic {Y}ang-{B}axter equation and
  knot invariants from generalizations of quandles.
\newblock {\em Fund. Math.}, 184:31--54, 2004.

\bibitem{QuandleHom}
J.~Scott Carter, Daniel Jelsovsky, Seiichi Kamada, Laurel Langford, and
  Masahico Saito.
\newblock Quandle cohomology and state-sum invariants of knotted curves and
  surfaces.
\newblock {\em Trans. Amer. Math. Soc.}, 355(10):3947--3989, 2003.

\bibitem{Cartier}
Pierre Cartier.
\newblock Cohomologie des coalg\`ebres.
\newblock {\em S\'em. Sophus Lie}, 5, 1955-1956.

\bibitem{ChangNelson}
Wesley Chang and Sam Nelson.
\newblock Rack shadows and their invariants.
\newblock {\em J. Knot Theory Ramifications}, 20(9):1259--1269, 2011.

\bibitem{Crans}
Alissa~Susan Crans.
\newblock {\em Lie 2-algebras}.
\newblock ProQuest LLC, Ann Arbor, MI, 2004.
\newblock Thesis (Ph.D.)--University of California, Riverside.

\bibitem{Cuvier}
Christian Cuvier.
\newblock Homologie de {L}eibniz et homologie de {H}ochschild.
\newblock {\em C. R. Acad. Sci. Paris S\'er. I Math.}, 313(9):569--572, 1991.

\bibitem{Doi}
Yukio Doi.
\newblock Homological coalgebra.
\newblock {\em J. Math. Soc. Japan}, 33(1):31--50, 1981.

\bibitem{LeiColor}
A.~S. Dzhumadil{\cprime}daev.
\newblock Cohomologies of colour {L}eibniz algebras: pre-simplicial approach.
\newblock In {\em Lie theory and its applications in physics, {III}
  ({C}lausthal, 1999)}, pages 124--136. World Sci. Publ., River Edge, NJ, 2000.

\bibitem{Eisermann}
Michael Eisermann.
\newblock Yang-{B}axter deformations of quandles and racks.
\newblock {\em Algebr. Geom. Topol.}, 5:537--562 (electronic), 2005.

\bibitem{RackHom}
Roger Fenn, Colin Rourke, and Brian Sanderson.
\newblock Trunks and classifying spaces.
\newblock {\em Appl. Categ. Structures}, 3(4):321--356, 1995.

\bibitem{Goyvaerts}
I.~{Goyvaerts} and J.~{Vercruysse}.
\newblock {A Note on the categorification of Lie algebras}.
\newblock {\em ArXiv e-prints}, February 2012.

\bibitem{Joyce}
David Joyce.
\newblock A classifying invariant of knots, the knot quandle.
\newblock {\em J. Pure Appl. Algebra}, 23(1):37--65, 1982.

\bibitem{Kamada}
Seiichi Kamada.
\newblock Knot invariants derived from quandles and racks.
\newblock In {\em Invariants of knots and 3-manifolds ({K}yoto, 2001)},
  volume~4 of {\em Geom. Topol. Monogr.}, pages 103--117 (electronic). Geom.
  Topol. Publ., Coventry, 2002.

\bibitem{Kamada3}
Seiichi Kamada.
\newblock Quandles with good involutions, their homologies and knot invariants.
\newblock In {\em Intelligence of low dimensional topology 2006}, volume~40 of
  {\em Ser. Knots Everything}, pages 101--108. World Sci. Publ., Hackensack,
  NJ, 2007.

\bibitem{Braids}
Christian Kassel and Vladimir Turaev.
\newblock {\em Braid groups}, volume 247 of {\em Graduate Texts in
  Mathematics}.
\newblock Springer, New York, 2008.
\newblock With the graphical assistance of Olivier Dodane.

\bibitem{KauffmanVirtual}
Louis~H. Kauffman.
\newblock Virtual knot theory.
\newblock {\em European J. Combin.}, 20(7):663--690, 1999.

\bibitem{Lebed}
Victoria Lebed.
\newblock {\em Braided Objects: Unifying Algebraic Structures and Categorifying
  Virtual Braids}.
\newblock 2012.
\newblock Thesis (Ph.D.)--Universit{\'e} Paris 7.

\bibitem{Lebed3}
Victoria Lebed.
\newblock {Symmetric Categories as a Means of Virtualization for Braid Groups
  and Racks}.
\newblock {\em ArXiv e-prints}, June 2012.

\bibitem{LeiSuper}
Dong Liu and Naihong Hu.
\newblock Leibniz superalgebras and central extensions.
\newblock {\em J. Algebra Appl.}, 5(6):765--780, 2006.

\bibitem{Cyclic}
Jean-Louis Loday.
\newblock {\em Cyclic homology}, volume 301 of {\em Grundlehren der
  Mathematischen Wissenschaften [Fundamental Principles of Mathematical
  Sciences]}.
\newblock Springer-Verlag, Berlin, 1992.
\newblock Appendix E by Mar{\'{\i}}a O. Ronco.

\bibitem{LoLei}
Jean-Louis Loday.
\newblock Une version non commutative des alg\`ebres de {L}ie: les alg\`ebres
  de {L}eibniz.
\newblock {\em Enseign. Math. (2)}, 39(3-4):269--293, 1993.

\bibitem{LoPi}
Jean-Louis Loday and Teimuraz Pirashvili.
\newblock Universal enveloping algebras of {L}eibniz algebras and (co)homology.
\newblock {\em Math. Ann.}, 296(1):139--158, 1993.

\bibitem{Cat}
Saunders MacLane.
\newblock {\em Categories for the working mathematician}.
\newblock Springer-Verlag, New York, 1971.
\newblock Graduate Texts in Mathematics, Vol. 5.

\bibitem{BraidedDiff}
S.~Majid.
\newblock Free braided differential calculus, braided binomial theorem, and the
  braided exponential map.
\newblock {\em J. Math. Phys.}, 34(10):4843--4856, 1993.

\bibitem{MajidBraided}
Shahn Majid.
\newblock Algebras and {H}opf algebras in braided categories.
\newblock In {\em Advances in {H}opf algebras ({C}hicago, {IL}, 1992)}, volume
  158 of {\em Lecture Notes in Pure and Appl. Math.}, pages 55--105. Dekker,
  New York, 1994.

\bibitem{Majid}
Shahn Majid.
\newblock {\em Foundations of quantum group theory}.
\newblock Cambridge University Press, Cambridge, 1995.

\bibitem{Prz2}
M.~Niebrzydowski and J.~H. Przytycki.
\newblock Homology operations on homology of quandles.
\newblock {\em J. Algebra}, 324(7):1529--1548, 2010.

\bibitem{Prz1}
J{\'o}zef~H. Przytycki.
\newblock Distributivity versus associativity in the homology theory of
  algebraic structures.
\newblock {\em Demonstratio Math.}, 44(4):823--869, 2011.

\bibitem{PrzSikora}
J{\'o}zef~H. Przytycki and Adam~S. Sikora.
\newblock {Distributive products and their homology}.
\newblock {\em ArXiv e-prints}, May 2011.

\bibitem{LieColor}
V.~Rittenberg and D.~Wyler.
\newblock Generalized superalgebras.
\newblock {\em Nuclear Phys. B}, 139(3):189--202, 1978.

\bibitem{Rosso1Short}
Marc Rosso.
\newblock Groupes quantiques et alg\`ebres de battage quantiques.
\newblock {\em C. R. Acad. Sci. Paris S\'er. I Math.}, 320(2):145--148, 1995.

\bibitem{Rosso2}
Marc Rosso.
\newblock Integrals of vertex operators and quantum shuffles.
\newblock {\em Lett. Math. Phys.}, 41(2):161--168, 1997.

\bibitem{Rosso1}
Marc Rosso.
\newblock Quantum groups and quantum shuffles.
\newblock {\em Invent. Math.}, 133(2):399--416, 1998.

\bibitem{Invariants}
V.~G. Turaev.
\newblock {\em Quantum invariants of knots and 3-manifolds}, volume~18 of {\em
  de Gruyter Studies in Mathematics}.
\newblock Walter de Gruyter \& Co., Berlin, 1994.

\bibitem{Ver}
Vladimir~V. Vershinin.
\newblock On homology of virtual braids and {B}urau representation.
\newblock {\em J. Knot Theory Ramifications}, 10(5):795--812, 2001.
\newblock Knots in Hellas '98, Vol. 3 (Delphi).

\bibitem{Westrich}
Quinton Westrich.
\newblock Lie algebras in braided monoidal categories.
\newblock {\em Preprint at http://www.scribd.com/QuintonWestrich}, 2006.

\end{thebibliography}

\end{document}